   \def\sH{{\mathfrak H}}   
   \def\sK{{\mathfrak K}}   \def\sL{{\mathfrak L}}
\def\sM{{\mathfrak M}}   \def\sN{{\mathfrak N}}
\def\st{{\mathfrak t}}
\def\ss{{\mathfrak s}}
\def\bB{{\mathbf{B}}}
\def\bE{{E}}
      \def\dC{{\mathbb C}}
   \def\dN{{\mathbb N}}   
      \def\dR{{\mathbb R}}
\def\cG{{\EuScript G}}
   \def\cW{{\EuScript W}}
\def\wt#1{{{\widetilde #1} }}
\def\h#1{{{\hat #1} }}
\def\wh#1{{{\widehat #1} }}
\def\bm\chi{\mbox{\boldmath$\chi$}}
\def\RE{{\rm Re\,}}
\def\ker{{\rm ker\,}}
\def\ran{{\rm ran\,}}
\def\cran{{\rm \overline{ran}\,}}
\def\dom{{\rm dom\,}}
\def\mul{{\rm mul\,}}
\def\cdom{{\rm \overline{dom}\,}}
\def\col{{\rm col\,}}
\let\xker=\ker \def\ker{{\xker\,}}
\def\uphar{{\upharpoonright\,}}
\DeclareMathOperator{\hplus}{\, \widehat + \,}
\DeclareMathOperator{\hoplus}{\, \widehat \oplus \,}
\newtheorem{theorem}{Theorem}[section]
\newtheorem{proposition}[theorem]{Proposition}
\newtheorem{corollary}[theorem]{Corollary}
\newtheorem{lemma}[theorem]{Lemma}
\theoremstyle{definition}
\newtheorem{remark}[theorem]{Remark}
\newtheorem{definition}[theorem]{Definition}
\numberwithin{equation}{section}
\title[Selfadjoint extensions ]
{Selfadjoint extensions of relations whose domain and range are orthogonal} 
\author{S.~Hassi}
\author{J.-Ph.~Labrousse}
\author{H.S.V.~de~Snoo}
\address{Department of Mathematics and Statistics \\
University of Vaasa \\
P.O. Box 700, 65101 Vaasa \\
Finland}
\email{sha@uwasa.fi}
\address{63 Avenue Cap de Croix, 06100 Nice, France}
\email{labro@unice.fr}
\address{Bernoulli Institute for Mathematics, Computer Science and Artificial Intelligence \\
 University of Groningen \\
P.O. Box 407, 9700 AK Groningen \\
Nederland}
\email{hsvdesnoo@gmail.com}
\subjclass[2010]{Primary 47A06, 47B25; Secondary 47A12, 47B65}
\dedicatory{Dedicated to our friend Yury Arlinski\u{\i} on the occasion of his
seventieth birthday}
\keywords{Symmetric operator, nonnegative operator, linear relation, selfadjoint extension, extremal extension, numerical range, boundary triplet, Weyl function}
\begin{document}

\begin{abstract}
The selfadjoint extensions of a closed linear relation $R$ from
a Hilbert space $\sH_1$ to a Hilbert space $\sH_2$ are considered in the Hilbert space
$\sH_1\oplus\sH_2$ that contains the graph of $R$.
They will be described by $2 \times 2$ blocks of linear relations
and by means of boundary triplets associated with a closed symmetric
relation $S$ in $\sH_1 \oplus \sH_2$ that is induced by $R$.
Such a relation is characterized by the orthogonality property $\dom S \perp \ran S$
and it is nonnegative. All nonnegative selfadjoint extensions $A$,
in particular the Friedrichs and Kre\u{\i}n-von Neumann extensions, are parametrized via
an explicit block formula. In particular, it is shown that $A$ belongs to
the class of extremal extensions of $S$ if and only if $\dom A \perp \ran A$.
In addition, using asymptotic properties of an associated Weyl function,
it is shown that there is a natural correspondence between semibounded selfadjoint extensions of $S$
and semibounded parameters describing them if and only if the operator part of $R$ is bounded.
 \end{abstract}

\maketitle

\section{Introduction}

Let $R$ be a closed linear relation from  a Hilbert space $\sH_1$ to
a Hilbert space $\sH_2$. The problem considered here
is to construct selfadjoint relations
that extend the relation $R$ in the larger Hilbert space $\sH_1\oplus\sH_2$.
Then,
based on the case that $R$ is a densely defined closed operator,
one expects that the block of linear relations
\begin{equation}\label{symmkk}
 K=
 \begin{pmatrix} \sH_1 \times \{0\} & R^{*} \\
 R  & \sH_2 \times \{0\} \end{pmatrix},
\end{equation}
is such a selfadjoint relation.
Here the diagonal entries stand for the zero
operators on $\sH_1$ and $\sH_2$, respectively. Likewise,
\begin{equation}\label{symmhh}
H=
\begin{pmatrix} \sH_1 \times \{0\} & \{0\} \times \{0\} \\
\sH_1 \times \sH_2  & \{0\} \times \sH_2 \end{pmatrix},
\end{equation}
is also a selfadjoint relation that extends $R$.
The entry $\{0\} \times \sH_2$ in this matrix
is a purely multivalued relation in $\sH_2$. That these block relations are actually
selfadjoint extensions of $R$ is based on the idea that the
block representation of $R$, when considered in the larger space Hilbert space
$\sH_1 \oplus \sH_2$, given by
\begin{equation}\label{symm0}
 S=
 \begin{pmatrix} \sH_1 \times \{0\} & \{0\} \times \{0\} \\
 R  & \{0\} \times \{0\} \end{pmatrix},
\end{equation}
defines a closed symmetric relation in $\sH_1 \oplus \sH_2$,
and that the block representation of its adjoint is then given by
\begin{equation}\label{symm0*}
S^{*}=
\begin{pmatrix} \sH_1 \times \{0\} & R^{*} \\
\sH_1 \times \sH_2  & \sH_2 \times \sH_2 \end{pmatrix}.
\end{equation}
The above observations are completely formal and need to be justified,
i.e., one needs to develop a calculus for $2 \times 2$ blocks of linear relations;
see Remark \ref{mulmat} and the text above it.

It is not difficult to see that the interpretation of the
symmetric relation $S$ in \eqref{symm0} leads to the following graph representation
\begin{equation}\label{symm}
S
=\left\{ \,\left\{  \begin{pmatrix} f_{1} \\ 0\end{pmatrix},
\begin{pmatrix} 0  \\ g_{2} \end{pmatrix} \right\} :
\, \{f_{1}, g_{2}\} \in R\, \right\}.
\end{equation}
It is clear that $S$ has the property $\dom S \perp \ran S$ and one can show that, in fact,
every relation with this property is of the form \eqref{symm}. The adjoint of $S$ is
given by
\begin{equation}\label{symm*}
 S^*
 =\left\{\, \left\{\begin{pmatrix} h_{1}  \\ h_{2} \end{pmatrix},
 \begin{pmatrix} k_{1}  \\ k_{2} \end{pmatrix} \right\} :
 \, h_{1} \in \sH_{1}, \, \{h_{2}, k_{1}\} \in R^*, \,k_{2} \in \sH_{2} \,  \right\};
\end{equation}
cf. \eqref{symm0*}.
By choosing an appropriate boundary triplet $\{\cG, \Gamma_0, \Gamma_1\}$
all selfadjoint extensions $A_\Theta$
of $S$ in $\sH$ can be parametrized by selfadjoint relations $\Theta$
in the parameter space $\cG$,
via
\[
A_\Theta=\ker ( \Gamma_1 -\Theta \Gamma_0).
\]
The selfadjoint extensions in  \eqref{symmkk} and \eqref{symmhh} correspond
to the parameter being the zero operator and the purely multivalued relation,
respectively.  In particular,  the Friedrichs extension $S_F$ and the
Kre\u{\i}n-von Neumann extension $S_K$  of $S$ will be determined.
In general they are not transversal with respect to $S$,
but they are transversal with respect to $S_F \cap S_K$.
This leads to a new boundary triplet by means of which the nonnegative
extensions are parametrized by nonnegative relations.
On the other hand, by introducing a symmetric extension of $S$ or, loosely speaking,
by making the parameter space smaller in an appropriated manner, it will be shown,
that depending on whether the operator part $R_{\rm s}$ of $R$ is bounded or not,
there is a correspondence between semibounded selfadjoint parameters $\Theta$ and
semibounded selfadjoint extensions $A_\Theta$, or not, respectively.

Here is an overview of the contents of the paper.
The notion of a linear block relation is introduced in Section \ref{tweee}.
This short treatment is all that is needed in this paper.
Section \ref{orth} contains a treatment of linear relations
whose domain and range are orthogonal.
In Section \ref{BT5} all selfadjoint extensions of $S$ are described
by means of an appropriate boundary triplet for $S^*$.
A brief intermezzo about nonnegative selfadjoint extensions is given in
Section \ref{mezzo}.
The Friedrichs and Kre\u{\i}n-von Neumann extensions
and related boundary triplets are studied in Section \ref{Fried}; see Proposition \ref{BTS0}.
A simple description of all nonnegative selfadjoint extensions of $S$ is given in Theorem \ref{BTS0op}
and there is a characterization of all extremal extensions of $S$ in Corollary \ref{extremal}.
The semibounded extensions of a certain symmetric extension of $S$
are studied in Section \ref{special}
by means of the asymptotic behaviour of an associated Weyl function.
This leads to the alternative mentioned above; see Theorem \ref{propsemibd}.

Blocks of linear relations are built on the treatment of columns and rows
of linear relations in \cite{HSSW}.
 For a related general treatment of blocks of linear operators, see \cite{MolSz};
see also \cite{Tr}. A characterization of linear relations as block relations
will be given later elsewhere; cf. \cite{La2003}.
Note that in the operator case the block in \eqref{symm} was mentioned
by Coddington in \cite{Co3} in connection with a paper of Hestenes \cite{He},
who considered selfadjoint operator extensions of arbitrary closed linear operators.
For more information in this case, see \cite{OS}.
The introduction of the corresponding symmetric relation in \eqref{symm},
with $R$ being a linear relation, goes back to \cite{Co3}.
The present paper may be seen as a special case of a general completion problem,
namely to complete the following block  of relations
\[
  \begin{pmatrix} * & * \\
 R  & *  \end{pmatrix},
\]
to a nonnegative selfadjoint relation in the Hilbert space $\sH=\sH_1 \oplus \sH_2$;
cf. \cite{Ha}.

\section {Linear relations with a block structure} \label{tweee}

Before formally introducing blocks of linear relations, here is a brief review
of the notions of column and row for pairs of linear relations; cf. \cite{HSSW}.
Let $\sH$, $\sK$, $\sH_i$, and $\sK_i$, $i=1,2$,  be Hilbert spaces.
Let $A$ be a linear relation from $\sH$ to $\sK_1$
and let $B$ be a linear relation from $\sH$ to $\sK_2$.
Then the \textit{column} $\col (A\,;\,B)$ of $A$ and $B$ as a relation from
$\sH$ to $\sK_1 \oplus \sK_2$ is defined by
\begin{equation}\label{col}
 \begin{pmatrix} A \\ B \end{pmatrix}
 = \left\{ \left\{ h, \begin{pmatrix} k_1 \\k_2 \end{pmatrix} \right\} :\,
 \{h,k_1\} \in A, \,\, \{h,k_2\} \in B \,\right\}.
\end{equation}
Observe that
\[
\begin{split}
&\dom \col(A\,;\,B)=\dom A \cap \dom B,\\
&\ker \col(A\,;\,B)=\ker A \cap \ker B, \\
&\ran \col(A\,;\,B)=\{ k_1 \oplus k_2:\, \{h,k_1\} \in A, \,\{h,k_2\} \in B\}, \\
& \mul \col(A\,;\,B)=\mul A \times \mul B.
\end{split}
\]
 The column of $A$ and $B$ resembles a sum of linear relations once
the range spaces of $A$ and $B$ are combined in the above way.
Moreover, if $A'$ is a linear relation from $\sH$ to $\sK_1$
and  $B'$ is a linear relation from $\sH$ to $\sK_2$,
such that $A \subset A'$ and $B \subset B'$, then by \eqref{col},
it is clear that the extensions are preserved in the sense of the column
\begin{equation}\label{colext}
 \begin{pmatrix} A \\ B \end{pmatrix} \subset \begin{pmatrix} A' \\ B' \end{pmatrix}.
\end{equation}
Next let $C$ be a linear relation from $\sH_1$ to $\sK$
and let $D$ be a linear relation from $\sH_2$ to $\sK$.
Then the \textit{row} $(C;D)$ of $C$ and $D$ as a relation from
$\sH_1 \oplus \sH_2$ to $\sH$ is defined by
\begin{equation}\label{row}
  (C\,; D)
 = \left\{ \left\{ \begin{pmatrix} h_1 \\h_2 \end{pmatrix}, k_1+k_2 \right\} :\,
 \{h_1,k_1\} \in C, \,\, \{h_2,k_2\} \in D \,\right\}.
\end{equation}
The row of $C$ and $D$ resembles a componentwise sum of linear relations
once the domain spaces of $C$ and $D$ are combined in the above way.
Observe that
\[
\begin{split}
&\dom (C \,;\, D)=\dom C \times \dom D,\\
&\ker (C\,;\,D)=\{ h_1 \oplus h_2:\, \{h,k_1\} \in C, \,\{h_2,-k_1\} \in D\}, \\
&\ran (C\,;\,D)=\ran C + \ran D, \\
&\mul (C\,;\,D)=\mul C + \mul D.
\end{split}
\]
 The following proposition goes back to \cite{HSSW},
where one can also find a simple proof. It may be helpful to mention
that the definition of an adjoint relation depends on the Hilbert spaces in which
the original relation is considered. Thus in each of the following statements
one should make sure what Hilbert spaces are involved.

\begin{proposition}\label{rowcol}
Let the relations $A$, $B$, $C$, and $D$ as above. Then the following statements hold.
\begin{enumerate} \def\labelenumi{\rm(\roman{enumi})}
\item The column of $A$ and $B$ satisfies
\[
  \begin{pmatrix} A \\ B \end{pmatrix}^* \supset  (A^* \,;\, B^*).
\]
\item The row of $C$ and $D$ satisfies
\[
   (C\,;\,D)^* =\begin{pmatrix} C^* \\ D^* \end{pmatrix}.
\]
\item
If $B$ is bounded and densely defined with $\dom A \subset \dom B$, there is
equality in {\rm (i)}.
\end{enumerate}
\end{proposition}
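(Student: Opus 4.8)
The plan is to prove the reverse inclusion $\col(A;B)^* \subset (A^*;B^*)$; combined with (i) this gives the asserted equality. First I would write out what membership in the column's adjoint means: using the inner product on $\sK_1 \oplus \sK_2$, a pair $\{(k_1,k_2),h\}$ belongs to $\col(A;B)^*$ exactly when
\[
\langle g_1,k_1\rangle + \langle g_2,k_2\rangle = \langle f,h\rangle
\]
for every $f$ and every pair $g_1,g_2$ with $\{f,g_1\}\in A$ and $\{f,g_2\}\in B$.

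The decisive step is to separate the $B$-contribution from the $A$-contribution, and this is where boundedness enters. Since $B$ is a bounded, densely defined operator, its adjoint $B^*$ is bounded and defined on all of $\sK_2$. Hence for the given $k_2$ I may set $h_2:=B^*k_2$, so that $\{k_2,h_2\}\in B^*$ automatically. Writing $h_1:=h-h_2$, the goal reduces to showing $\{k_1,h_1\}\in A^*$: this produces the decomposition $h=h_1+h_2$ with $\{k_1,h_1\}\in A^*$ and $\{k_2,h_2\}\in B^*$, which is precisely membership of $\{(k_1,k_2),h\}$ in the row $(A^*;B^*)$.

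To verify $\{k_1,h_1\}\in A^*$ I would take an arbitrary $\{f,g_1\}\in A$. Here the hypothesis $\dom A\subset\dom B$ is essential: it guarantees $f\in\dom B$, so that $g_2:=Bf$ exists and $\{f,(g_1,g_2)\}$ is a genuine element of $\col(A;B)$ to which the adjoint identity above applies. Replacing $\langle g_2,k_2\rangle=\langle Bf,k_2\rangle$ by $\langle f,B^*k_2\rangle=\langle f,h_2\rangle$ through the defining property of $B^*$, I obtain $\langle g_1,k_1\rangle=\langle f,h-h_2\rangle=\langle f,h_1\rangle$ for every $\{f,g_1\}\in A$, which is exactly the condition $\{k_1,h_1\}\in A^*$.

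I expect the only real subtlety to be bookkeeping at the level of relations rather than operators: one must be sure that boundedness together with density of $\dom B$ truly forces $\dom B^*=\sK_2$ (so that $h_2$ is defined for an arbitrary $k_2$), and that the column's adjoint condition may legitimately be tested against all of $A$ by supplementing each $\{f,g_1\}\in A$ with the matching $\{f,Bf\}\in B$ --- which is exactly what $\dom A\subset\dom B$ supplies. Multivaluedness of $A$ causes no trouble, since the identity is imposed for all $\{f,g_1\}\in A$ at once; and boundedness makes $B$ single-valued, so $B^*k_2$ is an unambiguous element.
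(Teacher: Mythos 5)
Your argument for part (iii) is sound, and it is the right mechanism: for a bounded, densely defined operator $B$ the adjoint $B^*$ is an everywhere defined bounded operator, so $h_2:=B^*k_2$ exists for \emph{every} $k_2\in\sK_2$; the hypothesis $\dom A\subset\dom B$ lets you supplement an arbitrary $\{f,g_1\}\in A$ with $\{f,Bf\}\in B$ to get a genuine element of the column; and the identity $(Bf,k_2)=(f,B^*k_2)$ then isolates exactly the condition $\{k_1,h-h_2\}\in A^*$. For comparison: the paper itself gives no proof of this proposition (it refers to \cite{HSSW} for a simple one), so your write-up is being measured against what a direct verification should contain, and for (iii) it contains the right steps.

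The gap is one of completeness. The proposition makes three assertions, and you prove only (iii), while explicitly invoking (i) — which you never establish — to pass from your reverse inclusion to the claimed equality; (ii) is not addressed at all. Both missing parts are routine and should be recorded. For (i): if $\{k_1,h_1\}\in A^*$ and $\{k_2,h_2\}\in B^*$, then for every $\{f,(g_1,g_2)\}$ in the column one has $(h_1+h_2,f)=(k_1,g_1)+(k_2,g_2)$, which is precisely the statement that $\{(k_1,k_2),h_1+h_2\}$ lies in $\col(A\,;\,B)^*$. For (ii): the inclusion $\supset$ is the same computation, and for $\subset$ one tests the defining identity of $(C\,;\,D)^*$ against row elements of the form $\{(h_1,0),k_1\}$ and $\{(0,h_2),k_2\}$ — legitimate because $\{0,0\}$ belongs to every linear relation — which decouples the two components and yields $\{l,m_1\}\in C^*$ and $\{l,m_2\}\in D^*$ separately. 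It is worth noting that this decoupling device is exactly what is unavailable for the column, whose elements require a common first entry $f$; that is why (i) is in general only an inclusion, and why the extra hypotheses of (iii) (which let you manufacture the missing $B$-component via $Bf$) are needed to restore equality.
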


\begin{remark}
It follows directly from (iii) with $B=\sH \times \{0\}$, that
\[
 \begin{pmatrix} A \\  \sH \times \{0\} \end{pmatrix}^*
=(A^* \,;\, \sH \times \{0\}).
\]
There are more situations when equality prevails in (i).
For instance, if $\sM$ is a linear subspace in $\sK_2$,
and $B=\sH \times \sM$ one sees by
a direct argument that
\begin{equation}\label{rowcol+1}
 \begin{pmatrix} A \\  \sH \times \sM \end{pmatrix}^*
 =(A^*\,;\, \sM^\perp \times \{0\} ).
 \end{equation}
Recall that the domain of  $\col (A;B)$ is given
by $\dom A \cap \dom B$. Hence, if $\sM$ is a linear subspace
in $\sK_2$ and $B=\{0\} \times \sM$, then it follows that
\[
 \begin{pmatrix} A \\  \{0\} \times \sM \end{pmatrix}
 =\begin{pmatrix} \{0\} \times \mul A \\  \{0\} \times \sM \end{pmatrix}.
\]
A direct argument then shows that
\begin{equation*}
 \begin{pmatrix} A \\  \{0\} \times \sM \end{pmatrix}^*
=( \cdom A^* \times \sH \,;\,  \sM^\perp \times \sH )
\supset (  A^* \,;\, \sM^\perp \times \sH ),
\end{equation*}
with equality if and only if $\cdom A^* \times \sH =A^*$.
Thus, in general, there is no equality in (i).
For later use, observe that
\begin{equation}\label{rowcol+2}
 \begin{pmatrix} \{0\} \times \mul A \\  \{0\} \times \sM \end{pmatrix}^*
=( \cdom A^* \times \sH \,;\, \sM^\perp \times \sH ).
\end{equation}
\end{remark}

Now let the Hilbert space $\sH$ be decomposed into
two orthogonal components
$\sH_{1}$ and $\sH_{2}$
that are closed linear subspaces of $\sH=\sH_{1} \oplus \sH_{2}$.
Let
\[
 E_{ij} : \sH_{j} \to \sH_{i}, \quad i,j=1,2,
\]
be linear relations; they form a $2 \times 2$  \textit{block} of relations
$[E_{ij}]=[E_{ij}]_{i,j=1}^{2}$:
\begin{equation}\label{blok}
 [E_{ij}]
 =\begin{pmatrix} E_{11} & E_{12} \\
 E_{21} & E_{22} \end{pmatrix}.
\end{equation}
 Every block of relations  gives rise to a linear block relation in $\sH$.

\begin{definition}\label{matri}
Let $[E_{ij}]$ be a block as in \eqref{blok}.
Then the linear relation $E$ in $\sH$
\textit{generated} by the block
is defined as the row of its columns:
\begin{equation}\label{matrixnew}
 E=\begin{pmatrix}  \begin{pmatrix} E_{11} \\ E_{21} \end{pmatrix} ;
                                \begin{pmatrix} E_{12} \\ E_{22} \end{pmatrix}
     \end{pmatrix}.
\end{equation}
The relation $E$ is called the \textit{block relation} corresponding to the
block $[E_{ij}]$.
\end{definition}

Forming the row of the two columns in \eqref{matrixnew}
by means of \eqref{row} gives
\begin{equation}\label{matrix}
E=
\left\{
\left\{ \begin{pmatrix} f_{1} \\ f_{2}\end{pmatrix},
\begin{pmatrix} \alpha_{1} + \beta_{1} \\
\alpha_{2} + \beta_{2} \end{pmatrix} \right\} :\,
\begin{matrix}   \{f_{1}, \alpha_{1}\} \in E_{11}, \,\{f_{2}, \beta_{1}\} \in E_{12} \\
\{f_{1}, \alpha_{2}\} \in E_{21}, \,\{f_{2}, \beta_{2}\} \in E_{22} \end{matrix}
\right\},
\end{equation}
which is the natural way to think of the block relation $E$.
Observe that in the case where all of the relations $E_{ij}$
are everywhere defined bounded linear operators,
the block relation $E$ in \eqref{matrixnew} is the usual block operator.
It easily follows from the representation \eqref{matrix} of $E$ that
\[
 \dom \bE=( \dom E_{11} \cap \dom E_{21}) \oplus
 (\dom E_{12} \cap \dom E_{22} ),
\]
and that
\[
 \mul \bE= ( \mul E_{11} +\mul E_{12} ) \oplus
 (\mul E_{21} +\mul E_{22}  ).
\]
These two properties distinguish linear block relations among all relations in $\sH$.

In Definition \ref{matri} of a block relation one takes the row of two columns in
the block \eqref{blok}.
In the next lemma it is shown that one obtains the same block relation
when taking the column of the two rows in the block \eqref{blok}.

\begin{lemma}\label{matmat}
 Let $[E_{ij}]$ be a block as in \eqref{blok}. Then
\[
\begin{pmatrix}  ( E_{11} \,;\, E_{12} ) \\  (E_{21} \,;\, E_{22}) \end{pmatrix}
=
\begin{pmatrix}
 \begin{pmatrix} E_{11} \\ E_{21} \end{pmatrix}  ;
 \begin{pmatrix} E_{12} \\ E_{22} \end{pmatrix}
\end{pmatrix}.
\]
\end{lemma}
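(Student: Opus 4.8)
The plan is to verify the identity by unravelling the definitions of row \eqref{row} and column \eqref{col} on both sides and observing that each reduces to the explicit description \eqref{matrix} of the block relation $E$. Since the right-hand side is, by Definition \ref{matri}, precisely the block relation $E$, which has already been shown to equal the set in \eqref{matrix}, it suffices to compute the left-hand side and check that it coincides with that same set.

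First I would expand the two rows appearing on the left. By \eqref{row}, the row $(E_{11}\,;\,E_{12})$ is a relation from $\sH=\sH_{1}\oplus\sH_{2}$ to $\sH_{1}$, consisting of the pairs $\left\{\begin{pmatrix} f_{1} \\ f_{2}\end{pmatrix},\,\alpha_{1}+\beta_{1}\right\}$ with $\{f_{1},\alpha_{1}\}\in E_{11}$ and $\{f_{2},\beta_{1}\}\in E_{12}$; likewise $(E_{21}\,;\,E_{22})$ is a relation from $\sH$ to $\sH_{2}$, consisting of the pairs $\left\{\begin{pmatrix} f_{1} \\ f_{2}\end{pmatrix},\,\alpha_{2}+\beta_{2}\right\}$ with $\{f_{1},\alpha_{2}\}\in E_{21}$ and $\{f_{2},\beta_{2}\}\in E_{22}$. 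Next I would form the column of these two rows via \eqref{col}. Both rows share the common domain space $\sH$, so their column pairs an element $\begin{pmatrix} f_{1} \\ f_{2}\end{pmatrix}\in\sH$ with the stacked vector $\begin{pmatrix} k_{1} \\ k_{2}\end{pmatrix}\in\sH_{1}\oplus\sH_{2}$, where the first component comes from the first row and the second from the second. Substituting $k_{1}=\alpha_{1}+\beta_{1}$ and $k_{2}=\alpha_{2}+\beta_{2}$ reproduces exactly the set in \eqref{matrix}, which establishes the claimed equality.

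The argument is pure bookkeeping, and I do not expect a genuine obstacle. The one point demanding care is to keep track of which Hilbert space each entry $E_{ij}$ maps between, so that the row sums are formed in the correct target component ($\sH_{1}$ for the top row, $\sH_{2}$ for the bottom row) and the subsequent column stacking reassembles these targets into $\sH_{1}\oplus\sH_{2}$. The equality then holds because both orders of assembly are governed by the very same four independent selection conditions $\{f_{1},\alpha_{1}\}\in E_{11}$, $\{f_{2},\beta_{1}\}\in E_{12}$, $\{f_{1},\alpha_{2}\}\in E_{21}$, and $\{f_{2},\beta_{2}\}\in E_{22}$, and merely differ in the order in which one groups the summation over each $f_{j}$ and the stacking over each index $i$; since both operations are set-builder constructions, their order is immaterial.
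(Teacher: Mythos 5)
Your proof is correct and follows essentially the same route as the paper's: both unwind the left-hand side via the definitions \eqref{col} and \eqref{row} and identify the result with the explicit set \eqref{matrix}, which is the right-hand side by Definition \ref{matri}. The key observation---that the four membership conditions involve disjoint choice variables, so the two existential selections merge into one---is exactly what the paper's proof uses as well.
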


\begin{proof}
The definition of a column in \eqref{col} shows that
\[
 \begin{pmatrix}  ( E_{11} \,;\, E_{12} ) \\
  (E_{21} \,;\, E_{22}) \end{pmatrix}
 = \left\{
\left\{ f,
\begin{pmatrix} \gamma_{1}  \\ \gamma_{2} \end{pmatrix} \right\} :\,
\begin{matrix}   \{f, \gamma_{1}\} \in (E_{11}\,;\, E_{12})  \\
\{f, \gamma_{2}\} \in (E_{21} \,;\,E_{22} ) \end{matrix} \right\}.
\]
Recall that by the definition of a row in \eqref{row} one has
$\{f, \gamma_{1}\} \in (E_{11} \,;\, E_{12})$ if and only if
\[
\{f, \gamma_1\} =\left\{ \begin{pmatrix} f_1 \\ f_2 \end{pmatrix},  \alpha_1+ \beta_1 \right\}
\quad \mbox{with} \quad
\{f_1, \alpha_1\} \in E_{11} \quad \mbox{and} \quad \{f_2, \beta_1 \} \in E_{12},
\]
and, similarly, $\{f, \gamma_{2}\} \in (E_{21} \,;\, E_{22})$ if and only if
\[
\{f, \gamma_2\} =\left\{ \begin{pmatrix} f_1 \\ f_2 \end{pmatrix},  \alpha_2+ \beta_2 \right\}
\quad \mbox{with} \quad
\{f_1, \alpha_2\} \in E_{21} \quad \mbox{and} \quad \{f_2, \beta_2 \} \in E_{22}.
\]
Combining these facts, one sees that $\{f, \gamma_{1}\} \in (E_{11}\,;\,E_{12})$
and $\{f, \gamma_{2}\} \in (E_{21}\,;\, E_{22})$ if and only if
\[
\left\{ f,  \begin{pmatrix} \gamma_{1}  \\ \gamma_{2} \end{pmatrix} \right\}
=\left\{ \begin{pmatrix} f_1 \\ f_2 \end{pmatrix},
\begin{pmatrix} \alpha_1+ \beta_1 \\ \alpha_2 +\beta_2 \end{pmatrix} \right\}
\quad \mbox{with} \quad
\begin{matrix} \{f_1, \alpha_1\} \in E_{11}, \, \{f_2, \beta_1 \} \in E_{12}, \\
\{f_1, \alpha_2\} \in E_{21}, \, \{f_2, \beta_2 \} \in E_{22}. \end{matrix}
\]
This shows the identity thanks to \eqref{matrix}.
\end{proof}

Let $[E_{ij}],  [F_{ij}]$
be blocks of the form \eqref{blok}
and let $E$ and $F$ be the linear block relations in $\sH$ generated by them.
The blocks are said to satisfy the \textit{inclusion}
$[E_{ij}] \subset [F_{ij}] $ if $E_{ij} \subset F_{ij}$ for all $i,j$.
It follows from \eqref{matrix} that
\[
[E_{ij}] \subset [F_{ij}]  \quad \Rightarrow \quad E \subset F.
\]
Likewise,
let $[E_{ij}]$  be a block of the form \eqref{blok}.
Then the $2 \times 2$ block $[E_{ij}]^*$
of the adjoint relations (formal adjoint) is defined by
\[
 [E_{ij}]^*=\begin{pmatrix} E_{11}^* & E_{21}^* \\
 E_{12}^* & E_{22}^* \end{pmatrix},
\]
where $E_{ij}^*$ is a closed linear relation from  $\sH_{i}$
to $\sH_{j}$, $i,j=1,2$.
Thus one sees that also $[E_{ij}]^*$  is a block of the form \eqref{blok}.
In general, there is the following inclusion result.

\begin{proposition}
 Let $[E_{ij}]$ be a block as in \eqref{blok}. Then
\begin{equation}\label{adjstar+}
\begin{pmatrix}
  \begin{pmatrix} E_{11}^* \\ E_{12}^* \end{pmatrix} ;
  \begin{pmatrix} E_{21}^* \\ E_{22}^* \end{pmatrix}
\end{pmatrix}
\subset
\begin{pmatrix}
\begin{pmatrix} E_{11} \\ E_{21} \end{pmatrix}  ;
\begin{pmatrix} E_{12} \\ E_{22} \end{pmatrix}
\end{pmatrix}^*.
\end{equation}
\end{proposition}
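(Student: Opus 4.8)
The plan is to reduce the claim entirely to the row/column calculus of Proposition \ref{rowcol} together with the row--column interchange of Lemma \ref{matmat}, while keeping careful track of the Hilbert spaces in which each adjoint is formed. I would first name the two columns of the block,
\[
 C_1 = \begin{pmatrix} E_{11} \\ E_{21} \end{pmatrix}, \qquad
 C_2 = \begin{pmatrix} E_{12} \\ E_{22} \end{pmatrix},
\]
so that $C_1$ is a relation from $\sH_1$ to $\sH$ and $C_2$ is a relation from $\sH_2$ to $\sH$. By Definition \ref{matri} the right-hand side of \eqref{adjstar+} is then $E^*$, the adjoint of the block relation $E = (C_1 \,;\, C_2)$, while the left-hand side is, again by Definition \ref{matri}, precisely the block relation generated by the formal adjoint $[E_{ij}]^*$.

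Next I would compute $E^*$ in three steps. Applying Proposition \ref{rowcol}(ii) to the row $(C_1\,;\,C_2)$ gives the column of adjoints
\[
 E^* = (C_1\,;\,C_2)^* = \begin{pmatrix} C_1^* \\ C_2^* \end{pmatrix},
\]
where $C_1^*$ runs from $\sH$ to $\sH_1$ and $C_2^*$ from $\sH$ to $\sH_2$. Since $C_1$ and $C_2$ are themselves columns, Proposition \ref{rowcol}(i) bounds each adjoint from below,
\[
 C_1^* \supset (E_{11}^*\,;\,E_{21}^*), \qquad
 C_2^* \supset (E_{12}^*\,;\,E_{22}^*),
\]
the entries being read as relations $E_{ij}^* : \sH_i \to \sH_j$. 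The monotonicity of the column construction recorded in \eqref{colext} then yields
\[
 E^* = \begin{pmatrix} C_1^* \\ C_2^* \end{pmatrix}
 \supset
 \begin{pmatrix} (E_{11}^*\,;\,E_{21}^*) \\ (E_{12}^*\,;\,E_{22}^*) \end{pmatrix}.
\]
Finally I would identify this column of rows with the left-hand side of \eqref{adjstar+}: applying Lemma \ref{matmat} to the block $[E_{ij}]^*$, whose entries sit in exactly the positions $E_{11}^*,E_{21}^*,E_{12}^*,E_{22}^*$, the column of its rows equals the row of its columns, so that the last display reads $E^* \supset (\,\col(E_{11}^*;E_{12}^*) \,;\, \col(E_{21}^*;E_{22}^*)\,)$, which is the asserted inclusion.

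The main obstacle I expect is not the algebra but the bookkeeping of adjoints: the symbols $C_1^*$, $C_2^*$, and each $E_{ij}^*$ live between different pairs of the spaces $\sH_1,\sH_2,\sH$, and the text has already flagged that an adjoint depends on the ambient Hilbert spaces. The inclusion (rather than equality) is forced at a single point, namely the invocation of Proposition \ref{rowcol}(i), where in general a column adjoint only contains the corresponding row of adjoints; the remaining ingredients --- part (ii), the monotonicity \eqref{colext}, and Lemma \ref{matmat} --- are all equalities, so no further loss is incurred and the chain closes as stated.
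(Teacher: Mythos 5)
Your proof is correct and follows essentially the same route as the paper's: apply Proposition \ref{rowcol}(ii) to the outer row, Proposition \ref{rowcol}(i) to each inner column, combine via the monotonicity \eqref{colext}, and finish with Lemma \ref{matmat} applied to the formal adjoint block $[E_{ij}]^*$. Your additional bookkeeping of the spaces between which each adjoint acts, and your remark that the inclusion arises solely from the single use of Proposition \ref{rowcol}(i), are both accurate and consistent with the paper's argument.
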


\begin{proof}
It follows from (ii) of Proposition \ref{rowcol} that
\[
\begin{pmatrix}
   \begin{pmatrix} E_{11} \\ E_{21} \end{pmatrix};
   \begin{pmatrix} E_{12} \\ E_{22} \end{pmatrix}
\end{pmatrix}^*
=
\begin{pmatrix}
   \begin{pmatrix} E_{11} \\ E_{21} \end{pmatrix}^* \vspace{0.2cm} \\
   \begin{pmatrix} E_{12} \\ E_{22} \end{pmatrix}^*
\end{pmatrix}.
\]
Likewise, the following inclusions are obtained
from (i) of Proposition \ref{rowcol}:
\[
\begin{pmatrix} E_{11} \\ E_{21} \end{pmatrix}^*
\supset \left( E_{11}^* \,;\, E_{21}^* \right)
\quad \mbox{and} \quad
\begin{pmatrix} E_{12} \\ E_{22} \end{pmatrix}^*
\supset \left( E_{12}^* \,;\, E_{22}^* \right).
\]
These two inclusions may be combined by \eqref{colext}, which gives
\[
\begin{pmatrix}
   \begin{pmatrix} E_{11} \\ E_{21} \end{pmatrix}^* \vspace{0.2cm} \\
   \begin{pmatrix} E_{11} \\ E_{21} \end{pmatrix}^*
\end{pmatrix}
\supset
\begin{pmatrix}
\left( E_{11}^* \,;\, E_{21}^* \right)    \\ \left( E_{12}^* \,;\, E_{22}^* \right)
\end{pmatrix}.
\]
By Lemma \ref{matmat}, one sees that
\[
\begin{pmatrix}
\left( E_{11}^* \,;\, E_{21}^* \right)    \\ \left( E_{12}^* \,;\, E_{22}^* \right)
\end{pmatrix}=
\begin{pmatrix}
  \begin{pmatrix} E_{11}^* \\ E_{12}^* \end{pmatrix} ;
  \begin{pmatrix} E_{21}^* \\ E_{22}^* \end{pmatrix}
\end{pmatrix},
\]
which completes the proof.
\end{proof}

As to equality in \eqref{adjstar+}, there are the following sufficient conditions;
cf. Proposition \ref{rowcol}
and   the identies in \eqref{rowcol+1} and \eqref{rowcol+2}.

\begin{corollary}
 Let $[E_{ij}]$ be a block as in \eqref{blok}.
Assume that, up to interchange of $A$ and $B$,
the entries of each column $\col (A;B)$ in $[E_{ij}]$ satisfy one of the following:
\begin{enumerate} \def\labelenumi{\rm(\roman{enumi})}
\item  the condition {\rm (iii)}  in Proposition \ref{rowcol};
\item $B=\sH \times \sK_2$;
\item $A$ is purely singular and $B=\{0\} \times \sM$.
\end{enumerate}
Then there is equality in \eqref{adjstar+}.
\end{corollary}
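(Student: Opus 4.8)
The plan is to revisit the chain of inclusions that established the general inclusion \eqref{adjstar+} and to pinpoint the single step where inclusion, rather than equality, enters. That argument runs as follows: part (ii) of Proposition \ref{rowcol} turns the adjoint of the row of the two columns into the column of the two column-adjoints; part (i) of Proposition \ref{rowcol} then gives, for each of the two columns, the inclusions $\col(E_{11};E_{21})^*\supset(E_{11}^*;E_{21}^*)$ and $\col(E_{12};E_{22})^*\supset(E_{12}^*;E_{22}^*)$; these are combined by \eqref{colext}; and Lemma \ref{matmat}, applied to the formal adjoint block $[E_{ij}]^*$, rewrites the resulting column of rows as the row of columns on the left of \eqref{adjstar+}. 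Every step except the application of part (i) is an identity. Hence it suffices to prove that, under the hypotheses, each of the two columns satisfies the \emph{equality}
\[
\col(A;B)^*=(A^*;B^*),\qquad (A,B)=(E_{1j},E_{2j}),\ j=1,2.
\]
Feeding these two equalities back into the same chain then upgrades \eqref{adjstar+} to an equality.

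So the whole matter reduces to the column-adjoint equality for a single column $\col(A;B)$, and it remains to check that each of (i)--(iii) supplies it. Case (i) is verbatim part (iii) of Proposition \ref{rowcol}. In case (ii), with $B=\sH\times\sK_2$ one computes directly that $B^*=\{0\}\times\{0\}$, and the identity \eqref{rowcol+1} with $\sM=\sK_2$ (so that $\sM^\perp=\{0\}$) gives $\col(A;B)^*=(A^*\,;\,\{0\}\times\{0\})=(A^*;B^*)$. In case (iii), with $B=\{0\}\times\sM$ one computes $B^*=\sM^\perp\times\sH$; now \eqref{rowcol+2} expresses $\col(A;B)^*$ as $(\cdom A^*\times\sH\,;\,\sM^\perp\times\sH)$, which contains $(A^*\,;\,\sM^\perp\times\sH)$, and the hypothesis that $A$ is purely singular is precisely the condition that turns this inclusion into an equality, yielding $\col(A;B)^*=(A^*\,;\,\sM^\perp\times\sH)=(A^*;B^*)$.

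Finally, the phrase ``up to interchange of $A$ and $B$'' is harmless: the columns $\col(A;B)$ and $\col(B;A)$ differ only by the unitary interchanging the two summands of the range space $\sK_1\oplus\sK_2$, under which the row $(A^*;B^*)$ is carried to $(B^*;A^*)$. Thus the column-adjoint equality holds for $\col(A;B)$ if and only if it holds for $\col(B;A)$, so it is immaterial which entry of a given column of $[E_{ij}]$ is assigned the role of $A$ and which that of $B$. Applying the appropriate one of (i)--(iii) to each of the two columns therefore produces the two required equalities, and the conclusion follows.

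The main obstacle is case (iii). Because the second entry $B=\{0\}\times\sM$ has maximal multivalued part, the row $(A^*\,;\,\sM^\perp\times\sH)$ absorbs much of the operator content of $A^*$, and one must check carefully that ``$A$ purely singular'' is precisely the equality case of \eqref{rowcol+2}, namely $\cdom A^*\times\sH=A^*$, so that $\col(A;B)^*$ indeed collapses to $(A^*;B^*)$. The remaining care is routine bookkeeping: computing $B^*$ correctly in cases (ii) and (iii), and tracking the index transposition when Lemma \ref{matmat} is invoked for the formal adjoint block $[E_{ij}]^*$ in place of $[E_{ij}]$ itself.
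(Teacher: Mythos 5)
Your reduction of \eqref{adjstar+} to the two columnwise equalities $\col(A\,;\,B)^*=(A^*\,;\,B^*)$ is exactly right: every other step in the chain (part (ii) of Proposition \ref{rowcol}, Lemma \ref{matmat}, and the combination via \eqref{colext}) is an identity, and your treatment of cases (i) and (ii), as well as the interchange argument, is correct and is what the paper intends.

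The gap is case (iii), which you yourself call ``the main obstacle'' and then never close. You assert, but do not prove, that ``$A$ purely singular'' is \emph{precisely} the equality case $\cdom A^*\times\sH=A^*$ of \eqref{rowcol+2}; since this assertion is the entire content of case (iii), and since the paper nowhere defines ``purely singular'', it cannot simply be waved through. Note first what the equality case really is: $\cdom A^*\times\sH=A^*$ forces $\mul A^*=\sH$, and $\mul A^*=(\dom A)^\perp$, so the criterion holds if and only if $\dom A=\{0\}$, i.e.\ if and only if $A$ is purely multivalued. With that reading the missing verification is short and should have been written out: $A=\{0\}\times\mul A$ gives $A^*=(\mul A)^\perp\times\sH$, whose domain is closed and whose multivalued part is all of $\sH$, hence $A^*=\cdom A^*\times\sH$ and
\[
\begin{pmatrix} A \\ \{0\}\times\sM \end{pmatrix}^*
=\left( \cdom A^*\times\sH \,;\, \sM^\perp\times\sH \right)
=\left( A^* \,;\, \sM^\perp\times\sH \right)
=\left( A^* \,;\, B^* \right).
\]

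Moreover, the postponed check is not ``routine bookkeeping'': under the other natural reading of the term --- ``singular'' in the sense of the paper's reference \cite{HSS2018}, according to which relations with dense domain such as $\cdom R\times\mul R^*$ in Remark \ref{mulmat} are singular --- both your claimed equivalence and the conclusion of the corollary fail. Take $A=\{\{f,\varphi(f)e\}:f\in\dom\varphi\}$ with $\varphi$ a discontinuous densely defined linear functional and $e\neq 0$; then $\mul A=\{0\}$, $A$ is singular (as $\ran A=\spn\{e\}\subset\mul\clos A$), and $A^*=\{e\}^\perp\times\{0\}$. Here $\col(A\,;\,\{0\}\times\sM)$ is purely multivalued with multivalued part $(\{0\}\oplus\sM)^\top$, so its adjoint is $(\sK_1\oplus\sM^\perp)^\top\times\sH$, whereas $(A^*\,;\,\sM^\perp\times\sH)=(\dom A^*\oplus\sM^\perp)^\top\times\sH$ with $\dom A^*=\{e\}^\perp\subsetneq\sK_1$: the inclusion is strict. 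So case (iii) is true only once ``purely singular'' is pinned down as $\dom A=\{0\}$, and the two-line argument above is then the proof; as it stands, your write-up identifies where the content lies but does not supply it.
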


The following observation concerns a useful property of a class
of singular relations in $\sH=\sH_1 \oplus \sH_2$.

\begin{corollary}\label{write}
Let $\sM_1, \sN_1 \subset \sH_1$ and $\sM_2, \sN_2 \subset \sH_2$
be closed linear subspaces. Then
\begin{equation}\label{write1}
  \begin{pmatrix} \sM_1 \times \sN_1 & \sM_2 \times  \sN_1 \\
 \sM_1 \times \sN_2 & \sM_2 \times \sN_2 \end{pmatrix}
 =
 (\sM_1 \oplus \sM_2)^\top \times (\sN_1 \oplus \sN_2)^\top.
\end{equation}
Moreover, if $\sN_1=\sM_1^\perp$ and $\sN_2=\sM_2^\perp$, then
the relation \eqref{write1} is selfadjoint.
\end{corollary}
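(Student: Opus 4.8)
The plan is to establish the block identity \eqref{write1} by a direct computation from the definition \eqref{matrix} of a block relation, and then to deduce selfadjointness from the adjoint formula for constant relations of the form $\sP\times\sQ$.

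First I would substitute the four entries $E_{11}=\sM_1\times\sN_1$, $E_{12}=\sM_2\times\sN_1$, $E_{21}=\sM_1\times\sN_2$ and $E_{22}=\sM_2\times\sN_2$ into \eqref{matrix}. A pair belongs to the generated relation $E$ exactly when its domain vector has first component $f_1\in\sM_1$ and second component $f_2\in\sM_2$, while its image vector has first component $\alpha_1+\beta_1$ with $\alpha_1,\beta_1\in\sN_1$ and second component $\alpha_2+\beta_2$ with $\alpha_2,\beta_2\in\sN_2$. Since $\sN_1$ and $\sN_2$ are subspaces, the image components range over all of $\sN_1$, respectively $\sN_2$ (every element is attained, e.g.\ by taking $\beta_1=0$, and every admissible sum stays inside the subspace), independently of the domain data. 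Hence $E$ is precisely the constant relation with domain $\sM_1\oplus\sM_2$ and range $\sN_1\oplus\sN_2$, which is the right-hand side of \eqref{write1}. The same conclusion follows from Lemma \ref{matmat}: by \eqref{row} each row $(\sM_1\times\sN_j\,;\,\sM_2\times\sN_j)$ equals $(\sM_1\oplus\sM_2)\times\sN_j$, and by \eqref{col} the column of these two rows is $(\sM_1\oplus\sM_2)\times(\sN_1\oplus\sN_2)$.

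For the selfadjointness assertion I would first record the adjoint of a constant relation: for closed subspaces $\sP,\sQ\subseteq\sH$ one has $(\sP\times\sQ)^*=\sQ^\perp\times\sP^\perp$, which is immediate from the defining relation of the adjoint by setting, in turn, the range datum and the domain datum equal to zero. Putting $\sM:=\sM_1\oplus\sM_2$ and using the hypotheses $\sN_1=\sM_1^\perp$, $\sN_2=\sM_2^\perp$ together with the orthogonality $\sH=\sH_1\oplus\sH_2$, I would check that $\sN_1\oplus\sN_2=\sM_1^\perp\oplus\sM_2^\perp=(\sM_1\oplus\sM_2)^\perp=\sM^\perp$. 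Thus the relation in \eqref{write1} equals $\sM\times\sM^\perp$, and the adjoint formula gives $(\sM\times\sM^\perp)^*=(\sM^\perp)^\perp\times\sM^\perp=\sM\times\sM^\perp$, where $(\sM^\perp)^\perp=\sM$ since $\sM$ is closed; hence the relation is selfadjoint.

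The argument is essentially bookkeeping, so there is no serious obstacle; the two points needing a little care are that the additive combinations of the range data in \eqref{matrix} (equivalently, in the rows of Lemma \ref{matmat}) collapse to exactly the subspaces $\sN_1$ and $\sN_2$ --- one must verify both the inclusion into and the surjectivity onto each $\sN_j$ --- and that the orthogonal complement taken componentwise in $\sH_1$ and $\sH_2$ coincides with the complement taken in $\sH$, which uses $\sH_1\perp\sH_2$. Once these are settled, the adjoint formula for constant relations closes the proof.
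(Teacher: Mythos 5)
Your proof is correct and takes essentially the same route as the paper's: the identity \eqref{write1} is read off directly from the block-relation formula \eqref{matrix}, and selfadjointness reduces to the fact that $\sM\times\sM^{\perp}$ is a selfadjoint relation for a closed subspace $\sM$. You simply spell out the ``direct arguments'' the paper leaves implicit, namely the adjoint formula $(\sP\times\sQ)^{*}=\sQ^{\perp}\times\sP^{\perp}$ and the identity $\sM_1^{\perp}\oplus\sM_2^{\perp}=(\sM_1\oplus\sM_2)^{\perp}$.
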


\begin{proof}
The identity \eqref{write1} follows directly from Definition \ref{matri}; see \eqref{matrix}.
The second statement is clear from \eqref{write1}, since one sees by
a direct argument that for any closed subspace $\sL$ of a Hilbert space $\sH$ 
the linear relation $\sL\oplus\sL^\perp$ is selfadjoint in $\sH$.  
\end{proof}

Here the notation $(\sM_1 \oplus \sM_2)^\top$ is a shortcut for the vector notation
\[
 \begin{matrix} \sM_1 \\ \oplus  \\ \sM_2 \end{matrix}
 =\left\{ \begin{pmatrix} h_1 \\ h_2 \end{pmatrix} :
                       h_1 \in \sM_1,\, h_2 \in \sM_2 \right\}
 =(\sM_1 \oplus \sM_2)^\top.
 \]
Hence, $ (\sM_1 \oplus \sM_2)^\top \times (\sN_1 \oplus \sN_2)^\top$ means
\[
(\sM_1 \oplus \sM_2)^\top \times (\sN_1 \oplus \sN_2)^\top=
\left\{ \left\{ \begin{pmatrix} h_1 \\h_2 \end{pmatrix},
\begin{pmatrix} k_1 \\k_2 \end{pmatrix} \right\}
 :\, h_i \in \sM_i, \, k_i \in \sN_i, \,i=1,2 \, \right\}.
\]

As a consequence of the above observations, one sees
that the block relations  \eqref{symm0} and \eqref{symm0*}
are well-defined, and that  \eqref{symm0*} is the adjoint of
\eqref{symm0}, so that \eqref{symm0} is symmetric.
It follows from Definition \ref{matri} that the relations defined by
\eqref{symm0} and in \eqref{symm} coincide. A similar statement holds for
the equality of \eqref{symm0*} and \eqref{symm*}.
Furthermore, one sees that the block relations \eqref{symmkk}
and \eqref{symmhh} are well-defined and selfadjoint.

\begin{remark}\label{mulmat}
It should be observed that the block representation of a linear relation
need not be unique. Note, as an example,  that $K$ in \eqref{symmkk} is equal
to the block relation
\begin{equation}\label{symmkkk}
  \begin{pmatrix} \cdom R \times \mul R^* & R^{*} \\
 R  & \cdom R^* \times \mul R \end{pmatrix},
\end{equation}
since \eqref{symmkk} and \eqref{symmkkk} are well-defined selfadjoint block relations,
and  \eqref{symmkk} is included in \eqref{symmkkk}.
To appreciate this equality, consider, for instance, the left upper corner
$\cdom R \times \mul R^*$ in \eqref{symmkkk},
which is a selfadjoint singular relation.
The elements in $\{0\} \times \mul R^*$ already appear
in the right upper corner, whereas $\cdom R \times \{0\}$
has a domain which includes the domain of the left bottom corner.
Hence, replacing $\cdom R \times \mul R^*$ by the selfadjoint
relation $\sH_1 \times \{0\}$ gives the same block relation.
 \end{remark}

\section{Linear relations whose domain and range are orthogonal}\label{orth}

Let $S$ be a linear relation in a Hilbert space $\sH$.
The interest will be in the rather special case that $\dom S \perp \ran S$.
Clearly, if $S$ has this property, then the same is true
for the inverse relation $S^{-1}$.
Note that the orthogonality  condition is always satisfied when either
$\dom S =\{0\}$ or $\ran S=\{0\}$. Here the orthogonality property
will be characterized in two different ways. \\

Recall that the \textit{numerical range} $\cW(S)$ of a linear
relation $S$ in $\sH$ is defined by
\[
\cW(S) = \{(g,f):\, \{f,g\}\in S:\, \|f\| = 1\,\} \subset \dC
\]
when $\dom S \neq \{0\}$, and by $\{0\}\subset\dC$
if $\dom S = \{0\}$, i.e. if $S$ is purely multivalued.
 It is clear that all eigenvalues in $\dC$ of $S$ belong
 to its numerical range $\cW(S)$.
 Moreover, for linear relations the numerical range is a convex set;
see \cite[Proposition~2.18]{HdSSz09}.
Clearly, the numerical range of the inverse of $S$ is given by
\[
\cW(S^{-1}) = \{ \lambda\in \dC: \overline\lambda \in \cW(S)\,\}.
\]
Here is the first characterization.

\begin{lemma}\label{numranzero}
Let $S$ be a linear relation  in $\sH$.
Then the following statements are equivalent:
\begin{enumerate} \def\labelenumi{\rm(\roman{enumi})}
\item $\dom S\perp \ran S$;
\item $\cW(S)=\{0\}$.
\end{enumerate}
\end{lemma}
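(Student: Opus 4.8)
The plan is to prove the two implications separately, first disposing of the degenerate case and then handling the generic one. If $\dom S=\{0\}$, then $\dom S\perp\ran S$ holds trivially and, by the very definition of the numerical range, $\cW(S)=\{0\}$; so both conditions are automatically satisfied and the equivalence is vacuous there. Hence throughout I assume $\dom S\neq\{0\}$.

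For the implication (i)$\Rightarrow$(ii) the argument is immediate: take any $\{f,g\}\in S$ with $\|f\|=1$. Then $f\in\dom S$ and $g\in\ran S$, so the orthogonality $\dom S\perp\ran S$ forces $(g,f)=0$. Since such normalized pairs exist (as $\dom S\neq\{0\}$), this shows $\cW(S)=\{0\}$.

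The substance of the lemma is the reverse implication (ii)$\Rightarrow$(i), and this is where I expect the only real work. I would first upgrade the hypothesis $\cW(S)=\{0\}$ to the statement that $(g,f)=0$ for \emph{every} pair $\{f,g\}\in S$, not just the normalized ones: if $f\neq 0$ I rescale $\{f,g\}$ by $1/\|f\|$ and invoke $\cW(S)=\{0\}$, while if $f=0$ the inner product $(g,f)$ vanishes trivially. The remaining task is to pass from this ``diagonal'' vanishing to genuine orthogonality of $\dom S$ and $\ran S$, i.e. to $(g_2,f_1)=0$ for pairs $\{f_1,g_1\},\{f_2,g_2\}\in S$ arising from possibly different elements. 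The key tool is that $S$, being a linear relation, is closed under the combination $\{f_1+\lambda f_2,\,g_1+\lambda g_2\}\in S$ for every $\lambda\in\dC$; expanding $(g_1+\lambda g_2,\,f_1+\lambda f_2)=0$ and discarding the two vanishing diagonal terms leaves, for all $\lambda\in\dC$, an identity of the form $\overline\lambda\,(g_1,f_2)+\lambda\,(g_2,f_1)=0$ (with the precise placement of the conjugate depending on the inner-product convention).

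The main, and essentially only, obstacle is then a polarization argument: evaluating this identity at two suitable values, for instance $\lambda=1$ and $\lambda=i$, separates the two cross terms and forces each of them to vanish, in particular $(g_2,f_1)=0$. Since $f_1$ ranges over all of $\dom S$ and $g_2$ over all of $\ran S$ as the pairs vary, this yields $\ran S\perp\dom S$, which is (i). I would only double-check that the subcase $f_2=0$, i.e. $g_2\in\mul S$, is covered by the same computation, which it is, since the diagonal term $(g_2,f_2)$ is again zero.
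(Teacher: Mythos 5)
Your proof is correct and takes essentially the same route as the paper: the paper's proof of (ii)$\Rightarrow$(i) is exactly the polarization argument you reconstruct, applying $\cW(S)=\{0\}$ to the pairs $\{f_1+\lambda f_2,\,g_1+\lambda g_2\}\in S$ and separating the cross terms $(g_1,f_2)$ and $(g_2,f_1)$. The only cosmetic difference is that the paper writes down the four-term polarization identity explicitly, whereas you obtain the same cancellation from the two evaluations $\lambda=1$ and $\lambda=i$ after noting that the diagonal terms already vanish.
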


\begin{proof}
(i) $ \Rightarrow$ (ii) This implication is clear from the definition of $\cW(S)$.

(ii) $ \Rightarrow$ (i) To prove this reverse implication
the following modification of polarization identity is needed:
for all $\{f_1,g_1\}, \{f_2,g_2\}\in S$ one has
\begin{equation}\label{polar}
\begin{split}
   (g_1,f_2)& = \dfrac{1}{4}\, \big[(g_1+g_2,f_1+f_2)-(g_1-g_2,f_1-f_2)  \\
   &\hspace{1.5cm}  +i(g_1+ig_2,f_1+if_2)- i(g_1-ig_2,f_1-if_2) \big].
\end{split}
\end{equation}
Now assume that $f_1\in \dom S$ and $g_2\in \ran S$.
Then $\{f_1,g_1\}, \{f_2,g_2\}\in S$ for some $g_1,f_2\in\sH$.
Hence if (ii) holds, then the left-hand side of \eqref{polar}
shows that $(g_1,f_2)=0$ and thus $\dom S\perp \ran S$.
\end{proof}

Thus, if $\dom S \perp \ran S$, then  it is clear that the relation $S$
is symmetric and that only $\lambda=0$ can be an eigenvalue of $S$.
In fact, the orthogonality property implies that $S$ is
semibounded; for instance, $S$ is semibounded from below
with lower bound $m(S)=0$. \\

The following result is a characterization of the
linear relation in \eqref{symm0} and \eqref{symm}:
it shows that one can express the results
in terms of $R$ or $S$.

\begin{lemma}
Let $S$ be a linear relation  in $\sH$.
Then the following statements are equivalent:
\begin{enumerate} \def\labelenumi{\rm(\roman{enumi})}
\item $\dom S\perp \ran S$;
\item  $\sH=\sH_1 \oplus \sH_2$ and there exists a linear relation $R$
from  $\sH_1$ to  $\sH_2$, such that
\begin{equation}\label{tstar0}
S
=\left\{ \,\left\{  \begin{pmatrix} f_{1} \\ 0\end{pmatrix},
\begin{pmatrix} 0  \\ g_{2} \end{pmatrix} \right\} :
\, \{f_{1}, g_{2}\} \in R\, \right\}.
\end{equation}
\end{enumerate}
\end{lemma}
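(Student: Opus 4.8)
The plan is to treat the two implications separately. The implication (ii) $\Rightarrow$ (i) is immediate: reading off domain and range from the graph \eqref{tstar0}, every element of $\dom S$ has the form $\begin{pmatrix} f_1 \\ 0\end{pmatrix}$ and every element of $\ran S$ has the form $\begin{pmatrix} 0 \\ g_2\end{pmatrix}$, so that $\dom S \subseteq \sH_1 \oplus \{0\}$ and $\ran S \subseteq \{0\} \oplus \sH_2$; these are orthogonal subspaces of $\sH=\sH_1 \oplus \sH_2$, whence $\dom S \perp \ran S$.

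For the converse (i) $\Rightarrow$ (ii) the real work is to manufacture the decomposition of $\sH$ from $S$ itself. Assuming $\dom S \perp \ran S$, continuity of the inner product gives $\cdom S \perp \cran S$. I would then set $\sH_1 := \cdom S$ and $\sH_2 := \sH \ominus \sH_1 = (\cdom S)^\perp$, so that $\sH=\sH_1 \oplus \sH_2$ orthogonally, and the orthogonality $\cran S \perp \cdom S$ forces $\cran S \subseteq (\cdom S)^\perp = \sH_2$. Hence for every $\{f,g\}\in S$ one has $f \in \dom S \subseteq \sH_1$ and $g \in \ran S \subseteq \sH_2$, so that $f$ has vanishing $\sH_2$-component and $g$ vanishing $\sH_1$-component in this splitting.

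It then remains to define $R$ as the relation from $\sH_1$ to $\sH_2$ consisting of all pairs $\{f,g\}$ with $\{f,g\}\in S$, where $f$ is viewed in $\sH_1$ and $g$ in $\sH_2$. By the preceding observation this is a linear relation from $\sH_1$ to $\sH_2$, and the assignment $\{f,g\} \mapsto \{\begin{pmatrix} f \\ 0\end{pmatrix}, \begin{pmatrix} 0 \\ g\end{pmatrix}\}$ is a bijection of $R$ onto $S$; this is precisely the representation \eqref{tstar0}.

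I do not expect a genuine obstacle here: the argument is just a bookkeeping of domains and ranges once the splitting is fixed. The only point worth flagging is that the decomposition is not unique — any closed subspace $\sH_1$ with $\cdom S \subseteq \sH_1 \subseteq (\cran S)^\perp$ works equally well — and I simply select the canonical minimal choice $\sH_1 = \cdom S$.
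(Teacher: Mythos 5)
Your proof is correct and follows essentially the same route as the paper: the forward implication is read off from the graph, and the converse is obtained by splitting $\sH$ orthogonally with $\dom S$ on one side and $\ran S$ on the other and defining $R$ from the pairs of $S$. The only cosmetic difference is that you fix the canonical choice $\sH_1=\cdom S$, whereas the paper leaves the decomposition arbitrary (noting, as you do, its non-uniqueness).
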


\begin{proof}
(i) $\Rightarrow$ (ii)
Assume that  $\dom S \perp \ran S$. Then
choose an orthogonal decomposition $\sH=\sH_1 \oplus \sH_2$,
such that  $\dom S \subset \sH_1$ and $\ran S \subset \sH_2$.
Define the linear relation $R$ from $\sH_1$ to $\sH_2$ by
 \[
R=\left\{ \{f_1,g_2\}\in\sH_1\times \sH_2:\,
\left\{ \begin{pmatrix} f_1\\ 0 \end{pmatrix},
\begin{pmatrix} 0 \\ g_2 \end{pmatrix}  \right\} \in S\right\}.
\]
It follows that $S$ is of the form \eqref{symm}.
Of course, the choice $\dom S \subset \sH_1$
and $\ran S \in \sH_2$ is arbitrary:
one may also interchange the spaces which results
in taking the inverse of $S$.

(ii) $\Rightarrow$ (i) This implication is clear.
\end{proof}

Note that the relation $S$ in $\sH$ defined in \eqref{tstar0}
is closed if and only if
the relation $R$ from $\sH_1$ to $\sH_2$ is closed.  \\

In the rest of the paper the attention is restricted
to linear relations in $\sH$ for which $\dom S \perp \ran S$ or,
equivalently, $\cW(S)=\{0\}$.
In this case $S$ is of the form \eqref{tstar0}.
The elements of $R$ as a linear relation from $\sH_1$ to $\sH_2$
will be denoted by $\{f_1,f_2\}$,
but frequently, depending on the situation,
also in vector notation by
\[
\begin{pmatrix} f_1 \\ f_2 \end{pmatrix},
\quad \mbox{where} \quad f_1 \in \sH_1, \quad f_2 \in \sH_2.
\]
The adjoint  $R^{*}$ is a closed linear relation from $\sH_2$ to $\sH_1$.
Hence, if $R$ is closed, then it is clear that
\begin{equation}\label{decomp}
\sH_1 \oplus \sH_2 =R \hoplus R^{\perp},
\end{equation}
which is an orthogonal decomposition of $\sH_1 \oplus \sH_2$, where
\begin{equation}\label{decompl}
R^{\perp}=JR^{*}
=\left\{ \,\begin{pmatrix} \beta \\ -\alpha\end{pmatrix}: \,
\{\alpha, \beta\} \in R^{*} \,\right\},
\end{equation}
and $J$ stands for the flip-flop operator $J\{\varphi, \psi\}=\{\psi, -\varphi\}$.

\section{A boundary triplet generated by a closed linear relation}\label{BT5}

Let $S$ be a closed linear relation in a Hilbert space $\sH$ for which
$\dom S \perp \ran S$. Then $\sH=\sH_1 \oplus \sH_2$ and there
exists a closed linear relation $R$ from $\sH_1$ to $\sH_2$ such that
$S$ is given by \eqref{tstar0}. In order to describe the selfadjoint
extensions of $S$ in $\sH$ a suitable boundary triplet will be chosen for $S^*$.
A first step is the determination of the adjoint $S^*$ of $S$  below.

\begin{lemma}
Let $R$ be a closed linear relation from $\sH_1$ to $\sH_2$ and let $S$
be the closed symmetric relation defined in  \eqref{tstar0}. Then
\begin{equation}\label{tstar}
 S^*
 =\left\{\, \left\{\begin{pmatrix} h_{1}  \\ h_{2} \end{pmatrix},
 \begin{pmatrix} k_{1}  \\ k_{2} \end{pmatrix} \right\} :
 \, h_{1} \in \sH_{1}, \, \{h_{2}, k_{1}\} \in R^*, \,k_{2} \in \sH_{2} \,  \right\}.
\end{equation}
\end{lemma}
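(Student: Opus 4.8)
The plan is to compute $S^*$ directly from the definition of the adjoint, exploiting that every element of $S$ has a vanishing second component in its first entry and a vanishing first component in its second entry. Write a candidate pair $\{h,k\}\in\sH\times\sH$ in components, $h=\begin{pmatrix}h_1\\h_2\end{pmatrix}$ and $k=\begin{pmatrix}k_1\\k_2\end{pmatrix}$ with $h_i,k_i\in\sH_i$. By definition $\{h,k\}\in S^*$ precisely when
\[
(g,h)_{\sH}=(f,k)_{\sH}\quad\text{for all }\{f,g\}\in S.
\]
According to \eqref{tstar0} such $f,g$ are $f=\begin{pmatrix}f_1\\0\end{pmatrix}$ and $g=\begin{pmatrix}0\\g_2\end{pmatrix}$ with $\{f_1,g_2\}\in R$, and these exhaust $S$ as $\{f_1,g_2\}$ ranges over $R$.

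The key simplification is that the zero blocks collapse the inner products in $\sH=\sH_1\oplus\sH_2$ to single components: the left-hand side becomes $(g_2,h_2)_{\sH_2}$ and the right-hand side becomes $(f_1,k_1)_{\sH_1}$. Hence $\{h,k\}\in S^*$ if and only if
\[
(g_2,h_2)_{\sH_2}=(f_1,k_1)_{\sH_1}\quad\text{for all }\{f_1,g_2\}\in R,
\]
and, crucially, no condition at all is imposed on $h_1$ or $k_2$, since these components never appear ($f$ has no second component and $g$ has no first component).

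First I would observe that the displayed identity is exactly the defining relation for the adjoint $R^*$, viewed as a relation from $\sH_2$ to $\sH_1$: it says precisely that $\{h_2,k_1\}\in R^*$. Combined with the freedom $h_1\in\sH_1$, $k_2\in\sH_2$, this yields \eqref{tstar}. I do not expect a genuine obstacle; the argument is essentially bookkeeping. The single point requiring care is the slot and conjugation convention in the adjoint pairing and in the definition of $R^*$ between the two distinct spaces $\sH_1$ and $\sH_2$: one must use the same convention as for $\cW(S)$ and the polarization identity \eqref{polar}, so that the reduced identity is literally the one defining $R^*$.

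As an alternative route, one could avoid the explicit pairing computation and appeal to the block calculus of Section \ref{tweee}: the relation $S$ is the block relation \eqref{symm0}, and it has already been recorded that its adjoint is the block relation \eqref{symm0*}; reading off the graph representation of \eqref{symm0*} then reproduces \eqref{tstar}. I would nonetheless present the direct computation, as it is the shortest self-contained argument.
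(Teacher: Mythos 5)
Your proposal is correct and follows essentially the same route as the paper: both reduce the adjoint pairing in $\sH_1\oplus\sH_2$ to the single identity $(k_1,f_1)=(h_2,g_2)$ for all $\{f_1,g_2\}\in R$, note that $h_1$ and $k_2$ are unconstrained, and recognize the reduced identity as the defining relation for $\{h_2,k_1\}\in R^*$, giving both inclusions at once. The alternative block-calculus route you mention is also what the paper alludes to in Section \ref{tweee}, but like you it presents the direct computation as the proof.
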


\begin{proof}
The assertion follows immediately from the identity
\[
\left( \begin{pmatrix} k_{1} \\ k_{2} \end{pmatrix} ,
\begin{pmatrix} f_{1}  \\ 0 \end{pmatrix} \right)
- \left( \begin{pmatrix} h_{1}  \\ h_{2} \end{pmatrix},
\begin{pmatrix} 0  \\ g_{2} \end{pmatrix} \right)
=(k_1, f_1)-(h_2,g_2).
\]
This identity shows that the right-hand side of\eqref{tstar}
is contained in the adjoint $S^*$,
as $(k_1, f_1)-(h_2,g_2)=0$ for all $\{f_1,g_2 \} \in R$
and $\{h_2,k_1\} \in R^*$.
The adjoint relation $S^*$ is contained
in the right-hand side of \eqref{tstar}
 as $(k_1, f_1)=(h_2,g_2)$ for all $\{f_1,g_2\} \in R$
implies that $\{h_2,k_1\} \in R^*$.
\end{proof}

For $\lambda \in \dC$ the eigenspace associated
with \eqref{tstar} is given by
\[
 \wh \sN_{\lambda}(S^*)
 = \left\{ \,\left\{\begin{pmatrix} h_{1}  \\ h_{2} \end{pmatrix},
 \begin{pmatrix} k_{1}  \\ k_{2} \end{pmatrix} \right\}:\, k_{1}
 =\lambda h_{1}, \, k_{2}=\lambda h_{2}, \,  \,\{h_{2}, k_{1}\} \in R^* \,\right\},
\]
and,  hence, with $\sN_\lambda(S^*)=\ker (S^*-\lambda)$, one has
\[
 \sN_{\lambda}(S^*)= \left\{ \begin{pmatrix} h_{1}  \\ h_{2} \end{pmatrix} :
 \,\{h_{2},\lambda h_{1}\} \in R^* \,\right\}.
\]
Likewise, the multivalued part of $S^*$ is given by
\[
 \mul S^*=  \left\{ \begin{pmatrix} k_{1}  \\ k_{2} \end{pmatrix} : \,
 k_1 \in \mul R^*, \, k_2 \in \sH_2 \,\right\}.
 \]

The particular form of $S^{*}$ in \eqref{tstar} leads to
 a ``natural'' boundary triplet for $S^{*}$; cf.  \cite{BHS}, \cite{DM}.
 For this, one needs to define a parameter space $\cG$,
and it turns out that
 \begin{equation}\label{hk00}
 \cG=   R^\perp = \left\{ \begin{pmatrix} h_{1}  \\ h_{2} \end{pmatrix} :
 \,\{h_{2},-h_{1}\} \in R^* \,\right\}= \sN_{-1}(S^{*}),
\end{equation}
is an appropriate candidate, where
$R^\perp=(\sH_1 \oplus \sH_2) \ominus R$.
It is useful to observe that for $\{h_1, h_2 \} \in \cG$
there are the following trivial equivalences:
\[
h_2=0 \quad \Leftrightarrow \quad h_1 \in \mul R^*,
\]
and, likewise
\[
h_1=0 \quad \Leftrightarrow \quad h_2 \in \ker R^*.
\]
Let $Q$ be the orthogonal projection
from $\sH_1 \oplus \sH_2$ onto $\cG$.

\begin{theorem}\label{BT}
Let $R$ be a closed linear relation from $\sH_1$ to
$\sH_2$ and let $S$
be the symmetric relation defined in  \eqref{tstar0}
with adjoint \eqref{tstar}.
Let $Q$ be the orthogonal projection from
$\sH_1 \oplus \sH_2$ onto $\cG$
in \eqref{hk00}. Assume that
\begin{equation}\label{hk}
 \left\{\begin{pmatrix} h_{1}  \\ h_{2} \end{pmatrix},
 \begin{pmatrix} k_{1}  \\ k_{2} \end{pmatrix} \right\},
 \quad \{h_{2}, k_{1}\} \in R^{*},
\end{equation}
is an element in $S^{*}$ and define
\begin{equation}\label{bt}
 \Gamma_0  \left\{\begin{pmatrix} h_{1}  \\ h_{2} \end{pmatrix},
 \begin{pmatrix} k_{1}  \\ k_{2} \end{pmatrix} \right\}
 =\begin{pmatrix} -k_{1}  \\ h_{2} \end{pmatrix}
\quad \mbox{and} \quad
\Gamma_1  \left\{\begin{pmatrix} h_{1}  \\ h_{2} \end{pmatrix},
\begin{pmatrix} k_{1}  \\ k_{2} \end{pmatrix} \right\}
=Q  \begin{pmatrix} h_{1} \\ k_{2} \end{pmatrix}.
\end{equation}
Then $\Gamma_{0}$ and $\Gamma_{1}$ are mappings
from $S^{*}$ onto $\cG$
and $\{ \cG, \Gamma_0, \Gamma_1\}$
is a boundary triplet for the relation $S^{*}$.
\end{theorem}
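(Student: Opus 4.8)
The plan is to verify the two defining properties of a boundary triplet for $S^{*}$, in the sense of \cite{BHS}, \cite{DM}: the abstract Green identity together with the surjectivity of the joint map $\Gamma=(\Gamma_0,\Gamma_1)$ from $S^{*}$ onto $\cG\times\cG$. As a preliminary step I would record that $\Gamma_0$ and $\Gamma_1$ really take values in $\cG$. For $\Gamma_1$ this is automatic, since $Q$ is the orthogonal projection onto $\cG$. For $\Gamma_0$, applied to the element \eqref{hk} the value has components $-k_1$ and $h_2$, and the membership criterion for $\cG$ in \eqref{hk00} asks exactly that $\{h_2,-(-k_1)\}=\{h_2,k_1\}\in R^{*}$, which is the standing hypothesis on \eqref{hk}.

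For the Green identity I would expand both sides in components. Writing vectors of $\sH=\sH_1\oplus\sH_2$ in coordinates, take $\hat f=\{f,f'\}$ and $\hat g=\{g,g'\}$ in $S^{*}$ with $f=(h_1,h_2)$, $f'=(k_1,k_2)$, $g=(u_1,u_2)$, $g'=(v_1,v_2)$, where $\{h_2,k_1\}\in R^{*}$ and $\{u_2,v_1\}\in R^{*}$. The left-hand side is the scalar $(f',g)_{\sH}-(f,g')_{\sH}=(k_1,u_1)+(k_2,u_2)-(h_1,v_1)-(h_2,v_2)$, and I must match this with
\[
(\Gamma_1\hat f,\Gamma_0\hat g)_{\cG}-(\Gamma_0\hat f,\Gamma_1\hat g)_{\cG}.
\]
The crucial observation is that $\Gamma_0\hat f$ and $\Gamma_0\hat g$ already lie in $\cG$; since $Q=Q^{*}$ and $(Qx,y)=(x,y)$ whenever $y\in\cG$, the projection occurring in $\Gamma_1$ may be discarded against these factors. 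This gives $(\Gamma_1\hat f,\Gamma_0\hat g)_{\cG}=-(h_1,v_1)+(k_2,u_2)$ and $(\Gamma_0\hat f,\Gamma_1\hat g)_{\cG}=-(k_1,u_1)+(h_2,v_2)$, and subtracting reproduces the four terms above verbatim. No property of $R^{*}$ is needed beyond the membership of the $\Gamma_0$-values in $\cG$.

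For surjectivity I would simply exhibit a right inverse of $\Gamma$. Given $a=(a_1,a_2)\in\cG$ and $b=(b_1,b_2)\in\cG$, set $h_2=a_2$, $k_1=-a_1$, $h_1=b_1$, $k_2=b_2$. Since $a\in\cG$ one has $\{h_2,k_1\}=\{a_2,-a_1\}\in R^{*}$, so by \eqref{tstar} the pair $\hat f=\{(h_1,h_2),(k_1,k_2)\}$ is a genuine element of $S^{*}$; then $\Gamma_0\hat f=(-k_1,h_2)=a$ and $\Gamma_1\hat f=Qb=b$, the last equality because $b\in\cG$. This realizes every $(a,b)\in\cG\times\cG$, and in particular shows that $\Gamma_0$ and $\Gamma_1$ are each onto $\cG$, as the theorem asserts.

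I expect the only delicate point to be the bookkeeping in the Green identity, namely the disciplined use of the identity $(Qx,y)=(x,y)$, valid whenever $y\in\cG$ (since then $x-Qx\perp y$); once this is in place the verification is a finite matching of coordinate inner products. Although $S=\ker\Gamma_0\cap\ker\Gamma_1$ then follows from the general theory, it is reassuring to check it directly: $\Gamma_0\hat f=0$ forces $k_1=0$ and $h_2=0$, while $\Gamma_1\hat f=0$ means $(h_1,k_2)\perp\cG$, i.e. $\{h_1,k_2\}\in R$ because $\cG^{\perp}=R$; together these describe precisely the relation $S$ of \eqref{tstar0}.
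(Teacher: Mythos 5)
Your proposal is correct and follows essentially the same route as the paper's own proof: membership of the $\Gamma_0$-values in $\cG$ via \eqref{hk00}, the Green identity by discarding the projection $Q$ against factors already lying in $\cG$, and surjectivity by choosing $h_2,k_1$ from one prescribed element of $\cG$ and $h_1,k_2$ from the other. Your closing verification that $\ker\Gamma_0\cap\ker\Gamma_1=S$ is a correct bonus check, not required by the definition of a boundary triplet and not present in the paper's proof.
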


\begin{proof}
Observe for the element in \eqref{hk} that
$\{h_{2}, k_{1}\} \in R^{*}$ by definition, so that
by \eqref{hk00} one concludes that
\[
 \begin{pmatrix} -k_{1} \\ h_{2} \end{pmatrix} \in \cG.
\]
Note that $\Gamma_0$ and $\Gamma_1$ map $S^*$ into $\cG$.
Therefore, for general elements in $S^{*}$ of the form
\[
 \left\{\begin{pmatrix} h_{1}  \\ h_{2} \end{pmatrix},
 \begin{pmatrix} k_{1}  \\ k_{2} \end{pmatrix} \right\},
 \quad
 \left\{\begin{pmatrix} f_{1}  \\ f_{2} \end{pmatrix},
 \begin{pmatrix} g_{1}  \\ g_{2} \end{pmatrix} \right\},
\]
one has the Green identity
\[
\begin{split}
&\left( \begin{pmatrix} k_{1} \\ k_{2} \end{pmatrix} ,
\begin{pmatrix} f_{1}  \\ f_{2} \end{pmatrix} \right)
- \left( \begin{pmatrix} h_{1}  \\ h_{2} \end{pmatrix},
\begin{pmatrix} g_{1}  \\ g_{2} \end{pmatrix} \right)  \\
  &\hspace{1cm} =\left( \begin{pmatrix} h_{1} \\ k_{2} \end{pmatrix},
 \begin{pmatrix} -g_{1} \\ f_{2} \end{pmatrix} \right)
 -\left( \begin{pmatrix} -k_{1} \\h_{2} \end{pmatrix},
 \begin{pmatrix} f_{1} \\ g_{2} \end{pmatrix} \right)\\
 &\hspace{1cm} =\left( \begin{pmatrix} h_{1} \\k_{2} \end{pmatrix},
 Q \begin{pmatrix} -g_{1} \\f_{2} \end{pmatrix} \right)
 -\left( Q \begin{pmatrix} -k_{1} \\h_{2} \end{pmatrix},
 \begin{pmatrix} f_{1} \\ g_{2} \end{pmatrix} \right)\\
 &\hspace{1cm} =\left( Q\begin{pmatrix} h_{1} \\k_{2} \end{pmatrix},
  \begin{pmatrix} -g_{1} \\f_{2} \end{pmatrix} \right)
 -\left(  \begin{pmatrix} -k_{1} \\h_{2} \end{pmatrix},
 Q \begin{pmatrix} f_{1} \\ g_{2} \end{pmatrix} \right).
\end{split}
\]
Thus the abstract Green identity holds with the mappings
$\Gamma_{0}$ and $\Gamma_{1}$ in \eqref{bt}.

It is clear from the definition of $S^{*}$ that the mapping
$\Gamma_{0}$ is onto $\cG$.
Furthermore, in the definition of $S^{*}$ the elements
$h_{1} \in \sH_1$ and $k_{2} \in \sH_2$ are arbitrary;
in particular one can choose them as an arbitrary pair
in $\cG=\sN_{-1}(S^{*})$. Hence, the joint mapping
$(\Gamma_{0},\Gamma_{1})$ takes
$S^{*}$ onto $\cG\times \cG$. Consequently,
$\{ \cG, \Gamma_0, \Gamma_1\}$ is a boundary triplet
for the relation $S^*$.
 \end{proof}

The boundary triplet in \eqref{bt} determines
a pair of selfadjoint extensions of $S$.
In particular, $H=\ker \Gamma_{0}$
is a selfadjoint extension of $S$ given by
\begin{equation}\label{hh}
 H=\left\{\, \left\{  \begin{pmatrix} h_{1} \\ 0\end{pmatrix},
 \begin{pmatrix} 0  \\ k_{2} \end{pmatrix} \right\} :
 \, h_{1} \in \sH_1, \, k_{2} \in \sH_2\, \right\},
\end{equation}
and $m(H)=0$.   It is clear that
$H$ is a singular relation as
\[
 H= (\sH_1 \oplus \{0\} )^\top \times (\{0\} \oplus \sH_2)^\top;
\]
cf. \cite{HSS2018}. Note that $H$ coincides
with the block relation \eqref{symmhh}.
Clearly,  the spectrum of $H$
consists only of the eigenvalue $0 \in \sigma_{\rm p}(H)$,
so that $\rho(H)=\dC \setminus \{0\}$. Note that  for $\lambda \neq 0$,
it follows from the identity
\[
 \left\{\begin{pmatrix} h_{1}  \\ h_{2} \end{pmatrix},
 \begin{pmatrix} k_{1}  \\ k_{2} \end{pmatrix} \right\}
 =\left\{\begin{pmatrix} h_{1}-\frac{1}{\lambda} k_1  \\ 0 \end{pmatrix},
 \begin{pmatrix} 0  \\ k_{2}-\lambda h_2 \end{pmatrix} \right\}
 +\left\{\begin{pmatrix} \frac{1}{\lambda}k_{1}  \\ h_{2} \end{pmatrix},
 \begin{pmatrix} k_{1}  \\ \lambda h_{2} \end{pmatrix} \right\},
\]
together with \eqref{tstar},  \eqref{hh}, and \eqref{hk00}, that
 \[
 S^*=H \hplus \wh \sN_{\lambda}(S^*), \quad \lambda \neq 0.
\]
 It is straightforward to see that for $\varphi_{1} \in \sH_1$
and $\varphi_{2} \in \sH_2$ one has
\[
 (H-\lambda)^{-1} \begin{pmatrix} \varphi_{1} \\ \varphi_{2} \end{pmatrix}
 =\begin{pmatrix} -\frac{1}{\lambda} \varphi_{1} \\ \varphi_{2} \end{pmatrix},
 \quad \lambda \in \dC\setminus \{0\}.
\]
These preparations lead to the descriptions for the $\gamma$-field
and the Weyl function corresponding to the boundary triplet in \eqref{bt}.

\begin{theorem}\label{WEYL}
Let $R$ be a closed linear relation from $\sH_1$ to $\sH_2$
and let $S$
be the symmetric relation defined in  \eqref{tstar0}.
Let $Q$ be the orthogonal projection from
$\sH_1 \oplus \sH_2$ onto $\cG$
in \eqref{hk00}.
Let the boundary triplet $\{ \cG, \Gamma_{0}, \Gamma_{1}\}$
be given by \eqref{bt}. Then the corresponding $\gamma$-field
and Weyl function are given by
 \begin{equation}\label{gamm}
 \gamma(\lambda)
 =\begin{pmatrix} -\frac{1}{\lambda} & 0\\ 0 & 1 \end{pmatrix} \uphar \cG,
 \quad
 M(\lambda)
 =Q \begin{pmatrix} -\frac{1}{\lambda} & 0\\ 0 & \lambda \end{pmatrix}  \uphar \cG,
 \quad \lambda \in \dC \setminus \{0\}.
\end{equation}
\end{theorem}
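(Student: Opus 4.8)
The plan is to read off both objects directly from their definitions, using the explicit description of the eigenspaces $\wh \sN_{\lambda}(S^*)$ already recorded above. Recall that for the boundary triplet $\{\cG,\Gamma_0,\Gamma_1\}$ the $\gamma$-field is $\gamma(\lambda)=\pi_1\,(\Gamma_0\uphar\wh \sN_{\lambda}(S^*))^{-1}$, where $\pi_1$ sends a pair $\{f_\lambda,\lambda f_\lambda\}$ to its first component $f_\lambda$, and the Weyl function is $M(\lambda)=\Gamma_1\,(\Gamma_0\uphar\wh \sN_{\lambda}(S^*))^{-1}$. Since $H=\ker\Gamma_0$ was shown to satisfy $\rho(H)=\dC\setminus\{0\}$, the restriction $\Gamma_0\uphar\wh \sN_{\lambda}(S^*)$ is for every $\lambda\neq0$ a bijection onto $\cG$, so both quantities are well defined on $\cG$ for such $\lambda$, and the decomposition $S^*=H\hplus\wh \sN_{\lambda}(S^*)$ noted earlier makes this explicit.

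First I would write out a general element of $\wh \sN_{\lambda}(S^*)$. By the eigenspace description it has the form $\big\{\begin{pmatrix} h_1\\ h_2\end{pmatrix},\begin{pmatrix}\lambda h_1\\ \lambda h_2\end{pmatrix}\big\}$ with $\{h_2,\lambda h_1\}\in R^*$; in particular $k_1=\lambda h_1$ and $k_2=\lambda h_2$. Applying $\Gamma_0$ from \eqref{bt} gives $\begin{pmatrix}-\lambda h_1\\ h_2\end{pmatrix}$, and this indeed lies in $\cG$ precisely because $\{h_2,\lambda h_1\}\in R^*$ is the defining condition for membership in $\cG=R^\perp$ via \eqref{hk00}. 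Writing $\varphi=\begin{pmatrix}\varphi_1\\ \varphi_2\end{pmatrix}=\begin{pmatrix}-\lambda h_1\\ h_2\end{pmatrix}$ and solving yields $h_1=-\tfrac{1}{\lambda}\varphi_1$ and $h_2=\varphi_2$, so the unique element of $\wh \sN_{\lambda}(S^*)$ with $\Gamma_0$-image $\varphi$ has first component $\begin{pmatrix}-\tfrac{1}{\lambda}\varphi_1\\ \varphi_2\end{pmatrix}$. This is exactly the asserted $\gamma(\lambda)=\begin{pmatrix} -\frac{1}{\lambda} & 0\\ 0 & 1\end{pmatrix}\uphar\cG$. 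The Weyl function then follows by applying $\Gamma_1$ to the same element: since $k_2=\lambda h_2$, the definition in \eqref{bt} gives $\Gamma_1=Q\begin{pmatrix} h_1\\ \lambda h_2\end{pmatrix}$, and substituting $h_1=-\tfrac{1}{\lambda}\varphi_1$, $h_2=\varphi_2$ produces $M(\lambda)\varphi=Q\begin{pmatrix}-\tfrac{1}{\lambda}\varphi_1\\ \lambda\varphi_2\end{pmatrix}$, which is the stated $M(\lambda)=Q\begin{pmatrix} -\frac{1}{\lambda} & 0\\ 0 & \lambda\end{pmatrix}\uphar\cG$.

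I do not expect a genuine obstacle here: once the defect elements are written out, the computation is short. The only points needing care are bookkeeping ones, namely keeping track of which entries are free ($h_1$ and $k_2$) against those constrained by $\{h_2,k_1\}\in R^*$, and verifying that the $\Gamma_0$-image truly lands in $\cG$ rather than merely in $\sH_1\oplus\sH_2$. One should also note that the projection $Q$ cannot be dropped in the Weyl function, since $\begin{pmatrix}-\tfrac{1}{\lambda}\varphi_1\\ \lambda\varphi_2\end{pmatrix}$ need not belong to $\cG$; this is in contrast to the $\gamma$-field, whose range is $\sN_{\lambda}(S^*)$ by construction and where consequently no projection appears.
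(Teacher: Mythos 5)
Your proposal is correct and follows essentially the same route as the paper: both write out the elements of $\wh \sN_{\lambda}(S^*)$, apply $\Gamma_0$ and $\Gamma_1$ from \eqref{bt}, and then invert the parametrization (the paper's ``replacing $-\lambda h_{1}$ by $h_{1}$'' is exactly your explicit solving of $h_1=-\tfrac{1}{\lambda}\varphi_1$, $h_2=\varphi_2$). Your additional remarks on the bijectivity of $\Gamma_0\uphar\wh \sN_{\lambda}(S^*)$ via $\rho(H)=\dC\setminus\{0\}$ and on why $Q$ is needed for $M(\lambda)$ but not for $\gamma(\lambda)$ are sound and consistent with the paper.
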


\begin{proof}
Recall that for any $\lambda \in \dC$ one has that
\[
 \wh \sN_{\lambda}(S^*)
 = \left\{ \,\left\{\begin{pmatrix} h_{1}  \\ h_{2} \end{pmatrix},
 \begin{pmatrix} k_{1}  \\ k_{2} \end{pmatrix} \right\}:
 \,\{h_{2}, k_{1}\} \in R^*, \,\, \,k_{1}=\lambda h_{1}, \,\, k_{2}=\lambda h_{2}\right\}.
\]
Hence, for the elements in $\wh \sN_{\lambda}(S^*)$
it follows from \eqref{bt} that
\[
 \Gamma_{0} \left\{\begin{pmatrix} h_{1}  \\ h_{2} \end{pmatrix},
 \begin{pmatrix} k_{1}  \\ k_{2} \end{pmatrix} \right\}
 =\begin{pmatrix} -\lambda h_{1}  \\ h_{2} \end{pmatrix},
  \quad
  \Gamma_{1} \left\{\begin{pmatrix} h_{1}  \\ h_{2} \end{pmatrix},
 \begin{pmatrix} k_{1}  \\ k_{2} \end{pmatrix} \right\}
 =Q  \begin{pmatrix} h_{1} \\ \lambda h_{2} \end{pmatrix}.
\]
Therefore, by definition,  the graph of the Weyl function $M$ is given by
\[
 M(\lambda)=\left\{ \left\{ \begin{pmatrix} -\lambda h_{1} \\ h_{2} \end{pmatrix},
 Q \begin{pmatrix} h_{1} \\ \lambda h_{2} \end{pmatrix} \right\} :\,
 \{h_{2}, \lambda h_{1} \} \in R^{*}\,\right\},
\]
or, equivalently, replacing $-\lambda h_{1}$ by $h_{1}$,
\[
 M(\lambda)=\left\{ \left\{ \begin{pmatrix} h_{1} \\ h_{2} \end{pmatrix},
 Q \begin{pmatrix} -\frac{1}{\lambda} h_{1} \\ \lambda h_{2} \end{pmatrix} \right\} :\,
 \{h_{2}, - h_{1} \} \in R^{*}\,\right\}.
\]
Likewise, by definition, the graph of the $\gamma$-field is given by
\[
 \gamma(\lambda)
 =\left\{ \left\{ \begin{pmatrix} -\lambda h_{1} \\ h_{2} \end{pmatrix},
 \begin{pmatrix} h_{1} \\ h_{2} \end{pmatrix} \right\} :\,
 \{h_{2}, \lambda h_{1} \} \in R^{*}\,\right\},
\]
or, equivalently, replacing $-\lambda h_{1}$ by $h_{1}$,
\[
 \gamma(\lambda)=\left\{ \left\{ \begin{pmatrix} h_{1} \\ h_{2} \end{pmatrix},
 \begin{pmatrix} -\frac{1}{\lambda} h_{1} \\ h_{2} \end{pmatrix} \right\} :\,
 \{h_{2}, - h_{1} \} \in R^{*}\,\right\}.
\]
This completes the proof.
\end{proof}

The structure of the Weyl function $M$ in \eqref{gamm} gives the following
result immediately.

\begin{corollary}\label{cor4.4}
The Weyl function $M$ satisfies the weak identity
\[
 \left( M(\lambda)  \begin{pmatrix} h_{1} \\ h_{2} \end{pmatrix},
  \begin{pmatrix} h_{1} \\ h_{2} \end{pmatrix} \right)
=  -\frac{1}{\lambda} \,(h_{1}, h_{1})+ \lambda \,(h_{2}, h_{2}),
\quad   \begin{pmatrix} h_{1} \\  h_{2}  \end{pmatrix} \in \cG,
\]
where $\lambda \in \dC \setminus \{0\}$.
\end{corollary}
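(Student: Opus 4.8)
The plan is to evaluate the closed-form expression for the Weyl function obtained in Theorem~\ref{WEYL} on a vector in $\cG$ and then to exploit the fact that $Q$ is the orthogonal projection onto $\cG$. First I would observe that, for $\begin{pmatrix} h_1 \\ h_2 \end{pmatrix} \in \cG$, the formula \eqref{gamm} gives directly
\[
 M(\lambda) \begin{pmatrix} h_1 \\ h_2 \end{pmatrix}
 = Q \begin{pmatrix} -\frac{1}{\lambda}\, h_1 \\ \lambda\, h_2 \end{pmatrix},
 \quad \lambda \in \dC \setminus \{0\}.
\]

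The only point that needs any care is the transfer of the projection $Q$ from one entry of the inner product to the other. Since $Q$ is an orthogonal projection it is selfadjoint, and since the vector $\begin{pmatrix} h_1 \\ h_2 \end{pmatrix}$ already lies in $\cG = \ran Q$ one has $Q \begin{pmatrix} h_1 \\ h_2 \end{pmatrix} = \begin{pmatrix} h_1 \\ h_2 \end{pmatrix}$. Combining these two facts yields
\[
 \left( M(\lambda) \begin{pmatrix} h_1 \\ h_2 \end{pmatrix},
 \begin{pmatrix} h_1 \\ h_2 \end{pmatrix} \right)
 = \left( \begin{pmatrix} -\frac{1}{\lambda}\, h_1 \\ \lambda\, h_2 \end{pmatrix},
 Q \begin{pmatrix} h_1 \\ h_2 \end{pmatrix} \right)
 = \left( \begin{pmatrix} -\frac{1}{\lambda}\, h_1 \\ \lambda\, h_2 \end{pmatrix},
 \begin{pmatrix} h_1 \\ h_2 \end{pmatrix} \right).
\]

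Finally I would expand this inner product componentwise with respect to the orthogonal decomposition $\sH = \sH_1 \oplus \sH_2$, which gives $-\frac{1}{\lambda}(h_1, h_1) + \lambda (h_2, h_2)$, exactly the asserted weak identity. There is no real obstacle here: the entire content of the computation is the selfadjointness of $Q$ together with $Q$ acting as the identity on $\cG$, and the scalars $-1/\lambda$ and $\lambda$ being pulled out of the first argument of the inner product. If one wished to avoid the projection altogether, one could alternatively read the identity off the graph description of $M(\lambda)$ displayed in the proof of Theorem~\ref{WEYL}, but the argument above is the most economical.
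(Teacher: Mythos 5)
Your proposal is correct and matches the paper's reasoning: the paper states that Corollary \ref{cor4.4} follows immediately from the structure of $M$ in \eqref{gamm}, and your computation (selfadjointness of $Q$ plus $Q$ acting as the identity on $\cG$, then expanding componentwise) is precisely the argument being left implicit there.
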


In particular, the identity holds for $\lambda < 0$,
so that $\lambda \mapsto M(\lambda)$
 is a nondecreasing function on $(-\infty,0)$.
 The limits
$M(-\infty)$ and $M(0)$ exist in the strong resolvent sense.
Their particular form can be found via
the asymptotic behavior of $M$ near $\lambda=-\infty$ and near $\lambda=0$.\\

The boundary triplet in Theorem \ref{BT} can be used to parametrize all
selfadjoint extensions of $S$ in \eqref{tstar0}. In fact,
the selfadjoint extensions $A$ of $S$ are in one-to-one correspondence with
the selfadjoint relations $\Theta$ in $\cG$, via
\begin{equation}\label{btbtnew}
A_\Theta=\ker ( \Gamma_1 -\Theta \Gamma_0),
\end{equation}
i.e., in other words
\begin{equation}\label{btbt}
A_\Theta
 =\left\{\, \left\{\begin{pmatrix} h_{1}  \\ h_{2} \end{pmatrix},
 \begin{pmatrix} k_{1}  \\ k_{2} \end{pmatrix} \right\} :\,
  \{h_{2}, k_{1}\} \in R^*,
\,
\left\{ \begin{pmatrix} -k_{1}  \\ h_{2} \end{pmatrix},
Q  \begin{pmatrix} h_{1} \\ k_{2} \end{pmatrix} \right\} \in \Theta
 \right\}.
\end{equation}
In particular, the relation $\Theta=\{0\} \times \cG $   is selfadjoint in $\cG$
and corresponds to the selfadjoint extension $H=\ker \Gamma_{0}$ in \eqref{hh}.
Likewise, the relation $\Theta =\cG \times \{0\}$, i.e., $\Theta =0$, is
selfadjoint in $\cG$ and
corresponds to the selfadjoint extension given by
\begin{equation}\label{kk}
 K=\left\{ \,\left\{\begin{pmatrix} h_{1}  \\ h_{2} \end{pmatrix},
 \begin{pmatrix} k_{1}  \\ k_{2} \end{pmatrix} \right\} :\,
 \{h_{1}, k_{2} \} \in R,\,\,  \{h_{2}, k_{1}\} \in R^*\, \right\},
\end{equation}
whose block representation is given by \eqref{symmkk};
cf. \eqref{symmkkk}. In general, the relation $K$ is not semibounded,
since $(k_2,h_2)=(h_1,k_1)$ implies
\[
 \left( \begin{pmatrix} k_{1}  \\ k_{2} \end{pmatrix},
 \begin{pmatrix} h_{1}  \\ h_{2} \end{pmatrix} \right)=(k_1,h_1)+(k_2,h_2)=2 \RE (k_1,h_1),
\]
which, in general, has no fixed sign.
It is clear from \eqref{tstar0}, \eqref{tstar},  \eqref{hh}, and \eqref{kk},
 that the selfadjoint extensions $H$ and $K$ are transversal, i.e.,
\[
S^{*}=H \hplus K,
\]
which, of course, agrees with the identities $H=\ker \Gamma_{0}$
and $K=\ker \Gamma_{1}$;
cf. \cite{DM}, \cite{BHS}.

\section{On nonnegative selfadjoint extensions of nonnegative relations}\label{mezzo}

Let $S$ be nonnegative relation in a Hilbert space $\sH$, in other words,
$(g,f) \geq 0$ for all $\{f,g\} \in S$.
Such a relation $S$ determines a nonnegative form $\ss$
on the domain $\dom \ss=\dom S$ via
\[
 \ss[f,g]=(f',g), \quad \{f,f'\},\, \{g,g'\}\in S.
\]
The form $\ss$ is closable, i.e., its closure $\overline{\ss}$ is a closed nonnegative form.
On the other hand, if $\st$ is a closed nonnegative form in a Hilbert space $\sH$, then
the first representation theorem asserts that there is a unique nonnegative selfadjoint
relation $H$ in $\sH$ such that $\st$ is the closure of the nonnegative form determined by $H$.
This one-to-one correspondence between closed nonnegative forms and nonnegative selfadjoint relations
in $\sH$ is indicated by $\st=\st_H$.
More precisely, $\st=\st_{H_{\rm s}}$, where $H_{\rm s}$ is the selfadjoint operator part of $H$ and
$\mul H=\sH\ominus\overline{\dom \st}$.

If $S$ is a nonnegative relation, then the closure of $\ss$ is a closed
nonnegative form $\st_{S_F}$ that corresponds to a nonnegative
selfadjoint extension $S_F$ of $S$, namely the Friedrichs extension of $S$.
Note that in the case that $S$ is selfadjoint, its so-called \textit{Friedrichs extension}
coincides with $S$. In general,
the Friedrichs extension $S_F$ of $S$ can be obtained by
\begin{equation}\label{frform}
 S_F=\{\, \{h,k\} \in S^*:\, h \in \dom \st_{S_F} \,\}.
\end{equation}
Since $S$ is nonnegative,  so is $S^{-1}$. Therefore, also
\begin{equation}\label{ando}
S_{K}=((S^{-1})_{F})^{-1}
\end{equation}
is a nonnegative selfadjoint extension of $S$, the so-called
\textit{Kre\u{\i}n-von Neumann extension}.
Thanks to \eqref{frform} (with $S$ replaced by $S^{-1}$)  and    \eqref{ando},
the Kre\u{\i}n-von Neumann extension $S_K$ of $S$ can be obtained by
\begin{equation}\label{krform}
 S_K=\{\, \{h,k\} \in S^*:\, k \in \dom \st_{(S^{-1})_F} \,\}.
\end{equation}

The Friedrichs extension and the Kre\u{\i}n-von Neumann extension
are extreme extensions in the following sense.
 If $A$ is nonnegative selfadjoint extension of $S$, then
 $S_K \leq A \leq S_K$, or, equivalently,
 \begin{equation}\label{kreinn}
(S_F+I)^{-1} \leq (A+I)^{-1} \leq (S_K+I)^{-1}.
\end{equation}
Conversely, if $A$ is a nonnegative selfadjoint relation
that satisfies \eqref{kreinn}, then $A$ is an extension, not only of $S$,
but also of the closed symmetric relation $S_0=S_F \cap S_K$ of $S$,
that is $S_0 \subset A$; cf. \cite[Theorem 5.4.6]{BHS}.
Consequently, the nonnegative selfadjoint extensions of $S$ and $S_0$ coincide.

Equivalent to the inequalities in \eqref{kreinn} is that the corresponding forms satisfy
\[
 \st_{S_K} \leq \st_A \leq \st_{S_F};
\]
cf. \cite{BHS}, where the last inequality actually means $\st_{S_F} \subset \st_A$.
A nonnegative selfadjoint extension $A$ of $S$ is said to be \textit{extremal} if
\begin{equation}\label{extr}
 (\st_{S_F} \subset )\,\, \st_{A} \subset \st_{S_K}.
\end{equation}
 It is known that a nonnegative selfadjoint extension $A$ of $S$ is extremal if and only if
\[
 \inf\,\big\{\,(f'-h',f-h):\, \{h,h'\} \in S\,\big\}=0 \quad \textrm{for all } \{f,f'\}\in A.
\]
cf.  \cite{ArTs88}.
For various equivalent conditions for extremality of $A$,
see also \cite{Ar3}, \cite{AHSS}, and further references in these papers.
 By the above definition, which uses the inclusion in $\st_{S_K}$ of the
associated closed forms, it is clear that the extremal extensions of $S$
are at the same time also extremal extensions of $S_0$ and, vice versa. \\

The case of present interest is where the numerical range of the symmetric relation
$S$ in $\sH$  is trivial: $\cW(S)=\{0\}$; see Section \ref{orth}.
Then the form $\ss$ determined by $S$ is trivial by Lemma \ref{numranzero}:
\[
 \ss[f,g]=(f',g)=0, \quad \{f,f'\},\, \{g,g'\}\in S.
\]
In particular, the form topology coincides with the Hilbert space topology.
Then the closure $\st_{S_F}$ of $\st_S$ satisfies
\[
\st_{S_F}=0, \quad \dom \st_{S_F}=\cdom S.
\]
Therefore, the Friedrichs extension $S_F$ of $S$ is given by
\begin{equation}\label{fr}
  S_F=\{\, \{h,k\} \in S^*:\, h \in \cdom S \,\};
\end{equation}
cf. \eqref{frform}. Likewise, since also $\cW(S^{-1})=\{0\}$,
it follows from \eqref{ando} that
\begin{equation}\label{kr}
  S_K=\{\, \{h,k\} \in S^*:\, k \in \cran S \,\}.
\end{equation}
Now let $A$ be a nonnegative selfadjoint extension of $S$ such that
$\cW(A)=\{0\}$, which clearly implies that $\cW(S)=\{0\}$.
Then the corresponding form $\st_A$ is trivial with closed
domain $\dom A$ that contains $\cdom S$.

\begin{lemma}\label{extremal0}
Let $S$ be nonnegative relation in a Hilbert space $\sH$ and assume that
$\cW(S_K)=\{0\}$. Then for a nonnegative selfadjoint extension
$A$ of $S$ the following conditions are equivalent:
\begin{enumerate} \def\labelenumi{\rm(\roman{enumi})}
\item $A$ is an extremal extension of $S$;
\item $\cW(A)=\{0\}$.
\end{enumerate}
\end{lemma}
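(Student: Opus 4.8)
The plan is to translate both conditions into a single statement about the closed form $\st_A$ associated with $A$ --- namely that it is the zero form --- and then to read off the equivalence from the hypothesis $\cW(S_K)=\{0\}$ together with the ordering \eqref{kreinn}. As a preliminary remark, since $S\subset S_K$ the monotonicity of the numerical range under inclusion gives $\cW(S)\subseteq\cW(S_K)=\{0\}$, so $S$ automatically lies in the class of Section~\ref{orth}; in particular $\st_{S_F}=0$, although only $\st_{S_K}$ is used below.

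First I would set up a form-theoretic dictionary for a trivial numerical range. For a nonnegative selfadjoint relation $T$, Lemma~\ref{numranzero} gives $\cW(T)=\{0\}\Leftrightarrow\dom T\perp\ran T$; writing $T=T_{\rm s}\oplus(\{0\}\times\mul T)$ with $\mul T=(\cdom T)^\perp$, the orthogonality $\dom T\perp\ran T$ is equivalent to $T_{\rm s}=0$, that is, to $\st_T=0$ on $\dom\st_T=\cdom T$. Applying this to $T=S_K$ turns the hypothesis into $\st_{S_K}=0$, and applying it to $T=A$ shows (as already observed before the lemma) that $\cW(A)=\{0\}$ holds exactly when $\st_A=0$.

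With this dictionary the two implications are short. For (ii)$\Rightarrow$(i): if $\cW(A)=\{0\}$ then $\st_A=0$ on $\dom\st_A=\cdom A$, while the ordering of nonnegative extensions \eqref{kreinn} yields $\st_{S_K}\le\st_A$, whose form-domain content is the inclusion $\dom\st_A\subseteq\dom\st_{S_K}$. Since $\st_A$ and $\st_{S_K}$ both vanish, they agree on $\dom\st_A$, so $\st_A\subset\st_{S_K}$, which is precisely extremality in the sense of \eqref{extr}. For (i)$\Rightarrow$(ii): extremality $\st_A\subset\st_{S_K}$ means $\st_A$ is the restriction of $\st_{S_K}$ to $\dom\st_A$; as $\st_{S_K}=0$ this forces $\st_A=0$, hence $\cW(A)=\{0\}$ by the dictionary.

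I expect no analytic obstacle here; the one point needing care is directional. Extremality in \eqref{extr} is a form \emph{inclusion} $\st_A\subset\st_{S_K}$ (the domain inclusion $\dom\st_A\subseteq\dom\st_{S_K}$ together with agreement of values), whereas the general comparison \eqref{kreinn} is a form \emph{inequality} $\st_{S_K}\le\st_A$ (the same domain inclusion, but an inequality of values). The hypothesis $\cW(S_K)=\{0\}$ collapses $\st_{S_K}$, and hence on the relevant subspace $\st_A$, to zero, so the agreement of values becomes automatic and the whole equivalence reduces to the single domain inclusion $\dom\st_A\subseteq\dom\st_{S_K}$ furnished by \eqref{kreinn}. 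Keeping these two orderings of forms straight is the only real bookkeeping.
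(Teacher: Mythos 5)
Your proof is correct and takes essentially the same route as the paper: both use Lemma \ref{numranzero} to convert the hypothesis $\cW(S_K)=\{0\}$ (and condition (ii) for $A$) into the vanishing of the associated closed forms, then obtain (i)$\Rightarrow$(ii) from the form inclusion \eqref{extr} and (ii)$\Rightarrow$(i) from the form-domain inclusion supplied by $S_K\le A$ in \eqref{kreinn}. Your explicit ``dictionary'' and the closing remark distinguishing form inclusion from form inequality are simply a more detailed writing-out of the same bookkeeping the paper does in brief.
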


\begin{proof}
The assumption about $S_K$ shows that $\dom S_K \perp\ran S_K$.
Hence the closed form $\st_{S_K}$ corresponding to $S_K$
is the zero form on the closed domain $\dom S_K$.

(i) $\Rightarrow$ (ii)
Let $A$ be an extremal extension of $S$. Then by \eqref{extr}
one has $\st_A \subset \st_{S_K}$. Hence $\st_A$ is the zero form
on $\dom \st_A$. In particular, it follows that  $\cW(A)=\{0\}$.

(ii) $\Rightarrow$ (i)
Assume that $\cW(A)=\{0\}$, so that
the closed form generated by $A$ is the zero form on its necessarily closed domain.
 By the inequality $S_K \leq A$ one has
 $\dom \st_{A}\subset \dom \st_{S_K}$ and hence as
a zero form $\st_{A}$ is a closed restriction of the form $\st_{S_K}$, i.e., it satisfies \eqref{extr}. Hence $A$ is an extremal extension of $S$.
\end{proof}

\section{Explicit description of all nonnegative selfadjoint extensions}\label{Fried}

This section contains formulas for the  Friedrichs and
Kre\u{\i}n-von Neumann extensions
of $S$ in \eqref{tstar0}. As, in general,
they are not transversal as extensions of $S$,
the closed symmetric extension $S_F \cap S_K$ of $S$
will be used as the underlying symmetric extension
for an alternative boundary triplet.
First, the Friedrichs extension $S_F$ of $S$ will be determined.

\begin{lemma}\label{friedrichs}
Let $R$ be a closed linear relation from $\sH_1$ to $\sH_2$ and let $S$
be the  relation defined in  \eqref{tstar0}.
Then the Friedrichs extension $S_{F}$ of $S$ is given by
\begin{equation}\label{friedfor}
S_{F} =( \cdom R \oplus  \{0\})^\top
 \times ( \mul R^{*} \oplus \sH_2)^\top.
 \end{equation}
\end{lemma}

\begin{proof}
Observe from the definition of $S$ in \eqref{tstar0} that  $\cW(S)=\{0\}$ and that
\[
 \cdom S= (\cdom R \oplus \{0\})^\top.
\]
Then, thanks to \eqref{fr}, one sees that
\[
S_F=\left\{ \left\{\begin{pmatrix} h_{1}  \\ h_{2} \end{pmatrix},
 \begin{pmatrix} k_{1}  \\ k_{2} \end{pmatrix} \right\} \in S^*:\,
  h_{1} \in \cdom R, \, h_{2}=0
   \, \right\}.
\]
Hence, it follows from \eqref{tstar} that \eqref{friedfor} holds.
\end{proof}

Next,  the Kre\u{\i}n-von Neumann extension $S_{K}$
will be determined in a similar way.

\begin{lemma}\label{krein}
Let $R$ be a closed linear relation from $\sH_1$ to $\sH_2$ and let $S$
be the   relation defined in  \eqref{tstar0}.
Then the Kre\u{\i}n-von Neumann extension $S_{K}$ of $S$ is given by
\begin{equation}\label{kreinfor}
S_{K} =(  \sH_1 \oplus \ker R^{*})^\top
 \times ( \{0\}  \oplus \cran R )^\top.
 \end{equation}
\end{lemma}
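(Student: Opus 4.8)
The plan is to mirror the proof of Lemma~\ref{friedrichs}, using the characterization \eqref{kr} of the Kre\u{\i}n--von Neumann extension that is valid in the present case $\cW(S)=\{0\}$. Since the orthogonality $\dom S\perp\ran S$ is inherited by $S^{-1}$, one also has $\cW(S^{-1})=\{0\}$, so \eqref{kr} applies and reduces everything to identifying $\cran S$ and then intersecting the condition $k\in\cran S$ with the explicit form \eqref{tstar} of $S^*$. First I would read off from \eqref{tstar0} that
\[
\ran S=\left\{\begin{pmatrix}0\\ g_2\end{pmatrix}:\ g_2\in\ran R\right\},
\qquad\mbox{so that}\qquad
\cran S=(\{0\}\oplus\cran R)^\top .
\]
Hence the defining condition $k\in\cran S$ in \eqref{kr} becomes $k_1=0$ together with $k_2\in\cran R$ for a general element $\{(h_1,h_2)^\top,(k_1,k_2)^\top\}$ of $S^*$.

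The key step is to feed this back into the description \eqref{tstar} of $S^*$, where such an element is subject to $h_1\in\sH_1$, $\{h_2,k_1\}\in R^*$, and $k_2\in\sH_2$. Imposing $k_1=0$ together with $\{h_2,k_1\}\in R^*$ forces $\{h_2,0\}\in R^*$, i.e.\ $h_2\in\ker R^*$, while $h_1\in\sH_1$ remains free and $k_2\in\cran R$ is the only surviving constraint on the second component. Collecting these conditions gives
\[
S_K=\left\{\left\{\begin{pmatrix}h_1\\ h_2\end{pmatrix},\begin{pmatrix}0\\ k_2\end{pmatrix}\right\}:\ h_1\in\sH_1,\ h_2\in\ker R^*,\ k_2\in\cran R\right\},
\]
which is exactly $(\sH_1\oplus\ker R^*)^\top\times(\{0\}\oplus\cran R)^\top$, as claimed.

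I do not expect a serious obstacle, since the argument is the exact dual of the Friedrichs computation; the only point needing care is the implication $k_1=0\Rightarrow h_2\in\ker R^*$, which uses the defining membership $\{h_2,k_1\}\in R^*$ in \eqref{tstar} rather than any separate property of $R^*$. As an alternative one could instead start from $S_K=((S^{-1})_F)^{-1}$ in \eqref{ando} and apply Lemma~\ref{friedrichs} to $S^{-1}$, which has the same orthogonal structure with the roles of $\sH_1$ and $\sH_2$ interchanged; but the direct route via \eqref{kr} is shorter and keeps the bookkeeping of the two components transparent.
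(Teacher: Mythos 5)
Your proposal is correct and follows essentially the same route as the paper: both verify $\cW(S^{-1})=\{0\}$, identify $\cran S=(\{0\}\oplus\cran R)^\top$, apply \eqref{kr}, and intersect the resulting conditions $k_1=0$, $k_2\in\cran R$ with the description \eqref{tstar} of $S^*$. Your explicit observation that $k_1=0$ together with $\{h_2,k_1\}\in R^*$ forces $h_2\in\ker R^*$ is precisely the step the paper leaves implicit in its final sentence.
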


\begin{proof}
Observe from the definition of $S$ in \eqref{tstar0}
that $\cW(S^{-1})=\{0\}$ and
\[
 \cran S= ( \{0\} \oplus \cran R)^\top.
\]
Then, thanks to \eqref{kr}, one sees that
\[
S_K=\left\{ \left\{\begin{pmatrix} h_{1}  \\ h_{2} \end{pmatrix},
 \begin{pmatrix} k_{1}  \\ k_{2} \end{pmatrix} \right\} \in S^*:\,
 k_1=0, \, k_2 \in \cran R \, \right\}.
 \]
Hence, it follows from \eqref{tstar} that \eqref{kreinfor} holds.
\end{proof}

It is clear from Lemma \ref{krein} that $\dom S_{K}\perp \ran S_{K}$ or, equivalently,
$\cW(S_K)=\{0\}$; see Lemma \ref{numranzero}. Hence from Lemma \ref{extremal0}
one obtains the following characterization for extremal extensions of $S$.

\begin{corollary}\label{extremal}
Let $S$ be the relation defined in \eqref{tstar0}.
Then the Kre\u{\i}n-von Neumann extension $S_{K}$ of $S$
satisfies $\cW(S_K)=\{0\}$ and for a nonnegative selfadjoint extension
$A$ of $S$ the following conditions are equivalent:
\begin{enumerate} \def\labelenumi{\rm(\roman{enumi})}
\item $A$ is an extremal extension of $S$;
\item $\cW(A)=\{0\}$.
\end{enumerate}
\end{corollary}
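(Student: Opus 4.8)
The plan is to obtain this corollary by specializing the abstract equivalence in Lemma~\ref{extremal0} to the concrete relation $S$ in \eqref{tstar0}, after first verifying that the hypothesis $\cW(S_K)=\{0\}$ of that lemma is automatically satisfied here. Since $\cW(S)=\{0\}$ guarantees that $S$ is nonnegative (as noted following Lemma~\ref{numranzero}), the Kre\u{\i}n--von Neumann extension and the whole form-theoretic framework of Section~\ref{mezzo} are available, and the two conditions in the corollary even make sense.

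First I would pin down $\dom S_K$ and $\ran S_K$ directly from the explicit block form of $S_K$ supplied by Lemma~\ref{krein}, namely
\[
 S_K = (\sH_1 \oplus \ker R^*)^\top \times (\{0\} \oplus \cran R)^\top,
\]
so that $\dom S_K = (\sH_1 \oplus \ker R^*)^\top$ and $\ran S_K = (\{0\} \oplus \cran R)^\top$. To show these are orthogonal, I would take a vector in the domain and a vector in the range; their inner product in $\sH_1\oplus\sH_2$ collapses to $(h_2,k_2)$ with $h_2 \in \ker R^*$ and $k_2 \in \cran R$. The standard identity $\ker R^* = (\cran R)^\perp$ for the closed relation $R$ then forces $(h_2,k_2)=0$, whence $\dom S_K \perp \ran S_K$. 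Applying Lemma~\ref{numranzero} to $S_K$ converts this orthogonality into $\cW(S_K)=\{0\}$, which is the first assertion of the corollary.

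With $\cW(S_K)=\{0\}$ in hand, the hypotheses of Lemma~\ref{extremal0} are met, and the equivalence of (i) and (ii) follows at once by invoking that lemma. I expect no genuine obstacle: the whole argument amounts to reading the domain and range off Lemma~\ref{krein}, carrying out the one-line orthogonality check, and citing the two lemmas. If anything needs care it is only the bookkeeping behind $\ker R^* = (\cran R)^\perp$, which is the elementary fact that $g_2\in\ker R^*$ exactly when $(g_2,f_2)=0$ for every $\{f_1,f_2\}\in R$; everything else is a direct appeal to earlier results.
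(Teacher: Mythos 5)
Your proposal is correct and follows the same route as the paper: the paper also reads $\dom S_K$ and $\ran S_K$ off Lemma~\ref{krein}, notes $\dom S_K \perp \ran S_K$ (your explicit check via $\ker R^* = (\cran R)^\perp$ is exactly the "clear" step the paper leaves implicit), converts this to $\cW(S_K)=\{0\}$ by Lemma~\ref{numranzero}, and then invokes Lemma~\ref{extremal0}.
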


The Friedrichs and the Kre\u{\i}n-von Neumann extensions are selfadjoint
extensions of $S$, which are both singular. According to Corollary \ref{write},
there are the block representations
\begin{equation}\label{friedforr}
S_F=\begin{pmatrix} \cdom R \times \mul R^* & \{0\} \times \mul R^* \\
\cdom R \times \sH_2 & \{0\} \times \sH_2 \end{pmatrix}
=\begin{pmatrix} \sH_1 \times \{0\} & \{0\} \times \mul R^* \\
\cdom R \times \sH_2 & \{0\} \times \sH_2 \end{pmatrix},
\end{equation}
cf. Remark \ref{mulmat},
and, likewise,
\[
S_{K} =\begin{pmatrix} \sH_1\times \{0\} & \ker R^{*}\times \{0\}  \\
 \sH_1 \times \cran R  & \ker R^* \times \cran R \end{pmatrix}.
\]
The Friedrichs and the Kre\u{\i}n-von Neumann extensions
have the same lower bound.
It may happen that the Friedrichs and
Kre\u{\i}n-von Neumann extensions of $S$ coincide.
The following statement is clear
from Lemma \ref{friedrichs} and Lemma \ref{krein}.

\begin{corollary}\label{PropFrasA0}
Let $R$ be a closed linear relation from $\sH_1$ to $\sH_2$
and let $S$ be the relation defined in  \eqref{tstar0}.
The following statements are equivalent:
\begin{enumerate} \def\labelenumi{\rm(\roman{enumi})}
\item $S_{F}=S_{K}$;
\item $\cdom R=\sH_1$ and $\cran R=\sH_2$.
\end{enumerate}
\end{corollary}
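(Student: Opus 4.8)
The plan is to read $S_F$ and $S_K$ off the explicit formulas in Lemma~\ref{friedrichs} and Lemma~\ref{krein} and simply compare them. Both extensions are presented as Cartesian products of closed subspaces of $\sH=\sH_1\oplus\sH_2$, namely
\[
 S_F=(\cdom R\oplus\{0\})^\top\times(\mul R^*\oplus\sH_2)^\top,
 \qquad
 S_K=(\sH_1\oplus\ker R^*)^\top\times(\{0\}\oplus\cran R)^\top.
\]
A relation of the form $\cM\times\cN$ is completely determined by its domain $\cM$ and its range $\cN$, so that $\cM_1\times\cN_1=\cM_2\times\cN_2$ holds precisely when $\cM_1=\cM_2$ and $\cN_1=\cN_2$. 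Hence $S_F=S_K$ is equivalent to the two identities $\dom S_F=\dom S_K$ and $\ran S_F=\ran S_K$, and the whole statement reduces to a comparison of factors.

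For the implication (i)$\Rightarrow$(ii) it already suffices to compare domains. Equality of $\dom S_F=(\cdom R\oplus\{0\})^\top$ and $\dom S_K=(\sH_1\oplus\ker R^*)^\top$ inside $\sH_1\oplus\sH_2$ forces the $\sH_2$-component of the second subspace to vanish, i.e.\ $\ker R^*=\{0\}$, and then matching the $\sH_1$-components gives $\cdom R=\sH_1$. Using the standard adjoint identities $\mul R^*=(\cdom R)^\perp$ and $\ker R^*=(\cran R)^\perp$, the condition $\ker R^*=\{0\}$ is the same as $\cran R=\sH_2$. This yields exactly $\cdom R=\sH_1$ and $\cran R=\sH_2$, which is (ii).

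For the converse (ii)$\Rightarrow$(i), assume $\cdom R=\sH_1$ and $\cran R=\sH_2$. Then $\mul R^*=(\cdom R)^\perp=\{0\}$ and $\ker R^*=(\cran R)^\perp=\{0\}$, and substituting these into the formulas above gives
\[
 S_F=(\sH_1\oplus\{0\})^\top\times(\{0\}\oplus\sH_2)^\top=S_K,
\]
which is precisely the selfadjoint relation $H$ in \eqref{hh}. There is no genuine obstacle here: the only points deserving a word of care are the observation that a Cartesian product of subspaces is recovered from its domain and range (so that comparing the two factors is legitimate), and the translation of the conditions on $\mul R^*$ and $\ker R^*$ into conditions on $\cdom R$ and $\cran R$ via the adjoint identities. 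One may also note that the range comparison $\ran S_F=\ran S_K$ reproduces the same two conditions, consistent with the domain comparison.
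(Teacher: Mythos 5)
Your proof is correct and follows essentially the same route as the paper, which simply declares the equivalence ``clear'' from the explicit product formulas in Lemma~\ref{friedrichs} and Lemma~\ref{krein}; you have just made the comparison of the factors explicit, together with the standard identities $\mul R^*=(\cdom R)^\perp$ and $\ker R^*=(\cran R)^\perp$. No gaps.
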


It follows from the above representations \eqref{friedfor}
and \eqref{kreinfor}
that the nonnegative selfadjoint extensions $S_F$ and
$S_K$ of $S$ satisfy
\[
 S_{F} \cap S_{K}=( \cdom R \oplus \{0\})^\top
 \times ( \{0\}  \oplus \cran R)^\top.
 \]
Thus  $S_{F}$ and $S_{K}$ are disjoint if and only if the
relation $R$ is singular. In the opposite case, $S_F$ and $S_K$ are
not disjoint and so not transversal.
Now introduce the following symmetric extension of $S$:
\begin{equation}\label{SFcapSK}
 S_0=S_{F} \cap S_{K}=( \cdom R \oplus \{0\})^\top
 \times ( \{0\}  \oplus \cran R)^\top.
 \end{equation}
Then, by definition, $S_F$ and $S_K$ are disjoint
as selfadjoint extensions of $S_0$.
It is known that the nonnegative selfadjoint extensions of $S$
and $S_0$ coincide; cf. Section \ref{Fried}.
The following lemma shows that $S_F$ and $S_K$ are transversal
extensions of $S_0$.

\begin{lemma}
The adjoint of the symmetric relation $S_0$
in \eqref{SFcapSK} is given by
\begin{equation}\label{S0star}
 S_0^*
 =\left\{\, \left\{\begin{pmatrix} h_{1}  \\ h_{2} \end{pmatrix},
 \begin{pmatrix} k_{1}  \\ k_{2} \end{pmatrix} \right\} :\,
 \begin{matrix} h_{1} \in \sH_{1}, &  k_{1}\in \mul R^* \\
h_{2} \in \ker R^* , & k_{2} \in \sH_{2} \end{matrix}\,  \right\}
\end{equation}
and it satisfies the equality $S_0^*=S_F \hplus S_K$.
\end{lemma}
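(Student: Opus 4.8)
The plan is to treat the two assertions in turn, exploiting that $S_0$, $S_F$, and $S_K$ are all singular relations of product type $\cM \times \cN$ with $\cM,\cN$ closed subspaces of $\sH=\sH_1\oplus\sH_2$, and to read off everything from the product representations already established in \eqref{SFcapSK}, Lemma \ref{friedrichs}, and Lemma \ref{krein}.

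First I would compute $S_0^*$ directly. Writing $S_0=\cM\times\cN$ with $\cM=(\cdom R\oplus\{0\})^\top$ and $\cN=(\{0\}\oplus\cran R)^\top$, I recall the elementary fact that for a relation of this form one has $(\cM\times\cN)^*=\cN^\perp\times\cM^\perp$: indeed $\{h,k\}$ lies in the adjoint exactly when $(k,m)=(h,n)$ for all $m\in\cM$ and $n\in\cN$, and letting $m$ and $n$ run independently (taking one of them zero at a time) this splits into $k\perp\cM$ and $h\perp\cN$. It then remains only to identify the complements. Using $(\cdom R)^\perp=(\dom R)^\perp=\mul R^*$ gives $\cM^\perp=(\mul R^*\oplus\sH_2)^\top$, and using $(\cran R)^\perp=(\ran R)^\perp=\ker R^*$ gives $\cN^\perp=(\sH_1\oplus\ker R^*)^\top$. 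Hence $S_0^*=\cN^\perp\times\cM^\perp$, which is precisely \eqref{S0star}.

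For the equality $S_0^*=S_F\hplus S_K$ I would substitute the product representations from Lemma \ref{friedrichs} and Lemma \ref{krein} and form the componentwise sum. A generic element of $S_F$ is $\{(a_1,0),(b_1,b_2)\}$ with $a_1\in\cdom R$, $b_1\in\mul R^*$, $b_2\in\sH_2$, and a generic element of $S_K$ is $\{(c_1,c_2),(0,d_2)\}$ with $c_1\in\sH_1$, $c_2\in\ker R^*$, $d_2\in\cran R$; their componentwise sum is $\{(a_1+c_1,\,c_2),(b_1,\,b_2+d_2)\}$. Since $c_1$ already exhausts $\sH_1$ and $b_2$ already exhausts $\sH_2$, the four entries run independently through $\sH_1$, $\ker R^*$, $\mul R^*$, and $\sH_2$, respectively, which is exactly \eqref{S0star}. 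Both inclusions are immediate: an arbitrary $\{(h_1,h_2),(k_1,k_2)\}\in S_0^*$ is realized by $a_1=0$, $c_1=h_1$, $c_2=h_2$, $b_1=k_1$, $b_2=k_2$, $d_2=0$. Finally, since $S_F\cap S_K=S_0$ by \eqref{SFcapSK}, the two extensions are transversal over $S_0$, which is what justifies writing the sum as $\hplus$.

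I expect no serious obstacle: both parts reduce to bookkeeping with the product representations. The only points needing genuine care are the two orthogonal-complement identities $(\cdom R)^\perp=\mul R^*$ and $(\cran R)^\perp=\ker R^*$, and verifying that the free parameters in the componentwise sum really do fill out all four target subspaces independently, so that one obtains equality rather than mere inclusion. As a cross-check, one can note the alternative route through the general identity $(S_F\cap S_K)^*=\overline{S_F^*+S_K^*}=\overline{S_F+S_K}$, valid because $S_F$ and $S_K$ are selfadjoint; since the computed sum $S_F+S_K$ is again a product of closed subspaces and hence closed, this simultaneously yields \eqref{S0star} and the transversal decomposition.
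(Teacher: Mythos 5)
Your proof is correct and follows essentially the same route as the paper: the paper computes $S_0^*$ as $JS_0^\perp$, which for a product relation $\cM\times\cN$ is exactly your identity $(\cM\times\cN)^*=\cN^\perp\times\cM^\perp$ combined with $(\dom R)^\perp=\mul R^*$ and $(\ran R)^\perp=\ker R^*$, and it likewise reads off $S_0^*=S_F\hplus S_K$ from the product representations \eqref{friedfor} and \eqref{kreinfor}. Your version merely makes explicit the componentwise-sum bookkeeping that the paper declares ``clear.''
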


\begin{proof}
The description of $S_0^*$ is obtained from \eqref{SFcapSK},
e.g., by means of the equality $S_0^*=J S_0^\perp$, which shows that
\[
 S_0^* =  ( \sH_1 \oplus \ker R^* )^\top
 \times ( \mul R^{*} \oplus \sH_2 )^\top;
 \]
cf. \eqref{decomp} and \eqref{decompl}.
The equality $S_0^*=S_F \hplus S_K$ is now clear from
the descriptions of $S_F$ in  \eqref{friedfor} and $S_K$ in \eqref{kreinfor}.
\end{proof}

 According to Corollary \ref{PropFrasA0} the equality $S_F=S_K$
 holds precisely when the subspace
\begin{equation}\label{G0}
 \cG_0=\mul R^* \times \ker R^* \subset \sH_1\times \sH_2
\end{equation}
is zero. In what follows it is assumed that $\cG_0\neq \{0\}$ and
all nonnegative selfadjoint extensions are described.
Observe, that $\cG_0\subset \cG=\sN_{-1}(S^*)$; see \eqref{hk00}.
First notice that for $\lambda \in \dC$ the eigenspace associated
with \eqref{S0star} is given by
\begin{equation}\label{hklS0}
 \wh \sN_{\lambda}(S_0^*)
 = \left\{ \,\left\{\begin{pmatrix} h_{1}  \\ h_{2} \end{pmatrix},
 \begin{pmatrix} k_{1}  \\ k_{2} \end{pmatrix} \right\}:\,
 \begin{matrix} k_{1}=\lambda h_{1}, &  h_{2} \in \ker R^* \\
 k_{2}=\lambda h_{2} , &   k_{1}\in \mul R^* \end{matrix} \,\right\}.
\end{equation}
In particular, for $\lambda\neq 0$ the eigenspace
$\sN_\lambda(S_0^*)=\ker (S_0^*-\lambda)$
has the form
\begin{equation}\label{hk0S0}
 \sN_{\lambda}(S_0^*)= \left\{ \begin{pmatrix} h_{1}  \\ h_{2} \end{pmatrix} :\,
 h_{1}\in \mul R^*, \, h_{2} \in \ker R^*  \,\right\}.
\end{equation}
Hence, $\sN_{\lambda}(S_0^*)=\cG_0\subset \cG$ for all $\lambda \neq 0$.
Let $Q_0$ be the orthogonal projection
from $\sH_1 \oplus \sH_2$ onto $\cG_0$,
i.e., $Q_0=P_{\mul R^*}\times P_{\ker R^*}$,
where $P_{\mul R^*}$ is the orthogonal projection
from $\sH_1$ onto $\mul R^*$ and
where $P_{\ker R^*}$ is the orthogonal projection
from $\sH_2$ onto $\ker R^*$. \\

In order to describe all nonnegative selfadjoint extensions of $S_0$,
it is convenient to construct a boundary triplet
$\{\cG_0,\Gamma^0_0,\Gamma^0_1\}$ for $S_0^*$
such that $S_F=\ker \Gamma^0_0$ and $S_K=\ker\Gamma^0_1$.
Such boundary triplets were introduced and studied by
Arlinski\u{\i} in \cite{Ar1} as a special case of so-called positive boundary triplets (also called positive boundary value spaces)
which were introduced earlier by Kochubei \cite{Koch} and used for describing nonnegative selfadjoint extensions of a nonnegative operator $S$
in the case when $0$ is a regular type point of $S$. The general case was treated also in \cite{DM87}.
A boundary triplet with $\ker \Gamma^0_0=S_F$ and $\ker\Gamma^0_1=S_K$ from \cite{Ar1} is often called a basic (positive) boundary triplet
(cf. \cite{AHSS}, \cite{BHS}).
Such a boundary triplet is convenient, since all nonnegative
selfadjoint extensions of $S_0$ can be parametrized
simply by means of nonnegative selfadjoint relations $\Theta$ in the (boundary) space $\cG_0$
(cf. Theorem \ref{BTS0op} below).

\begin{proposition}\label{BTS0}
Let the symmetric relation $S_0$ be defined by
\eqref{SFcapSK} with the adjoint \eqref{S0star}.
Let $Q_0$ be the orthogonal projection from
$\sH_1 \oplus \sH_2$ onto $\cG_0$. Then for
\[
 \left\{\begin{pmatrix} h_{1}  \\ h_{2} \end{pmatrix},
 \begin{pmatrix} k_{1}  \\ k_{2} \end{pmatrix} \right\} \in S_0^{*},
\]
define
\begin{equation}\label{btS0}
 \Gamma^0_0  \left\{\begin{pmatrix} h_{1}  \\ h_{2} \end{pmatrix},
 \begin{pmatrix} k_{1}  \\ k_{2} \end{pmatrix} \right\}
 =Q_0\begin{pmatrix} h_{1}  \\ h_{2} \end{pmatrix}
\quad \mbox{and} \quad
\Gamma^0_1  \left\{\begin{pmatrix} h_{1}  \\ h_{2} \end{pmatrix},
\begin{pmatrix} k_{1}  \\ k_{2} \end{pmatrix} \right\}
=Q_0  \begin{pmatrix} k_{1} \\ k_{2} \end{pmatrix}.
\end{equation}
Then $\{\cG_0, \Gamma^0_0, \Gamma^0_1\}$
is a boundary triplet for the relation $S_0^{*}$.
Furthermore, one has $\ker \Gamma^0_0=S_F$
and $\ker\Gamma^0_1=S_K$.
\end{proposition}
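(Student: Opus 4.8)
The plan is to verify the two defining properties of a boundary triplet for $S_0^*$, namely the abstract Green identity and the surjectivity of the joint boundary map $(\Gamma^0_0,\Gamma^0_1)$ onto $\cG_0\times\cG_0$; cf. \cite{BHS}, \cite{DM}. Once these are in place, the identifications $\ker\Gamma^0_0=S_F$ and $\ker\Gamma^0_1=S_K$ follow from a short computation with orthogonal complements. The whole argument is driven by the observation that the projection $Q_0=P_{\mul R^*}\times P_{\ker R^*}$ acts as the identity precisely on those components that are already constrained in the description \eqref{S0star} of $S_0^*$.

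For the Green identity I would take two elements of $S_0^*$,
\[
\left\{\begin{pmatrix} h_1 \\ h_2\end{pmatrix},\begin{pmatrix} k_1 \\ k_2\end{pmatrix}\right\}
\quad\text{and}\quad
\left\{\begin{pmatrix} f_1 \\ f_2\end{pmatrix},\begin{pmatrix} g_1 \\ g_2\end{pmatrix}\right\},
\]
so that by \eqref{S0star} one has $k_1,g_1\in\mul R^*$ and $h_2,f_2\in\ker R^*$. The left-hand side of the identity expands to $(k_1,f_1)+(k_2,f_2)-(h_1,g_1)-(h_2,g_2)$. For the right-hand side I would use that $Q_0$ is an orthogonal projection, so that $(Q_0 u,Q_0 v)=(Q_0 u,v)$, together with the membership conditions above. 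Since $k_1\in\mul R^*$ and $f_2\in\ker R^*$, the term $(\Gamma^0_1,\Gamma^0_0)$ collapses to $(k_1,f_1)+(k_2,f_2)$; and since $g_1\in\mul R^*$ and $h_2\in\ker R^*$, the term $(\Gamma^0_0,\Gamma^0_1)$ collapses to $(h_1,g_1)+(h_2,g_2)$. Subtracting reproduces the left-hand side exactly, which gives the abstract Green identity.

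Surjectivity is then immediate: for any pair $\{a,b\}$ with $a,b\in\cG_0=\mul R^*\times\ker R^*$, the element $\{a,b\}$ itself already lies in $S_0^*$ by \eqref{S0star}, and because $a,b\in\cG_0$ one has $\Gamma^0_0\{a,b\}=Q_0 a=a$ and $\Gamma^0_1\{a,b\}=Q_0 b=b$. Hence the joint map is onto $\cG_0\times\cG_0$, and $\{\cG_0,\Gamma^0_0,\Gamma^0_1\}$ is a boundary triplet for $S_0^*$.

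Finally, for the kernels, the condition $\Gamma^0_0=0$ forces $P_{\mul R^*}h_1=0$ and $P_{\ker R^*}h_2=0$; since $h_2\in\ker R^*$ the second equality gives $h_2=0$, while the first gives $h_1\in(\mul R^*)^\perp=\cdom R$. Comparing this with \eqref{friedfor} yields $\ker\Gamma^0_0=S_F$. Symmetrically, $\Gamma^0_1=0$ forces $k_1=0$ (as $k_1\in\mul R^*$) and $k_2\in(\ker R^*)^\perp=\cran R$, which by \eqref{kreinfor} gives $\ker\Gamma^0_1=S_K$. I do not expect a genuine obstacle here: the only thing requiring care is the bookkeeping of the projections $P_{\mul R^*}$ and $P_{\ker R^*}$ against the constraints built into $S_0^*$, all of which has effectively been prepared by the explicit descriptions \eqref{S0star}, \eqref{friedfor}, and \eqref{kreinfor}.
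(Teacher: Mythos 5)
Your proof is correct and follows essentially the same route as the paper's: the Green identity via projection bookkeeping for $Q_0$ against the constraints $k_1,g_1\in\mul R^*$ and $h_2,f_2\in\ker R^*$, surjectivity from the freedom of the four components in \eqref{S0star}, and the kernel identifications by comparison with \eqref{friedfor} and \eqref{kreinfor}. Your observation that $\cG_0\times\cG_0\subset S_0^*$ furnishes explicit preimages is a slightly neater phrasing of the paper's independence-of-components argument, and your kernel computations (using $(\mul R^*)^\perp=\cdom R$ and $(\ker R^*)^\perp=\cran R$) merely supply details the paper leaves to the definitions.
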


\begin{proof}
For general elements in $S_0^{*}$ of the form
\[
 \left\{\begin{pmatrix} h_{1}  \\ h_{2} \end{pmatrix},
 \begin{pmatrix} k_{1}  \\ k_{2} \end{pmatrix} \right\},
 \quad
 \left\{\begin{pmatrix} f_{1}  \\ f_{2} \end{pmatrix},
 \begin{pmatrix} g_{1}  \\ g_{2} \end{pmatrix} \right\},
\]
with $k_1,g_1\in\mul R^*$ and $h_2,f_2\in\ker R^*$
one has the Green identity
\[
\begin{split}
&\left( \begin{pmatrix} k_{1} \\ k_{2} \end{pmatrix} ,
\begin{pmatrix} f_{1}  \\ f_{2} \end{pmatrix} \right)
- \left( \begin{pmatrix} h_{1}  \\ h_{2} \end{pmatrix},
\begin{pmatrix} g_{1}  \\ g_{2} \end{pmatrix} \right)  \\
  &\hspace{1cm} =
                 (k_{1},f_{1}) +(k_{2},f_{2}) - (h_{1} , g_{1}) - (h_{2}, g_{2})  \\
 &\hspace{1cm} =
                (P_{\mul R^*}k_{1},f_{1}) +(k_{2},P_{\ker R^*}f_{2})
                - (h_{1} , P_{\mul R^*}g_{1}) - (P_{\ker R^*}h_{2}, g_{2})  \\
&\hspace{1cm} =
        \left( Q_0\begin{pmatrix} k_{1} \\ k_{2} \end{pmatrix} ,
        Q_0\begin{pmatrix} f_{1}  \\ f_{2} \end{pmatrix} \right)
       - \left( Q_0\begin{pmatrix} h_{1}  \\ h_{2} \end{pmatrix},
       Q_0\begin{pmatrix} g_{1}  \\ g_{2} \end{pmatrix} \right).
\end{split}
\]
Thus the abstract Green identity holds with the mappings
$\Gamma^0_{0}$ and $\Gamma^0_{1}$ in \eqref{btS0}.

Furthermore, in the definition of $S_0^{*}$ the elements
$h_{1} \in \sH_1$ and $h_{2} \in \ker R^*$ are arbitrary
and independent from the choice
of the elements $k_{1}\in\mul R^*$ and $k_{2}\in \sH_2$.
Hence, the pair of mappings
$(\Gamma^0_{0},\Gamma^0_{1})$ takes $S_0^{*}$
onto $\cG_0\times\cG_0$.
Consequently, $\{ \cG_0, \Gamma^0_0, \Gamma^0_1\}$
is a boundary triplet for $S_0^*$.

The identities $\ker \Gamma^0_0=S_F$ and $\ker\Gamma^0_1=S_K$
follow from the definitions in \eqref{btS0} and the descriptions of $S_F$
in \eqref{friedfor} and $S_K$ in \eqref{kreinfor}, respectively.
\end{proof}

The next result gives the $\gamma$-field and the Weyl function
corresponding to the boundary triplet
$\{\cG_0, \Gamma^0_0, \Gamma^0_1\}$.

\begin{proposition}
Let the boundary triplet $\{ \cG_0, \Gamma^0_{0}, \Gamma^0_{1}\}$
for $S_0^*$ be as defined in Proposition~{\rm \ref{BTS0}}.
Then the corresponding $\gamma$-field
and Weyl function are given by
\[
 \gamma_0(\lambda): \cG_0\to \sN_\lambda(S_0^*), \;
 \begin{pmatrix} h_{1}  \\ h_{2} \end{pmatrix} \to
 \begin{pmatrix} h_{1}  \\ h_{2} \end{pmatrix};
 \quad
 M_0(\lambda) =\lambda I_{\cG_0}, \quad \lambda \in \dC\setminus\{0\}.
\]
\end{proposition}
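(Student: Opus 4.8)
The plan is to mimic the computation carried out in the proof of Theorem \ref{WEYL}: realise the $\gamma$-field and the Weyl function as the graphs $\{\{\Gamma^0_0\wh f_\lambda,\,f_\lambda\}\}$ and $\{\{\Gamma^0_0\wh f_\lambda,\,\Gamma^0_1\wh f_\lambda\}\}$, where $\wh f_\lambda=\{f_\lambda,\lambda f_\lambda\}$ ranges over the defect space $\wh\sN_\lambda(S_0^*)$. Since $S_F=\ker\Gamma^0_0$ is the singular selfadjoint relation \eqref{friedfor}, whose operator part is the zero operator, one has $\rho(S_F)=\dC\setminus\{0\}$, so that both objects are well defined for every $\lambda\in\dC\setminus\{0\}$, exactly as in the statement.

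First I would fix $\lambda\neq 0$ and invoke the explicit parametrisation \eqref{hklS0} of the defect space: every $\wh f_\lambda\in\wh\sN_\lambda(S_0^*)$ has the form $\{\begin{pmatrix} h_1\\ h_2\end{pmatrix},\begin{pmatrix}\lambda h_1\\ \lambda h_2\end{pmatrix}\}$ with $h_1\in\mul R^*$ and $h_2\in\ker R^*$, so that its top component $f_\lambda=\begin{pmatrix} h_1\\ h_2\end{pmatrix}$ runs precisely through $\sN_\lambda(S_0^*)=\cG_0$; see \eqref{hk0S0}. The single fact that drives the whole computation is that $Q_0=P_{\mul R^*}\times P_{\ker R^*}$ restricts to the identity on $\cG_0$, whence $Q_0\begin{pmatrix} h_1\\ h_2\end{pmatrix}=\begin{pmatrix} h_1\\ h_2\end{pmatrix}$ for exactly the vectors appearing above.

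Then, using the definitions \eqref{btS0}, I would compute $\Gamma^0_0\wh f_\lambda=Q_0\begin{pmatrix} h_1\\ h_2\end{pmatrix}=\begin{pmatrix} h_1\\ h_2\end{pmatrix}=f_\lambda$, which identifies the graph of $\gamma_0(\lambda)$ with $\{\{\begin{pmatrix} h_1\\ h_2\end{pmatrix},\begin{pmatrix} h_1\\ h_2\end{pmatrix}\}:\ h_1\in\mul R^*,\ h_2\in\ker R^*\}$, that is, the canonical identity map of $\cG_0$ onto $\sN_\lambda(S_0^*)=\cG_0$. Likewise $\Gamma^0_1\wh f_\lambda=Q_0\begin{pmatrix}\lambda h_1\\ \lambda h_2\end{pmatrix}=\lambda\begin{pmatrix} h_1\\ h_2\end{pmatrix}$, so that the graph of $M_0(\lambda)$ equals $\{\{\begin{pmatrix} h_1\\ h_2\end{pmatrix},\lambda\begin{pmatrix} h_1\\ h_2\end{pmatrix}\}\}$, i.e. $M_0(\lambda)=\lambda I_{\cG_0}$.

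There is essentially no serious obstacle: once the defect elements are written out via \eqref{hklS0}, everything reduces to the observation that $Q_0$ fixes $\cG_0$. The only point deserving a line of justification is that the formulas are valid precisely on $\dC\setminus\{0\}$, which follows from $\rho(S_F)=\dC\setminus\{0\}$; this in turn is read off from the block form \eqref{friedfor}, whose operator part is the zero operator on $\cdom R$, so that its spectrum reduces to $\{0\}$ together with the point at infinity contributed by the multivalued part.
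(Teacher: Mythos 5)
Your proposal is correct and follows essentially the same route as the paper: parametrize $\wh\sN_\lambda(S_0^*)$ via \eqref{hklS0}, apply the boundary mappings \eqref{btS0}, and use that $Q_0$ acts as the identity on $\cG_0=\sN_\lambda(S_0^*)$ to read off $\gamma_0(\lambda)$ as the inclusion map and $M_0(\lambda)=\lambda I_{\cG_0}$. Your extra remark that $\dC\setminus\{0\}\subset\rho(S_F)$, justified by the operator part of \eqref{friedfor} being the zero operator, is a sound (and implicit in the paper) justification of the domain of validity.
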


\begin{proof}
Recall from \eqref{hklS0} that for any $\lambda \neq 0$ one has that
\[
 \wh \sN_{\lambda}(S_0^*)
 = \left\{ \,\left\{\begin{pmatrix} h_{1}  \\ h_{2} \end{pmatrix},
 \lambda \begin{pmatrix} h_{1}  \\ h_{2} \end{pmatrix} \right\}:\,
 h_{1}\in \mul R^*, \,  h_{2} \in \ker R^*  \,\right\}.
\]
Thus, for the elements in $\wh \sN_{\lambda}(S_0^*)$
it follows from \eqref{btS0} and the equality
$\sN_{\lambda}(S_0^*)=\cG_0$, $\lambda\neq 0$, in \eqref{hk0S0} that
\[
 \Gamma_{0} \left\{\begin{pmatrix} h_{1}  \\ h_{2} \end{pmatrix},
  \lambda \begin{pmatrix} h_{1}  \\ h_{2} \end{pmatrix} \right\}
 =\begin{pmatrix} h_{1}  \\ h_{2} \end{pmatrix},
  \quad
  \Gamma_{1} \left\{\begin{pmatrix} h_{1}  \\ h_{2} \end{pmatrix},
 \lambda \begin{pmatrix} h_{1}  \\ h_{2} \end{pmatrix} \right\}
 =\lambda \begin{pmatrix} h_{1} \\ h_{2} \end{pmatrix}.
\]
Therefore, by definition, the graph of the Weyl function $M_0$ is given by
\[
 M_0(\lambda)=\left\{ \left\{ \begin{pmatrix} h_{1} \\ h_{2} \end{pmatrix},
 \lambda \begin{pmatrix} h_{1} \\ \lambda h_{2} \end{pmatrix} \right\} :\,
 h_{1}\in\mul R^*, h_{2}\in\ker R^{*}\,\right\},
\]
i.e., $M_0(\lambda)=\lambda I_{\cG_0}$.

Likewise, by definition, the graph of the $\gamma$-field is given by
\[
 \gamma_0(\lambda)
 =\left\{ \left\{ \begin{pmatrix} h_{1} \\ h_{2} \end{pmatrix},
 \begin{pmatrix} h_{1} \\ h_{2} \end{pmatrix} \right\} :\,
 h_{1}\in\mul R^*, h_{2}\in\ker R^{*} \,\right\},
\]
so that $\gamma_0(\lambda)$ is a constant (inclusion) mapping
from $\cG_0$ onto $\sN_{\lambda}(S_0^*)$, $\lambda\neq 0$.
\end{proof}

It is possible to describe all nonnegative selfadjoint extensions of $S$
in an explicit form by means
of suitable block relation formulas.
For this purpose, first notice that
\[
 \mul S_0=(\{0\} \oplus \cran R)^\top.
\]
Hence $S_0$ can be decomposed via its operator part $(S_0)_{\rm op}$ as follows
 \begin{equation}\label{S0decom}
 S_0= (S_0)_{\rm op} \hoplus (S_0)_\mul
 = (S_0)_{\rm op} \hoplus \left( \{0\}\times (\{0\} \oplus \cran R)^\top \right),
\end{equation}
where $(S_0)_\mul=\{0\}\times \mul S_0$ is a selfadjoint relation in $\cran R$
which appears as an orthogonal selfadjoint part in the adjoint of $S_0$
as well as in every selfadjoint extension of $S_0$ in $\sH_1\oplus\sH_2$.
Therefore, it suffices to consider the selfadjoint extensions of
the operator part $(S_0)_{\rm op}$ in the closed subspace
\[
 \sH_0:=\sH_1\oplus \ker R^*.
\]
Observe that
\[
(S_0)_{\rm op}= 0\uphar \dom S_0= 0\uphar \cdom R=\cdom R \times \{0\}.
\]
The adjoint of $(S_0)_{\rm op}$ in $\sH_0$ is given by
\begin{equation}\label{S0op*}
 ((S_0)_{\rm op})^* = (\sH_1\oplus \ker R^*)^\top\times ( \mul R^{*} \oplus \ker R^*)^\top= \sH_0 \times \cG_0,
\end{equation}
see \eqref{G0}. It is natural to decompose $\sH_0$ as follows
\[
 \sH_0
 =\cdom R \oplus (\mul R^* \oplus \ker R^*)= \cdom R \oplus \cG_0.
\]
Now the following result is obtained from Proposition \ref{BTS0}
after restricting the mappings $\Gamma^0_0$ and $\Gamma^0_1$
therein to $((S_0)_{\rm op})^*$; for simplicity the same notation
is kept here for these two restrictions; see \cite[Remark 2.3.10]{BHS}.

\begin{theorem}\label{BTS0op}
Let the symmetric relation $(S_0)_{\rm op}$ be the operator part of $S_0$ in the subspace $\sH_0=\sH_1\oplus \ker R^*$
with the adjoint \eqref{S0op*}.
Let $Q^0_0$ be the orthogonal projection from $\sH_0$ onto $\cG_0$. Then for an element
\[
 \left\{\begin{pmatrix} h_{1}  \\ h_{2} \end{pmatrix},
 \begin{pmatrix} k_{1}  \\ k_{2} \end{pmatrix} \right\} \in ((S_0)_{\rm op})^{*},
\]
define
\begin{equation}\label{btS0op}
 \Gamma^0_0  \left\{\begin{pmatrix} h_{1}  \\ h_{2} \end{pmatrix},
 \begin{pmatrix} k_{1}  \\ k_{2} \end{pmatrix} \right\}
 =Q^0_0\begin{pmatrix} h_{1}  \\ h_{2} \end{pmatrix}
\quad \mbox{and} \quad
\Gamma^0_1  \left\{\begin{pmatrix} h_{1}  \\ h_{2} \end{pmatrix},
\begin{pmatrix} k_{1}  \\ k_{2} \end{pmatrix} \right\}
=Q^0_0  \begin{pmatrix} k_{1} \\ k_{2} \end{pmatrix}.
\end{equation}
Then $\{\cG_0, \Gamma^0_0, \Gamma^0_1\}$
is a boundary triplet for the adjoint $((S_0)_{\rm op})^{*}$.
Furthermore, the (nonnegative) selfadjoint extensions $S_\Theta$ of $(S_0)_{\rm op}$ in $\sH_0$ are
in one-to-one correspondence with the (nonnegative) selfadjoint relations $\Theta$ in $\cG_0$ via
\begin{equation}\label{S0Block}
 S_\Theta = O_{\cdom R} \hoplus \Theta,
 \end{equation}
where the decomposition is according to $\sH_0=\cdom R \oplus \cG_0$.

In particular, the extremal extensions $S_\Theta$ of $(S_0)_{\rm op}$ are in one-to-one correspondence
with the closed subspaces $\sL\subset \cG_0$ via $\Theta=\sL\times (\cG_0\ominus\sL)$.
\end{theorem}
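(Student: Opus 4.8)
The plan is to obtain Theorem~\ref{BTS0op} from the basic boundary triplet for $S_0^*$ already constructed in Proposition~\ref{BTS0} by passing to the operator part. The point of departure is the orthogonal decomposition \eqref{S0decom}, $S_0=(S_0)_{\rm op}\hoplus(S_0)_\mul$, in which the purely multivalued summand $(S_0)_\mul$ is carried by $\cran R$ and reappears, unchanged, as an orthogonal selfadjoint part of $S_0^*$ and of every selfadjoint extension of $S_0$ in $\sH_1\oplus\sH_2$. Hence the selfadjoint extensions of $S_0$ are in bijection with those of $(S_0)_{\rm op}$ in $\sH_0=\sH_1\oplus\ker R^*$, and the restriction of $\{\cG_0,\Gamma^0_0,\Gamma^0_1\}$ to $((S_0)_{\rm op})^*$ is again a boundary triplet with the same boundary space $\cG_0$; this is the standard reduction recorded in \cite[Remark~2.3.10]{BHS}. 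The formulas \eqref{btS0op} are literally the restrictions of \eqref{btS0}, and the identities $\ker\Gamma^0_0=(S_F)_{\rm op}$ and $\ker\Gamma^0_1=(S_K)_{\rm op}$ persist from Proposition~\ref{BTS0}. Because the triplet is basic (positive), the standard boundary-triplet correspondence $S_\Theta=\ker(\Gamma^0_1-\Theta\Gamma^0_0)$ produces a one-to-one correspondence between selfadjoint relations $\Theta$ in $\cG_0$ and selfadjoint extensions of $(S_0)_{\rm op}$, under which nonnegative $\Theta$ correspond to nonnegative $S_\Theta$.

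Next I would make the block formula \eqref{S0Block} explicit by computing $\ker(\Gamma^0_1-\Theta\Gamma^0_0)$ directly. A generic element of $((S_0)_{\rm op})^*$ has the form $\{(h_1,h_2)^\top,(k_1,k_2)^\top\}$ with $h_1\in\sH_1$, $h_2\in\ker R^*$ and $(k_1,k_2)^\top\in\cG_0$, whence $\Gamma^0_0=(P_{\mul R^*}h_1,h_2)^\top$ and $\Gamma^0_1=(k_1,k_2)^\top$. Splitting $h_1=h_1'+h_1''$ along $\sH_1=\cdom R\oplus\mul R^*$ and reading the element in the decomposition $\sH_0=\cdom R\oplus\cG_0$, its $\cdom R$-component is the pair $\{h_1',0\}$, i.e.\ the zero operator $O_{\cdom R}$, while its $\cG_0$-component is $\{(h_1'',h_2)^\top,(k_1,k_2)^\top\}$; the membership condition $\{\Gamma^0_0,\Gamma^0_1\}\in\Theta$ reads exactly $\{(h_1'',h_2)^\top,(k_1,k_2)^\top\}\in\Theta$. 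This gives $S_\Theta=O_{\cdom R}\hoplus\Theta$. The same splitting yields $((k_1,k_2)^\top,(h_1,h_2)^\top)=(k_1,h_1'')+(k_2,h_2)$, which is precisely the quadratic form of $\Theta$, re-confirming $S_\Theta\geq 0\iff\Theta\geq 0$ independently of the general theory.

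For the extremal extensions I would invoke Lemma~\ref{extremal0} for the nonnegative relation $(S_0)_{\rm op}$ in $\sH_0$. Its Kre\u{\i}n--von Neumann extension is the zero operator $O_{\sH_0}$, so $\cW(((S_0)_{\rm op})_K)=\{0\}$ and the lemma gives: a nonnegative selfadjoint extension is extremal if and only if its numerical range is $\{0\}$, i.e.\ $\dom S_\Theta\perp\ran S_\Theta$. Since the $\cdom R$-component of $\ran S_\Theta$ is trivial, this orthogonality is equivalent to $\dom\Theta\perp\ran\Theta$, that is $\cW(\Theta)=\{0\}$ by Lemma~\ref{numranzero}. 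Finally I would identify the selfadjoint relations $\Theta$ in $\cG_0$ with $\cW(\Theta)=\{0\}$: writing $\Theta=\Theta_{\rm op}\hoplus(\{0\}\times\mul\Theta)$ with $\mul\Theta=\cG_0\ominus\cdom\Theta$, the operator part $\Theta_{\rm op}$ is selfadjoint with numerical range $\{0\}$, hence $\Theta_{\rm op}=0$ on the closed subspace $\sL:=\cdom\Theta$, so that $\Theta=\sL\times(\cG_0\ominus\sL)$; conversely every such $\Theta$ is selfadjoint with trivial numerical range (cf.\ the remark following Corollary~\ref{write}). This is the asserted bijection with closed subspaces $\sL\subset\cG_0$, with $\sL=\{0\}$ and $\sL=\cG_0$ recovering $(S_F)_{\rm op}$ and $(S_K)_{\rm op}$.

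The routine parts are short direct computations; I expect the only point needing care to be the reduction to the operator part, namely checking that \eqref{btS0op} are genuinely the restrictions of the maps in \eqref{btS0} and that their kernels are exactly the operator parts of the Friedrichs and Kre\u{\i}n--von Neumann extensions, so that the basic-triplet structure — and with it the nonnegativity correspondence — is inherited by $((S_0)_{\rm op})^*$.
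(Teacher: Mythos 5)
Your proposal is correct and follows essentially the same route as the paper's proof: the boundary triplet for $((S_0)_{\rm op})^{*}$ is obtained by restricting the maps of Proposition \ref{BTS0} (with the same appeal to \cite[Remark 2.3.10]{BHS}), and the block formula \eqref{S0Block} comes from the same computation — your splitting $h_1=h_1'+h_1''$ along $\sH_1=\cdom R\oplus\mul R^{*}$ is just an explicit version of the paper's componentwise decomposition $((S_0)_{\rm op})^{*}=(S_0)_{\rm op}\hoplus(\cG_0\times\cG_0)$, on which the boundary maps vanish and act as the identity, respectively. For the extremal extensions you route through Lemma \ref{extremal0} and Lemma \ref{numranzero} (trivial numerical range), whereas the paper argues directly with the form inclusion \eqref{extr}: since the Kre\u{\i}n--von Neumann extension of $(S_0)_{\rm op}$ is the zero operator on $\sH_0$, its form is the zero form on $\sH_0$, and a closed nonnegative form is a restriction of it exactly when it is itself a zero form, i.e.\ exactly when the numerical range is trivial — so the two arguments are equivalent in substance and both end with the classification $\Theta=\sL\times(\cG_0\ominus\sL)$. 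One small inaccuracy, harmless to your argument: the kernels of the restricted maps are the Friedrichs and Kre\u{\i}n--von Neumann extensions of $(S_0)_{\rm op}$ in $\sH_0$ (equivalently, $S_F$ and $S_K$ with the common multivalued part $(S_0)_{\mul}$ split off), not the operator parts $(S_F)_{\rm op}$ and $(S_K)_{\rm op}$; indeed the restricted $\ker\Gamma^0_0$ equals $\cdom R\times\cG_0$, which has nontrivial multivalued part, while $(S_F)_{\rm op}$ is the zero operator on $\cdom R$, i.e.\ $(S_0)_{\rm op}$ itself (for $S_K$ the two notions happen to coincide). Since you re-derive both the selfadjointness and the nonnegativity correspondence directly from \eqref{S0Block}, nothing in your proof rests on that mislabeling.
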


\begin{proof}
First notice that the component $(S_0)_\mul$ of $S_0$ in \eqref{S0decom} belongs to the intersection
$\ker\Gamma^0_0\cap \ker \Gamma^0_1$. Moreover, since $S_0^*=((S_0)_{\rm op})^* \hoplus (S_0)_\mul$, it is clear that
by restricting the mappings $\Gamma^0_0$ and $\Gamma^0_1$ to $((S_0)_{\rm op})^*$, one obtains from the boundary triplet
for $S_0^*$ a boundary triplet for $((S_0)_{\rm op})^*$ as defined in \eqref{btS0op}.

Next observe that since $\sH_0=\dom (S_0)_{\rm op}\oplus \cG_0$ and
$(S_0)_{\rm op}=\cdom R \times \{0\}$ while $((S_0)_{\rm op})^*=\sH_0 \times \cG_0$, see \eqref{S0op*},
one has the following orthogonal componentwise decomposition:
\[
 ((S_0)_{\rm op})^*= (S_0)_{\rm op} \hoplus (\cG_0\times \cG_0).
\]
Therefore, by decomposing $\wh f\in ((S_0)_{\rm op})^*$
according to this decomposition in the form
$\wh f=\wh f_0 \hoplus \wh f_{\cG_0}$
with $\wh f_0\in (S_0)_{\rm op}$ and
$\wh f_{\cG_0}=\{f_{\cG_0},f_{\cG_0}^\prime\}\in \cG_0\times \cG_0$,
it follows that
\[
 \Gamma^0_0\wh f=\Gamma^0_0\wh f_{\cG_0}
 =Q^0_0 f_{\cG_0}= f_{\cG_0}, \quad
 \Gamma^0_1\wh f=\Gamma^0_1\wh f_{\cG_0}
 =Q^0_0 f_{\cG_0}^\prime= f_{\cG_0}^\prime.
\]
Hence, the pair of mappings $(\Gamma^0_0,\Gamma^0_1)$
act as the identity mapping on the component $\cG_0\times\cG_0$
and vanishes on the other component $(S_0)_{\rm op}$ of $((S_0)_{\rm op})^*$.
This proves the explicit block formula \eqref{S0Block} for
the selfadjoint extensions of $(S_0)_{\rm op}$.

Due to \eqref{S0Block},  the Kre\u{\i}n extension of $(S_0)_{\rm op}$
corresponds to $\Theta=\sH_0\times \{0\}$.
The corresponding form $\st_K$ is just the
zero form on the domain $\dom \st_K=\sH_0$.
Since extremal extensions are the
nonnegative selfadjoint extensions whose associated closed forms
are restrictions of the form
$\st_K$, they are zero forms on the closed subspaces
$\dom S_0\oplus\sL$, where $\sL\subset\cG_0$.
This clearly implies the formula for the selfadjoint relations
associated to such closed forms
and completes the proof.
\end{proof}

Note that the (nonnegative) selfadjoint extension $S_\Theta$
of $(S_0)_{\rm op}$ in $\sH_0$ can be written as a block relation
\[
 S_\Theta = \begin{pmatrix}
                  0 & 0 \\
                  0 & \Theta
                \end{pmatrix},
\]
involving the relation $\Theta$.
Such block representations for selfadjoint extensions of a bounded operator can be found in \cite[Proposition~5.1]{HMS2004},
where a different boundary triplet was used; see also \cite[Remark 2.4.4]{BHS}.
It is possible to obtain a connection to the boundary triplet in \cite{HMS2004} by using
the following expression for the adjoint of $(S_0)_{\rm op}$:
\[
  ((S_0)_{\rm op})^* =S_K \hoplus \left( \{0\}\times \cG_0 \right).
\]

Notice that the extremal extensions described in Theorem \ref{BTS0op}
correspond to the boundary conditions
in $\cG_0$ that are determined by the orthogonal projections
$P_\sL$ from $\cG_0$ onto $\sL$; cf. \cite[Proposition 7.1]{AHSS}.
Recall that orthogonal projections $P_\sL$ are extreme points
of the operator interval $[0,I_{\cG_0}]$,
which also motivates the term ``extremal extension'' in this situation.
There are further descriptions of extremal extensions.
In particular, \cite[Theorem 8.3]{AHSS} contains a purely analytic description
of extremal extensions by means of associated Weyl functions.
In the present situation this would
lead to the following analytic description: the Weyl functions
(of appropriately transformed boundary triplets)
of all extremal extensions are of the form:
\[
 M_\Theta(\lambda)= -1/\lambda I_\sL \oplus \lambda I_{\cG_0\ominus\sL}.
\]

\section{Semibounded extensions and associated semibounded parameters}\label{special}

In this section semibounded selfadjoint extensions of $S$
are investigated.
For this purpose it is convenient to introduce
a symmetric extension $\wt S$ of $S$
by reducing the parameter space $\cG$ slightly,
in case the original relation $R$ is not densely defined.
The corresponding boundary triplet
has a parameter space $\wt \cG \subset \cG$
and due this restriction the corresponding Weyl function
has a specific asymptotic behavior. \\

Assume that the linear relation $R$ from $\sH_1$ to $\sH_2$ is closed
and let the symmetric relation $S$ in $\sH=\sH_1 \oplus \sH_2$ be as in
\eqref{tstar0}.  Define the linear relation $\wt S$ by
\begin{equation}\label{tstar0w}
\wt S
 =\left\{\, \left\{\begin{pmatrix} f_{1}  \\ 0 \end{pmatrix},
 \begin{pmatrix} g_{1}  \\ g_{2} \end{pmatrix} \right\} :
 \, \{f_{1}, g_2\} \in R, \, g_{1} \in \mul R^* \,  \right\},
\end{equation}
so that
\begin{equation}\label{tstar0w1}
\wt S=S \hplus (\{0\} \oplus \{0\})^\top \times  (\mul R^* \oplus \{0\})^\top. 
\end{equation}
Note that $\dom \wt S \perp \ran \wt S$ and that
$\wt S$ is a closed symmetric extension of $S$.
 It follows from \eqref{tstar0w1}, together with \eqref{tstar},   that
\begin{equation}\label{tstarw}
(\wt S)^*
 = \left\{\, \left\{\begin{pmatrix} h_{1}  \\ h_2 \end{pmatrix},
 \begin{pmatrix} k_{1}  \\ k_{2} \end{pmatrix} \right\} :\,
 h_1 \in \cdom R, \, \{h_{2}, k_1\} \in R^*, \, k_{2} \in \sH_2 \,  \right\}.
\end{equation}
Observe that matrix representations for $\wt S$ and $(\wt S)^*$ are given by
\begin{equation}\label{symm0*m}
\wt S=\begin{pmatrix} \sH_1  \times \{0\} &\{0\} \times  \mul R^{*} \\
R  & \{0\} \times \{0\} \end{pmatrix}, \quad
 (\wt S)^{*}=
\begin{pmatrix} \sH_1 \times \{0\} & R^{*} \\
\cdom R \times \sH_2  & \sH_2 \times \sH_2 \end{pmatrix}.
\end{equation}
For $\lambda \in \dC$ the eigenspace associated with \eqref{tstarw} is given by
\begin{equation}\label{hklllw}
 \wh \sN_{\lambda}((\wt S)^*)
 = \left\{ \,\left\{\begin{pmatrix} h_{1}  \\ h_{2} \end{pmatrix},
 \begin{pmatrix} k_{1}  \\ k_{2} \end{pmatrix} \right\}:\,
 \begin{matrix} h_1 \in \cdom R, & k_{1}=\lambda h_{1} \\
  k_{2}=\lambda h_{2}, & \{h_{2}, k_{1}\} \in R^* \end{matrix} \,\right\},
\end{equation}
and,  hence, with $\sN_\lambda((\wt S)^*)=\ker ((\wt S)^*-\lambda)$, one has
\[
 \sN_{\lambda}((\wt S)^*)= \left\{ \begin{pmatrix} h_{1}  \\ h_{2} \end{pmatrix} :
 \,\{h_{2},\lambda h_{1}\} \in (R^*)_{\rm s} \,\right\}.
\]
Since $\dom \wt S=\dom S$, one sees that
 \[
 \mul (\wt S)^*= \mul S^*=(\mul R^* \oplus \sH_2)^\top.
 \]
Similar to the  situation in Section \ref{BT5}, an eigenspace of
$(\wt S)^*$
will play a special role:
\begin{equation}\label{gw}
 \wt \cG=\sN_{-1}((\wt S)^*)
 =\left\{ \begin{pmatrix} h_{1}  \\ h_{2} \end{pmatrix} :
 \,\{h_{2},- h_{1}\} \in (R^*)_{\rm s} \,\right\}= J(R^*)_{\rm s}.
\end{equation}
It is straightforward to see that (cf. \eqref{decomp}, \eqref{decompl})
 \begin{equation}\label{orthw}
 \sH_1 \oplus \sH_2
 =R \hoplus (\mul R^* \oplus \{0\}) \hoplus J\,(R^*)_{\rm s}
 =  R \hoplus (\mul R^* \oplus \{0\}) \hoplus \wt \cG.
\end{equation}

\begin{proposition}\label{PropFrasA0+}
Let $R$ be a closed linear relation from $\sH_1$ to
$\sH_2$, let $\wt S$ be defined by \eqref{tstar0w}
with adjoint \eqref{tstarw}, and let $\wt Q$ be the orthogonal projection from
$\sH_1 \oplus \sH_2$ onto $\wt\cG$ in \eqref{gw}. With an element
\begin{equation}\label{hkw}
 \left\{\begin{pmatrix} h_{1}  \\ h_{2} \end{pmatrix},
 \begin{pmatrix} k_{1}  \\ k_{2} \end{pmatrix} \right\},
 \quad h_1 \in \cdom R, \,\{h_{2}, k_{1}\} \in R^{*},  \, k_2 \in \sH_2,
\end{equation}
in $(\wt S)^{*}$ define
\begin{equation}\label{btw}
\wt  \Gamma_0  \left\{\begin{pmatrix} h_{1}  \\ h_{2} \end{pmatrix},
 \begin{pmatrix} k_{1}  \\ k_{2} \end{pmatrix} \right\}
 =\wt Q \begin{pmatrix} -k_{1}  \\ h_{2} \end{pmatrix}
\quad \mbox{and} \quad
\wt \Gamma_1  \left\{\begin{pmatrix} h_{1}  \\ h_{2} \end{pmatrix},
\begin{pmatrix} k_{1}  \\ k_{2} \end{pmatrix} \right\}
=\wt Q  \begin{pmatrix} h_{1} \\ k_{2} \end{pmatrix}.
\end{equation}
Then $\{ \wt \cG, \wt \Gamma_0, \wt \Gamma_1\}$
is a boundary triplet for $(\wt S)^{*}$  such that
\begin{equation}\label{hhw}
 \ker \wt \Gamma_0= S_F,
\end{equation}
where $S_F$ is given by \eqref{friedfor}, and
\begin{equation}\label{kkw}
\ker \wt \Gamma_1=K,
\end{equation}
where $K$ is given by \eqref{kk}.
Moreover, the corresponding Weyl function $\wt M(\lambda)\in \bB(\wt \cG)$  is given by
 \begin{equation}\label{Weyltilde}
\wt M(\lambda)
 =\wt Q \begin{pmatrix} -\frac{1}{\lambda} & 0\\
 0 & \lambda \end{pmatrix}
 \uphar {\wt \cG},
 \quad \lambda \in \dC \setminus \{0\}.
 \end{equation}
\end{proposition}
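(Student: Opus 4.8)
The plan is to derive this boundary triplet from the one already constructed in Theorem~\ref{BT}, rather than verifying the axioms from scratch, exploiting the inclusions $(\wt S)^*\subset S^*$ (since $S\subset\wt S$) and $\wt\cG\subset\cG$. The organizing fact is the orthogonal decomposition $\cG=\wt\cG\oplus(\mul R^*\oplus\{0\})$, read off from \eqref{orthw} together with $\cG=R^\perp$ in \eqref{hk00}; writing $P_0$ for the orthogonal projection onto $\mul R^*\oplus\{0\}$ this gives $\wt Q=Q-P_0$. The single observation that drives everything is that on $(\wt S)^*$ one has $h_1\in\cdom R=(\mul R^*)^\perp$, so that $P_0\begin{pmatrix} h_1\\ k_2\end{pmatrix}=0$ and hence $\wt\Gamma_1$ agrees with $\Gamma_1$ on $(\wt S)^*$, whereas $\wt\Gamma_0$ differs from $\Gamma_0$ only by the vector $P_0\begin{pmatrix} -k_1\\ h_2\end{pmatrix}\in\mul R^*\oplus\{0\}$.

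With this in hand the abstract Green identity transfers for free. For $\wh f,\wh g\in(\wt S)^*$ the identity established for $\{\cG,\Gamma_0,\Gamma_1\}$ in the proof of Theorem~\ref{BT} already applies. Since $\wt\Gamma_1\wh f=\Gamma_1\wh f\in\wt\cG$ and $\wt\Gamma_1\wh g=\Gamma_1\wh g\in\wt\cG$, while $\wt\Gamma_0$ changes $\Gamma_0$ only by a vector of $\mul R^*\oplus\{0\}=\cG\ominus\wt\cG$, every correction term is an inner product of a vector in $\wt\cG$ with a vector in $\mul R^*\oplus\{0\}$ and therefore vanishes. Thus $(\wt\Gamma_1\wh f,\wt\Gamma_0\wh g)-(\wt\Gamma_0\wh f,\wt\Gamma_1\wh g)$ equals the corresponding expression with $\Gamma_0,\Gamma_1$, which is the left-hand side of Green's identity.

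Next I would check surjectivity of $(\wt\Gamma_0,\wt\Gamma_1)$ onto $\wt\cG\times\wt\cG$ and identify the two kernels. Because $\wt\Gamma_0\wh f$ depends only on $\{h_2,k_1\}\in R^*$ and $\wt\Gamma_1\wh f$ only on the independent pair $h_1\in\cdom R$, $k_2\in\sH_2$, it suffices that each is separately onto. For $\wt\Gamma_0$ this is $\wt Q(JR^*)=\wt Q\cG=\wt\cG$; for $\wt\Gamma_1$ the relevant (and $\wt S$-specific) point is that $\wt\cG=J(R^*)_{\rm s}\subset\cdom R\oplus\sH_2$, because $\ran(R^*)_{\rm s}\subset(\mul R^*)^\perp=\cdom R$, so that $\wt Q(\cdom R\oplus\sH_2)=\wt\cG$ even though $h_1$ no longer ranges over all of $\sH_1$. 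The kernels then drop out: $\wt\Gamma_0\wh f=0$ forces $\begin{pmatrix} -k_1\\ h_2\end{pmatrix}\in\cG\ominus\wt\cG=\mul R^*\oplus\{0\}$, i.e.\ $h_2=0$ and $k_1\in\mul R^*$, which is precisely $S_F$ in \eqref{friedfor}; and $\wt\Gamma_1\wh f=Q\begin{pmatrix} h_1\\ k_2\end{pmatrix}=0$ forces $\begin{pmatrix} h_1\\ k_2\end{pmatrix}\in\cG^\perp=R$, i.e.\ $\{h_1,k_2\}\in R$, which combined with $\{h_2,k_1\}\in R^*$ is exactly $K$ in \eqref{kk}.

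Finally, for the Weyl function I would restrict to $\wh\sN_\lambda((\wt S)^*)$ in \eqref{hklllw}, where $k_1=\lambda h_1$, $k_2=\lambda h_2$ and $\{h_2,\lambda h_1\}\in(R^*)_{\rm s}$. Then $\begin{pmatrix} -\lambda h_1\\ h_2\end{pmatrix}$ already lies in $\wt\cG$ by the description \eqref{gw}, so the projection acts trivially, $\wt\Gamma_0\wh f=\begin{pmatrix} -\lambda h_1\\ h_2\end{pmatrix}=:x$, and $x$ runs through all of $\wt\cG$ as $\wh f$ varies. Applying $\begin{pmatrix} -\frac{1}{\lambda} & 0\\ 0 & \lambda\end{pmatrix}$ to $x$ returns $\begin{pmatrix} h_1\\ \lambda h_2\end{pmatrix}$, whence $\wt Q\begin{pmatrix} -\frac{1}{\lambda} & 0\\ 0 & \lambda\end{pmatrix}x=\wt Q\begin{pmatrix} h_1\\ \lambda h_2\end{pmatrix}=\wt\Gamma_1\wh f$, giving \eqref{Weyltilde}; boundedness and $\dom\wt M(\lambda)=\wt\cG$ are then immediate. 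I expect the main obstacle to be precisely the bookkeeping of the two projections $Q$ and $\wt Q$: the whole argument hinges on the asymmetry that $\wt Q$ acts as $Q$ on the $\Gamma_1$-vectors (through $h_1\perp\mul R^*$) but genuinely truncates the $\Gamma_0$-vectors, and keeping this straight is what lets the Green identity, the two kernels, and the Weyl formula all fall out uniformly.
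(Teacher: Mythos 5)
Your proposal is correct, and it reaches the conclusion by a genuinely different organization than the paper. The paper does not transfer anything from Theorem~\ref{BT}: it re-runs the direct verification (Green identity plus surjectivity, ``as in Theorem~\ref{BT}''), computes the Weyl function by applying \eqref{btw} to \eqref{hklllw}, and identifies the kernels by decomposing along \eqref{orthw}; in particular, for \eqref{kkw} it writes the condition $\wt Q\,(h_1,k_2)^\top=0$ as $(h_1,k_2)^\top\in R \hoplus (\mul R^*\oplus\{0\})$, extracts $\varphi\in\mul R^*$ with $\{h_1+\varphi,k_2\}\in R$, and kills $\varphi$ via $\varphi\in\cdom R\cap\mul R^*=\{0\}$. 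You instead derive the new triplet from the old one by restriction and compression: the orthogonal splitting $\cG=\wt\cG\oplus(\mul R^*\oplus\{0\})^\top$ gives $Q=\wt Q+P_0$, and the single fact $h_1\in\cdom R=(\mul R^*)^\perp$ forces $\wt\Gamma_1=\Gamma_1\uphar(\wt S)^*$ with values already in $\wt\cG$, after which Green's identity transfers (every correction term pairs $\wt\cG$ against $\mul R^*\oplus\{0\}$), surjectivity splits into the two independent parameters, and $\ker\wt\Gamma_1=K$ falls out of $\ker Q=\cG^\perp=R$. Note that this is the same underlying orthogonality that powers the paper's $\varphi$-argument, just packaged structurally rather than pointwise. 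What the paper's route buys is self-containedness: the proposition stands on its own without invoking the earlier triplet beyond analogy. What your route buys is that it makes the relation between the two triplets explicit and, in effect, proves (rather than merely observes, as the paper does in the paragraph following the proposition) that $\wt M(\lambda)$ is the compression of $M(\lambda)$ to $\wt\cG$; it also replaces three separate computations (Green identity, $\ker\wt\Gamma_0$, $\ker\wt\Gamma_1$) by one bookkeeping principle. The only points you assert without detail --- that $x=(-\lambda h_1,h_2)^\top$ exhausts $\wt\cG$ as $\wh f$ ranges over $\wh\sN_\lambda((\wt S)^*)$, and that joint surjectivity follows from separate surjectivity --- are both immediate from \eqref{gw}, \eqref{hklllw} and the independence of $\{h_2,k_1\}\in R^*$ from $(h_1,k_2)$, so there is no gap.
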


\begin{proof}
The fact that $\{ \wt \cG, \wt \Gamma_0, \wt \Gamma_1\}$
is a boundary triplet for $(\wt S)^{*}$ can be proved as in Theorem \ref{BT}.
To get the formula for the Weyl function $\wt M(\lambda)$
apply \eqref{btw} to the elements in \eqref{hklllw} to obtain
\[
 \wt M(\lambda)=\left\{
 \left\{ \wt Q \begin{pmatrix} -\lambda h_{1} \\ h_{2} \end{pmatrix},
 \wt Q \begin{pmatrix} h_{1} \\ \lambda h_{2} \end{pmatrix} \right\} :\,
 \{h_{2}, \lambda h_{1} \} \in (R^{*})_{\rm s}\,\right\}.
\]
Here the first entry belongs to $\wt\cG$ due to
$\{h_{2}, \lambda h_{1} \} \in (R^{*})_{\rm s}$ and
this leads to \eqref{Weyltilde} as in the proof of Theorem \ref{WEYL}.

To see the identity \eqref{hhw}, note that the element in \eqref{hkw} belongs
to $\ker \wt \Gamma_0$ if and only if
\[
 \wt Q \begin{pmatrix} -k_1 \\ h_2 \end{pmatrix}=0.
\]
It follows from \eqref{orthw} that this is the case precisely  if
\[
 h_2=0 \quad \mbox{and} \quad k_1 \in \mul R^*,
\]
and, consequently, one sees from \eqref{tstarw} that
\begin{equation*}
 \ker \wt \Gamma_0
=\left\{\, \left\{  \begin{pmatrix} h_{1} \\ 0\end{pmatrix},
 \begin{pmatrix} k_1  \\ k_{2} \end{pmatrix} \right\} :
 \, h_{1} \in \cdom R, \, k_1 \in \mul R^*, \, k_{2} \in \sH_2\, \right\}.
\end{equation*}
Comparison with Lemma \ref{friedrichs} shows that this extension equals
 the Friedrichs extension $S_F$ of $S$.
Likewise, to see the identity \eqref{kkw},
note that the element in \eqref{hkw} belongs
to $\ker \wt \Gamma_1$ if and only if
 \[
 \wt Q  \begin{pmatrix} h_{1} \\ k_{2} \end{pmatrix}=0.
\]
Thanks to \eqref{orthw}, this is the case precisely if
\begin{equation}\label{eqRR}
 \begin{pmatrix} h_{1} \\ k_{2} \end{pmatrix}
 \in R \oplus (\mul R^* \oplus \{0\})
 \quad \Leftrightarrow \quad
 \{h_1, k_2 \} \in R,
 \end{equation}
and this equivalence confirms \eqref{kkw}.
As to \eqref{eqRR} it suffices to check the implication $(\Rightarrow)$.
By assumption, there exists an element  $\varphi \in \mul R^*$, such that
\[
 \{h_1 +\varphi, k_2\} \in R.
\]
In particular, $h_1 +\varphi \in \dom R$, while by definition $h_1 \in \cdom R$
 (cf. \eqref{hkw}).
Thus $\varphi \in \cdom R$ which, together with $\varphi \in \mul R^*$,
implies that $\varphi=0$.
\end{proof}

Next the Friedrichs and Kre\u{\i}n-von Neumann extensions of $\wt S$
will be determined via \eqref{frform} and \eqref{krform}.

\begin{lemma}\label{friedrichsw}
Let $R$ be a closed linear relation from $\sH_1$ to $\sH_2$ and let $\wt S$
be the  relation defined in  \eqref{tstar0w}.
The Friedrichs extension $S_{F}$ of $S$ is given by
\begin{equation}\label{friedforw}
\wt S_{F} = (\cdom R \oplus \{0\})^\top \times (\mul R^* \oplus \sH_2)^\top
  =S_F.
\end{equation}
\end{lemma}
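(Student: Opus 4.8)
The plan is to mimic the proof of Lemma \ref{friedrichs}, the only new point being that everything must be carried out for $\wt S$ rather than for $S$. The key tool is the closed-form description \eqref{fr} of the Friedrichs extension, which is valid for \emph{any} relation whose numerical range is trivial. So the first step is to record that $\wt S$ belongs to this class: by the remark following \eqref{tstar0w} one has $\dom \wt S \perp \ran \wt S$, hence $\cW(\wt S)=\{0\}$ by Lemma \ref{numranzero}, and $\wt S$ is nonnegative. Consequently \eqref{fr} applies verbatim with $S$ replaced by $\wt S$, giving
\[
 \wt S_F=\{\,\{h,k\}\in (\wt S)^*:\, h \in \cdom \wt S\,\}.
\]

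The second step is to identify the two ingredients on the right. Since the elements of $\wt S$ in \eqref{tstar0w} have first components $(f_1,0)^\top$ with $f_1\in\dom R$, one has $\dom \wt S=\dom S$ and therefore $\cdom \wt S=(\cdom R \oplus \{0\})^\top$. Imposing $h=(h_1,h_2)^\top\in\cdom \wt S$ in the description \eqref{tstarw} of $(\wt S)^*$ then means $h_1\in\cdom R$ and $h_2=0$. The constraint $\{h_2,k_1\}\in R^*$ from \eqref{tstarw} now reads $\{0,k_1\}\in R^*$, i.e.\ $k_1\in\mul R^*$, while $k_2\in\sH_2$ remains free. Reading off the resulting set gives exactly
\[
 \wt S_F=(\cdom R \oplus \{0\})^\top \times (\mul R^* \oplus \sH_2)^\top,
\]
and comparison with \eqref{friedfor} shows that the right-hand side is $S_F$, which is the assertion \eqref{friedforw}.

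There is no real obstacle here: the computation is routine once \eqref{fr} is invoked. The one point that genuinely needs justification, and where a careless argument could go wrong, is the applicability of \eqref{fr} to $\wt S$ — that is, confirming $\cW(\wt S)=\{0\}$ so that $\st_{\wt S}$ is the zero form and $\dom \st_{\wt S_F}=\cdom \wt S$ — together with the simple but essential observation that $\dom \wt S=\dom S$, so that enlarging $S$ to $\wt S$ does not change the domain and hence does not change the Friedrichs extension. This last equality $\wt S_F=S_F$ is the conceptual content of the lemma: the extra multivalued directions $\mul R^*$ added to $S$ in forming $\wt S$ lie in the range, not the domain, so they are invisible to the Friedrichs construction.
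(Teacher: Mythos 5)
Your proof is correct and follows essentially the same route as the paper: verify $\cW(\wt S)=\{0\}$ and $\cdom \wt S=(\cdom R \oplus \{0\})^\top$, apply \eqref{fr} to $\wt S$, and then read off the result from the description \eqref{tstarw} of $(\wt S)^*$. Your extra remarks (that $\{0,k_1\}\in R^*$ forces $k_1\in\mul R^*$, and that $\dom \wt S=\dom S$ explains why $\wt S_F=S_F$) just make explicit what the paper leaves implicit in its final sentence.
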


\begin{proof}
Observe from the definition of $S$ in \eqref{tstar0w}
that  $\cW(\wt S)=\{0\}$ and that
 \[
\cdom \wt S=
 (\cdom R \oplus \{0\})^\top.
\]
Then, thanks to \eqref{fr}, one sees that
\[
\wt S_F= \left\{\, \left\{\begin{pmatrix} h_{1}  \\ h_2 \end{pmatrix},
 \begin{pmatrix} k_{1}  \\ k_{2} \end{pmatrix} \right\} \in \wt S^* :\,
  h_{1}  \in \cdom R, \,  h_2=0
  \,  \right\}.
\]
Hence, it follows from \eqref{tstarw} that \eqref{friedforw} holds.
\end{proof}

\begin{lemma}
Let $R$ be a closed linear relation from $\sH_1$ to $\sH_2$ and let $\wt S$
be the   relation defined in  \eqref{tstar0w}.
The Kre\u{\i}n-von Neumann extension $\wt S_{K}$ of $\wt S$ is given by
\begin{equation}\label{kreinforw}
\wt S_{K} =( \cdom R  \oplus \ker R^* )^\top \times (\mul R^{*} \oplus  \cran R )^\top.
  \end{equation}
 \end{lemma}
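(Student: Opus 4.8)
The plan is to mimic the proof of Lemma \ref{krein}, invoking the characterization \eqref{kr} of the Kre\u{\i}n--von Neumann extension, which is valid for any relation with trivial numerical range. It was already observed after \eqref{tstar0w1} that $\dom \wt S \perp \ran \wt S$, so $\cW(\wt S)=\{0\}$ and hence $\cW((\wt S)^{-1})=\{0\}$ by Lemma \ref{numranzero}. Consequently \eqref{kr}, now applied to $\wt S$, reads
\[
 \wt S_K=\{\, \{h,k\} \in (\wt S)^* :\, k \in \cran \wt S \,\}.
\]
Thus the whole computation reduces to identifying $\cran \wt S$ and then intersecting the corresponding range condition with the description \eqref{tstarw} of $(\wt S)^*$.

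First I would compute the range. From the description \eqref{tstar0w} of $\wt S$, a generic element has the form $\{(f_1,0)^\top,(g_1,g_2)^\top\}$ with $\{f_1,g_2\}\in R$ and $g_1\in\mul R^*$, where $g_1$ and $g_2$ may be chosen independently. Hence $\ran \wt S=(\mul R^*\oplus\ran R)^\top$, and since $\mul R^*=(\cdom R)^\perp$ is already closed, taking closures gives $\cran \wt S=(\mul R^*\oplus\cran R)^\top$.

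Finally I would impose $k\in\cran \wt S$ on a generic element of $(\wt S)^*$ written as in \eqref{hkw}; this forces $k_1\in\mul R^*$ and $k_2\in\cran R$, while $h_1\in\cdom R$ is unchanged and $k_2\in\sH_2$ becomes $k_2\in\cran R$. The one point requiring care---and the main (minor) obstacle---is to translate the surviving constraint $\{h_2,k_1\}\in R^*$ together with $k_1\in\mul R^*$ into a condition on $h_2$: since $k_1\in\mul R^*$ means $\{0,k_1\}\in R^*$, linearity of $R^*$ gives $\{h_2,0\}=\{h_2,k_1\}-\{0,k_1\}\in R^*$, i.e. $h_2\in\ker R^*$, and conversely $h_2\in\ker R^*$ with $k_1\in\mul R^*$ yields $\{h_2,k_1\}\in R^*$. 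The four constraints $h_1\in\cdom R$, $h_2\in\ker R^*$, $k_1\in\mul R^*$, $k_2\in\cran R$ thus decouple completely, and collecting them produces exactly the product form
\[
 \wt S_{K} =( \cdom R  \oplus \ker R^* )^\top \times (\mul R^{*} \oplus  \cran R )^\top,
\]
which is \eqref{kreinforw}.
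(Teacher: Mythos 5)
Your proposal is correct and follows essentially the same route as the paper: verify $\cW((\wt S)^{-1})=\{0\}$, compute $\cran \wt S = (\mul R^* \oplus \cran R)^\top$, apply the characterization \eqref{kr} of the Kre\u{\i}n--von Neumann extension, and intersect with \eqref{tstarw}. You merely make explicit the small decoupling step (that $\{h_2,k_1\}\in R^*$ with $k_1\in\mul R^*$ is equivalent to $h_2\in\ker R^*$ and $k_1\in\mul R^*$) which the paper leaves to the reader, and you correctly use \eqref{kr} where the paper's proof cites \eqref{fr} by an evident typo.
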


\begin{proof}
Observe that $\cW(\wt S^{-1})=\{0\}$ and that
\[
 \cran \wt S = (\mul R^* \oplus \cran R)^\top.
\]
Thanks to \eqref{fr} one sees
 \begin{equation*}
\wt S_K =
\left\{
 \left\{\begin{pmatrix} h_{1}  \\ h_{2} \end{pmatrix},
 \begin{pmatrix} k_{1}  \\ k_{2} \end{pmatrix} \right\} \in (\wt S)^*:\,
   k_1 \in   \mul R^*,\, k_2 \in \cran R
   \,\right\}.
 \end{equation*}
Hence, it follows from \eqref{tstarw}  that \eqref{kreinforw} holds.
\end{proof}

Notice that $\dom \wt S_K \perp \ran \wt S_K$, so that $\cW(\wt S_K)=\{0\}$ and thus $\wt A=\wt A^*\geq 0$
is an extremal extension of $\wt S$ if and only if $\cW(\wt A)=\{0\}$; see Lemma \ref{extremal0}.

Recall that $\ker \Gamma_0$ in Theorem \ref{BT}
is the nonnegative selfadjoint extension $H$ as given in \eqref{hh},
while $\ker \wt\Gamma_0$ in Proposition \ref{PropFrasA0+}
is the Friedrichs extension of $S$ and $\wt S$.
In particular, $H\leq S_F$ and here equality $H=S_F$ holds
if and only if $R$ is densely defined in $\sH_1$ or,
equivalently, $R^*$ is an operator from $\sH_2$ to $\sH_1$.
In this case $\wt S=S$ and the boundary triplet in
Proposition \ref{PropFrasA0+} coincides
with the one in Theorem \ref{BT}. \\

For the block representations of the Friedrichs and Kre\u{\i}n-von Neumann
extensions, note that in terms of block representations one has $\wt S_F=S_F$
as given in \eqref{friedforr}.  It follows from \eqref{kreinforw}
and Corollary \ref{write} that
\[
\begin{split}
 \wt S_{K} & =\begin{pmatrix} \cdom R  \times \mul R^* & \ker R^* \times \mul R^*\\
 \cdom R \times \cran R &  \ker R^{*} \times \cran R \end{pmatrix} \\
 &=\begin{pmatrix} \sH_1 \times \{0\} & \ker R^* \times \mul R^*\\
 \cdom R \times \cran R &  \ker R^{*} \times \cran R \end{pmatrix},
\end{split}
\]
cf. Remark \ref{mulmat} and  \eqref{symm0*m}. \\

Observe that the Weyl function $M(\lambda)\in \bB(\cG)$
in Theorem \ref{WEYL} has the
following limit behavior
\[
 \lim_{x\downarrow -\infty}\left( M(x)  \begin{pmatrix} h_{1} \\ 0 \end{pmatrix},
  \begin{pmatrix} h_{1} \\ 0 \end{pmatrix} \right)
 = 0, \quad   \begin{pmatrix} h_{1} \\  0  \end{pmatrix}  \in  \cG ,
\]
which is possible when $h_1 \in \mul R^*$.
The Weyl function $\wt M(\lambda)\in \bB(\wt \cG)$
in Proposition \ref{PropFrasA0+}
admits the same form as the Weyl function
$M(\lambda)\in \bB(\cG)$ in Theorem \ref{WEYL},
but acts in the smaller space $\wt \cG\subset\cG$;
cf.\eqref{hk00}, \eqref{gw}.
In fact, $\wt M(\lambda)$ is a compression of
$M(\lambda)$ to the subspace $\wt\cG$.
Hence, as in Corollary \ref{cor4.4}, $\wt M(\lambda)$
satisfies the following weak identity
\begin{equation}\label{weylw}
 \left( \wt M(\lambda)  \begin{pmatrix} h_{1} \\ h_{2} \end{pmatrix},
  \begin{pmatrix} h_{1} \\ h_{2} \end{pmatrix} \right)
=  -\frac{1}{\lambda} \,(h_{1}, h_{1})+ \lambda \,(h_{2}, h_{2}),
\quad   \begin{pmatrix} h_{1} \\  h_{2}  \end{pmatrix} \in \wt \cG,
\end{equation}
where $\lambda \in \dC \setminus \{0\}$.
This leads to an interesting limit result.
In fact, it is known that the limit property \eqref{weylw2} of the Weyl function
characterizes $\ker \wt\Gamma_0$ as the Friedrichs extension;
 see e.g. \cite[Corollary 4.1]{DM1995}.

\begin{lemma}
Let $R$ be a closed linear relation from $\sH_1$ to $\sH_2$
 and let $\{ \wt \cG, \wt \Gamma_0, \wt \Gamma_1\}$
be the boundary triplet for $(\wt S)^*$ with the Weyl function
$\wt M(\lambda)$ as in Proposition {\rm \ref{PropFrasA0+}}.
Then
\begin{equation}\label{weylw2}
 \lim_{x\downarrow -\infty}\left( \wt M(x)  \begin{pmatrix} h_{1} \\ h_{2} \end{pmatrix},
  \begin{pmatrix} h_{1} \\ h_{2} \end{pmatrix} \right)
 = -\infty, \quad   \begin{pmatrix} h_{1} \\  h_{2}  \end{pmatrix}  \in \wt \cG \setminus \{0,0\}.
\end{equation}
\end{lemma}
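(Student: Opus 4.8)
The plan is to obtain the limit directly from the weak identity \eqref{weylw} that has already been established for $\wt M$, and then to isolate the one genuinely new point, namely that a nonzero element of $\wt\cG$ cannot have vanishing second component. Fix an element $\begin{pmatrix} h_{1} \\ h_{2} \end{pmatrix} \in \wt\cG$ and insert $\lambda=x$ with $x<0$ into \eqref{weylw}, so that
\[
 \left( \wt M(x) \begin{pmatrix} h_{1} \\ h_{2} \end{pmatrix},
  \begin{pmatrix} h_{1} \\ h_{2} \end{pmatrix} \right)
 = -\frac{1}{x}\,(h_{1},h_{1}) + x\,(h_{2},h_{2}).
\]
As $x\downarrow -\infty$ one has $-1/x\to 0^{+}$, so the first summand tends to $0$, while the second summand tends to $-\infty$ when $h_{2}\neq 0$ and is identically $0$ when $h_{2}=0$. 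Hence the whole assertion \eqref{weylw2} reduces to showing that for every nonzero element of $\wt\cG$ the component $h_{2}$ is nonzero.

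The decisive step is precisely this implication, and it is exactly the feature for which the reduced relation $\wt S$ was introduced. By \eqref{gw} one has $\wt\cG = J(R^{*})_{\rm s}$, so $\begin{pmatrix} h_{1} \\ h_{2} \end{pmatrix}\in\wt\cG$ is equivalent to $\{h_{2},-h_{1}\}\in (R^{*})_{\rm s}$. If $h_{2}=0$, then $\{0,-h_{1}\}\in (R^{*})_{\rm s}$, i.e. $-h_{1}\in\mul (R^{*})_{\rm s}$; but $(R^{*})_{\rm s}$ is the operator part of $R^{*}$ and is therefore single-valued, so $\mul (R^{*})_{\rm s}=\{0\}$ and consequently $h_{1}=0$. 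Thus the only element of $\wt\cG$ with $h_{2}=0$ is the zero element.

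Combining the two observations, any $\begin{pmatrix} h_{1} \\ h_{2} \end{pmatrix}\in\wt\cG\setminus\{0,0\}$ has $h_{2}\neq 0$, whence the term $x\,(h_{2},h_{2})$ dominates and drives the quadratic form to $-\infty$ as $x\downarrow -\infty$, which is \eqref{weylw2}. I expect no real obstacle in carrying this out; the only subtlety worth recording is that in the larger space $\cG$ of Theorem \ref{BT} the analogous limit is $0$ exactly along the directions $\begin{pmatrix} h_{1} \\ 0 \end{pmatrix}$ with $h_{1}\in\mul R^{*}$ (as noted in the limit computation preceding the lemma), and that these are precisely the directions discarded when passing to $\wt\cG = J(R^{*})_{\rm s}$. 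The substantive content is thus the correct bookkeeping of the multivalued part of $R^{*}$.
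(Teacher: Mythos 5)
Your proposal is correct and follows essentially the same route as the paper's own proof: both apply the weak identity \eqref{weylw} for $x<0$ and reduce the claim to showing that any element of $\wt\cG$ with $h_2=0$ must be zero, which follows from $\{h_2,-h_1\}\in(R^*)_{\rm s}$. Your write-up is in fact slightly more explicit than the paper's, since you spell out that this last step rests on the single-valuedness of the operator part $(R^*)_{\rm s}$, i.e. $\mul (R^*)_{\rm s}=\{0\}$, which the paper leaves implicit.
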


\begin{proof}
Consider the identity \eqref{weylw2} for $\lambda <0$, $ \lambda \to -\infty$,
and recall that
\[
\wt \cG =\left\{ \begin{pmatrix} h_{1}  \\ h_{2} \end{pmatrix} :
 \,\{h_{2},- h_{1}\} \in (R^*)_{\rm s} \,\right\}.
\]
Hence, if $h\in\wt \cG$ satisfies $h_2=0$,
then it follows that  $h_1=0$.
This gives a contradiction, thus $h_2\neq 0$ and,
therefore, \eqref{weylw2} holds.
\end{proof}

First recall the following general equivalence.
Let $S$ be a nonnegative relation and let
 $\{\cG,\Gamma_0,\Gamma_1\}$ be a boundary triplet for $S^*$
with $\ker\Gamma_0=S_F$,
where $S_F$ is the Friedrichs extension of $S$.
Let $A_\Theta$ be a selfadjoint extension of $S$ as in
\eqref{btbtnew}. Then the following implication for $x < 0$
is satisfied:
\begin{equation}\label{semieq}
 x \leq A_\Theta \quad \Leftrightarrow \quad M(x) \leq \Theta,
\end{equation}
 see \cite{DM1991}, \cite[Proposition~5.5.6]{BHS}.
In particular, this implies that if $A_\Theta$ is bounded from below,
then also $\Theta$ is bounded from below,
since $M(x)$ is a bounded operator for each $x<0$.
The converse statement does not hold in general;
see \cite[Theorem 3]{DM1991}, \cite[Proposition 4.4]{DM1995}
for a criterion which uses the uniform convergence
of the associated Weyl function $M(x)$ as $x \to -\infty$,
and \cite[Lemma 5.5.7]{BHS}.  \\

Now return to the symmetric relation $\wt S$ in \eqref{tstar0w}.
It follows from Lemma \ref{friedrichsw} that $\ker \wt\Gamma_0=\wt S_F$ 
and hence \eqref{semieq} can be applied to the boundary triplet
$\{ \wt \cG, \wt \Gamma_0, \wt \Gamma_1\}$
and the Weyl funtion $\wt M(\lambda)$ in Proposition {\rm \ref{PropFrasA0+}}.
In the following the notation
$\wt A_\Theta=\ker (\wt \Gamma_1 -\Theta \wt \Gamma_0)$
with $\Theta$ a linear relation in $\wt \cG$,
 will be used for an extension of $\wt S$.
The preservation of semiboundedness
in this boundary triplet depends essentially on the initial relation $R$.

\begin{theorem}\label{propsemibd}
Let $R$ be a closed linear relation from $\sH_1$ to $\sH_2$
 and let $\{ \wt \cG, \wt \Gamma_0, \wt \Gamma_1\}$
be the boundary triplet for $(\wt S)^*$ with the Weyl funtion
$\wt M(\lambda)$ as in Proposition {\rm \ref{PropFrasA0+}}.
Then the following alternative holds:
\begin{enumerate} \def\labelenumi{\rm(\roman{enumi})}
\item if $(R^*)_{\rm s}$ is a bounded operator,
then the selfadjoint extension $\wt A_\Theta$ of $\wt S$
is semibounded from below
if and only if  $\Theta$ is semibounded from below in $\wt\cG$;

\item if the operator $(R^*)_{\rm s}$ is unbounded,
then there are nonzero bounded operators $\Theta$
in $\wt\cG$ with arbitrary small
operator norm $\|\Theta\|$ such that the extension
$\wt A_\Theta$ is not semibounded from below.
\end{enumerate}
\end{theorem}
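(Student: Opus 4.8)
The plan is to run the whole argument through the general semiboundedness equivalence \eqref{semieq}, applied to the boundary triplet $\{\wt\cG,\wt\Gamma_0,\wt\Gamma_1\}$. This is permitted since $\ker\wt\Gamma_0=S_F=\wt S_F$; see \eqref{hhw} and Lemma \ref{friedrichsw}. Consequently $\wt A_\Theta$ is semibounded from below if and only if $\wt M(x)\le\Theta$ for some $x<0$, and it fails to be semibounded from below if and only if $\wt M(x)\not\le\Theta$ for every $x<0$. Everything then reduces to the weak identity \eqref{weylw}, rewritten via $\wt\cG=J(R^*)_{\rm s}$: for $h=(h_1,h_2)^\top\in\wt\cG$ one has $\{h_2,-h_1\}\in(R^*)_{\rm s}$, so $\|h_1\|=\|(R^*)_{\rm s}h_2\|$, and for $x<0$
\[
 \left(\wt M(x)h,h\right)=\frac{1}{|x|}\,\|(R^*)_{\rm s}h_2\|^2-|x|\,\|h_2\|^2 .
\]
The implication ``$\wt A_\Theta$ semibounded $\Rightarrow$ $\Theta$ semibounded'' is immediate and needs no hypothesis on $R$: if $\wt M(x_0)\le\Theta$ for some $x_0<0$, then $\Theta\ge-\|\wt M(x_0)\|\,I$ because $\wt M(x_0)$ is a bounded operator.

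For the converse in case (i) I would use $\|(R^*)_{\rm s}\|=:C<\infty$. Then $\|h_1\|\le C\|h_2\|$, hence $\|h_2\|^2\ge(1+C^2)^{-1}\|h\|^2$, and the displayed identity gives, for $|x|\ge C$,
\[
 \left(\wt M(x)h,h\right)\le\Big(\frac{C^2}{|x|}-|x|\Big)\|h_2\|^2\le\frac{1}{1+C^2}\Big(\frac{C^2}{|x|}-|x|\Big)\|h\|^2 ,
 \qquad h\in\wt\cG ,
\]
the second inequality using that the coefficient is now $\le 0$. Since it tends to $-\infty$ as $x\to-\infty$, one has $\wt M(x)\le c(x)I$ with $c(x)\to-\infty$; that is, $\wt M(x)$ tends to $-\infty$ \emph{uniformly}. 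Given $\Theta\ge mI$, choosing $x$ so negative that $c(x)\le m$ yields $\wt M(x)\le mI\le\Theta$, whence $x\le\wt A_\Theta$ by \eqref{semieq}. I expect this to be the crux of the theorem: boundedness of $(R^*)_{\rm s}$ is exactly what upgrades the merely strong divergence $\wt M(x)\to-\infty$ recorded in \eqref{weylw2} to the uniform divergence needed to transfer semiboundedness back to $\wt A_\Theta$.

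For case (ii) I would exhibit the parameters directly. Fix $\epsilon>0$ and set $\Theta=-\epsilon I_{\wt\cG}$, a nonzero bounded operator with $\|\Theta\|=\epsilon$. Because $(R^*)_{\rm s}$ is unbounded, for every $N$ there is a unit vector $h_2$ with $\|(R^*)_{\rm s}h_2\|>N$; the corresponding $h=(-(R^*)_{\rm s}h_2,h_2)^\top$ lies in $\wt\cG$. For a given $x<0$ the inequality $(\wt M(x)h,h)>(\Theta h,h)=-\epsilon\|h\|^2$ reduces, by the displayed identity, to
\[
 \Big(\frac{1}{|x|}+\epsilon\Big)\|(R^*)_{\rm s}h_2\|^2>|x|-\epsilon ,
\]
whose left-hand side exceeds $\epsilon N^2$; picking $N>\sqrt{|x|/\epsilon}$ makes it hold. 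Thus $\wt M(x)\not\le\Theta$ for \emph{every} $x<0$, so $\wt A_\Theta$ is not semibounded from below. The only point requiring care here is that the failure must persist as $x\to-\infty$, and this is precisely where unboundedness enters: at each scale $|x|$ one must produce a vector on which $(R^*)_{\rm s}$ amplifies by more than about $\sqrt{|x|/\epsilon}$, which is available exactly when $(R^*)_{\rm s}$ is unbounded.
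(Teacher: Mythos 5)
Your proposal is correct and follows essentially the same route as the paper's own proof: both directions are run through the equivalence \eqref{semieq} for the triplet $\{\wt\cG,\wt\Gamma_0,\wt\Gamma_1\}$ (valid since $\ker\wt\Gamma_0=\wt S_F$), with boundedness of $(R^*)_{\rm s}$ used to show that $\wt M(x)$ tends uniformly to $-\infty$, and unboundedness used to produce, for every $x<0$, a vector in $\wt\cG$ on which the form of $\wt M(x)$ dominates that of $\Theta=-\epsilon I_{\wt\cG}$. The only differences are cosmetic choices of constants and the fact that the paper's part (ii) passes through the intermediate observation $(\wt M(x)h,h)>0$ rather than comparing directly with $-\epsilon\|h\|^2$.
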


\begin{proof}
(i) Assume that $(R^*)_{\rm s}$ is bounded.
It suffices to prove that if $\Theta$ is semibounded from below,
then so is the selfadjoint extension $\wt S_\Theta$.
Recall from \eqref{gw}  that $h=\{h_1,h_2\}\in\wt\cG$ is equivalent to
$\{h_2,h_1\}\in -(R^*)_{\rm s}$;  thus, by assumption,
 $\|h_1\|\leq M \|h_2\|$ for some $0\leq M<\infty$.
Now consider the values of the Weyl function $\wt M(x)$ for $x<0$
and $h\in\wt\cG$; it follows from \eqref{weylw} that
\begin{equation}\label{weylw4}
  \left( \wt M(x)  \begin{pmatrix} h_{1} \\ h_{2} \end{pmatrix},
  \begin{pmatrix} h_{1} \\ h_{2} \end{pmatrix} \right)
   =  -\dfrac{1}{x} \,(h_{1}, h_{1})+ x \,(h_{2}, h_{2})  
    \leq \left( -\dfrac{M^2}{x}+x\right) \|h_2\|^2.
\end{equation}
Taking $x\leq -M^2$ one has $0< -\frac{M^2}{x}\leq 1$.
Next observe that
\begin{equation}\label{weylw4B}
 \|h\|^2=\|h_1\|^2+\|h_2\|^2\leq (M^2+1)\|h_2\|^2 \quad\Leftrightarrow\quad\|h_2\|^2\geq \dfrac{\|h\|^2}{M^2+1}.
\end{equation}
Now for all $x<\min\,\{-1,-M^2\}$ one has $-\frac{M^2}{x}+x\leq 1+x<0$ and \eqref{weylw4}, \eqref{weylw4B} give the estimate
\[
  \left( \wt M(x)  \begin{pmatrix} h_{1} \\ h_{2} \end{pmatrix},
  \begin{pmatrix} h_{1} \\ h_{2} \end{pmatrix} \right)
  \leq \left(1+x\right) \|h_2\|^2
  \leq \dfrac{1+x}{M^2+1}\, \|h\|^2.
\]
Now assume that $\Theta$ is semibounded from below
with lower bound $\gamma\in\dR$.  Then observe that
\[
 x<\min\,\{-1,-M^2\} \quad \mbox{and} \quad \frac{1+x}{M^2+1}< \gamma
 \quad \Rightarrow \quad \wt M(x)\leq \gamma\, I \leq \Theta,
\]
 which, according to \eqref{semieq},
leads to $x \leq \wt A_\Theta$. Thus, the selfadjoint extension
$\wt A_\Theta$ is bounded from below and this proves the statement.

(ii) Assume that $(R^*)_{\rm s}$ is an unbounded operator.
 Then for each $n\in\dN$ there exist nontrivial
elements  $\{h_{2,n},h_{1,n}\}\in -(R^*)_{\rm s}$
such that $\|h_{1,n}\| \geq  c_n \|h_{2,n}\|$, where $c_n\geq n$.
Now it follows from \eqref{weylw} that for all $x<0$,
\[
\begin{split}
 \left( \wt M(x)  \begin{pmatrix} h_{1,n} \\ h_{2,n} \end{pmatrix},
  \begin{pmatrix} h_{1,n} \\ h_{2,n} \end{pmatrix} \right)
  & =  -\dfrac{1}{x} \,(h_{1,n}, h_{1,n})+ x \,(h_{2,n}, h_{2,n}) \\
  & \geq  \left( -\dfrac{c_n^2}{x}+x\right) \|h_{2,n}\|^2.
  \end{split}
\]
Let $x<0$ be fixed and select $n>|x|$.
Then $-\frac{c_n^2}{x}+x>0$
and thus for every $x<0$ there exists a nontrivial element $h\in\wt\cG$
such that $(\wt M(x)h,h)>0$.
Consider a bounded selfadjoint operator $\Theta$
in $\wt\cG$ and assume that $\wt A_\Theta$
has a lower bound $x<0$.
Combining the previous reasoning with \eqref{semieq}
shows that for some $h\in\wt\cG$
\begin{equation}\label{notdefneg}
  (\Theta h,h)\geq (\wt M(x)h,h)> 0.
\end{equation}
Now take $\Theta=-\delta I_{\wt\cG}$ with $\delta>0$.
Since $\Theta$ is a negative definitive
operator in $\wt\cG$ one concludes from \eqref{notdefneg}
that the corresponding selfadjoint extension
$\wt A_\Theta$ cannot be semibounded from below.
Moreover, here $\|\Theta\|=\delta$ can be made arbitrary small.
This completes the proof.
\end{proof}

The alternative in Theorem \ref{propsemibd} can be stated
 in terms of $R$, instead of its adjoint,
since $(R^*)_{\rm s}$ is a bounded operator precisely
when $\dom R^*$ is closed, which is equivalent to $\dom R$ being closed.
Thus, the operator part $(R^*)_{\rm s}$ of $R^*$ is a bounded
(unbounded) operator if and only if the operator part
$R_{\rm s}$ of $R$ is a bounded (unbounded) operator.
The above proof shows that in case (i)
the upper bound of $\wt M(x)$ tends to $-\infty$ as $x\downarrow -\infty$,
or, in the terminology of \cite{DM1991,DM1995},
$\wt M(x)$ tends uniformly to $-\infty$,
which is the criterion proved therein for the equivalence:
$\Theta$ is semibounded $\Leftrightarrow$ $\wt A_\Theta$ is semibounded.
It is clear from the proof of (ii) that the upper bound, say $\nu_x$,
of $\wt M(x)$ satisfies $\nu_x > 0$,
while $\wt M(x)$ has the weak limit property in \eqref{weylw2}.\\

It is also possible to describe all nonnegative extensions
of the symmetric extension $\wt S$ of $S$ by
a   treatment similar to the one  in Section \ref{Fried}.
 It follows from \eqref{friedforw} and \eqref{kreinforw}
that $\wt S_0=\wt S_F \cap \wt S_K$ is given by
\[
 \wt S_0
 =( \cdom R \oplus \{0\} )^\top
 \times  ( \mul R^* \oplus \cran R )^\top,
\]
and its adjoint is given by
\[
(\wt S_0)^*= ( \cdom R \oplus \ker R^*)^\top
 \times  ( \mul  R^* \oplus \sH_2 )^\top.
\]
One sees immediately that for all $\lambda \in \dC$
\[
\wt \cG_0=\sN_{\lambda}((\wt S_0)^*)= (\{0\} \oplus \ker R^*)^\top
  \subset \left\{ \begin{pmatrix} h_{1}  \\ h_{2} \end{pmatrix} :
 \,\{h_{2},- h_{1}\} \in (R^*)_{\rm s} \,\right\}=\wt \cG.
\]
The details are left to the reader.

\end{document}